\documentclass[10pt,reqno]{amsart}
\usepackage{amssymb,version,graphicx,fancybox,pifont}
\usepackage[numbers,sort,compress]{natbib}
\usepackage{multirow}
\usepackage{url,hyperref}
\usepackage{mathrsfs}
\usepackage{amsmath}
\usepackage{subfigure}
\usepackage{enumerate}
\usepackage{booktabs}
\usepackage[table, dvipsnames]{xcolor}
\usepackage{braket}
\usepackage{cleveref}
\usepackage{bm}
\catcode`\@=11
\@addtoreset{equation}{section}   

\@addtoreset{table}{section}   
\renewcommand\thefigure{\thesection.\@arabic\c@figure}
\@addtoreset{figure}{section}   

\renewcommand\thetable{\thesection.\@arabic\c@table}

 \newcommand{\new}{\newcommand*}
 \new{\rnew}{\renewcommand*}
  \new{\newe}{\newenvironment*}
 \new{\stl}{\setlength}
 \stl{\arraycolsep}{0.5mm}

\newtheorem{thm}{\bf Theorem}
\newtheorem{col}{Corollary}[section]
\newtheorem{prop}{Proposition}[section]

\newenvironment{theorem}{\begin{thm}} {\end{thm}}
\newtheorem{lmm}{\bf Lemma}

\newenvironment{lemma}{\begin{lmm}}{\end{lmm}}
\newtheorem{assumption}{Assumption}
\newenvironment{proposition}{\begin{prop}}{\end{prop}}
\newenvironment{corollary}{\begin{col}}{\end{col}}
\theoremstyle{remark}

\theoremstyle{definition}

\usepackage{multirow}

\newcommand {\bgeq}[1]{\begin{equation}\label{#1}}
\newcommand \edeq {\end{equation}}
\newcommand \bgth {\begin{theorem}\label}
\newcommand \edth {\end{theorem}}
\newcommand \bglm {\begin{lemma}\label}
\newcommand \edlm {\end{lemma}}
\newcommand {\bgar}[1]{\begin{array}{#1}}
\newcommand {\edar}{\end{array}}

\newtheorem{remark}{Remark}

\renewcommand \dfrac{\displaystyle\frac}

\usepackage{algorithm, algorithmic}

\let\originalparagraph\paragraph
\renewcommand{\paragraph}[1]{\originalparagraph{\textbf{#1}}}

\title[CARE-SAV for Gradient Flows]{CARE-SAV: A Conditioning-Aware Random-Feature Framework for Energy-Stable Simulation of Gradient Flows}
\author[B. Hu and Z. Li]{}

\subjclass{Primary: 65M12, 65M60, 65D12, 65F35.}
\keywords{CARE-SAV, gradient flows, scalar auxiliary variable methods, radial basis functions, CARE Galerkin space.}
\thanks{$^\dag$Corresponding author  email:  zxli@shnu.edu.cn. \\ \indent The work was partially supported by the National Natural Science Foundation of China (No.12271366, 12571391, 11871043).}

\begin{document} \maketitle
\centerline{Bingcheng Hu, Zhaoxiang Li$^\dag$}
\centerline{Department of  Mathematics, Shanghai Normal University, Shanghai,
200234, P.R. China}


%

\begin{abstract}
Gradient-flow models are characterized by an intrinsic energy-dissipation structure, and faithfully preserving this structure at the discrete level is important for stable and reliable long-time simulation. To this end, we develop a Conditioning-Aware Representation Enhancement with Scalar Auxiliary Variable (CARE-SAV) framework, which constructs a compact spatial approximation space from flexible candidate features and evolves the gradient-flow dynamics directly within this space. The resulting fully discrete scheme preserves the discrete energy-dissipation law while providing a flexible alternative to conventional prescribed spatial discretizations. Rigorous analysis establishes the approximation capability, solvability, stability and convergence of the proposed method. Numerical experiments on representative gradient-flow problems demonstrate its accuracy, robustness and computational efficiency. We believe that CARE-SAV could provide a simple, flexible, and computationally efficient paradigm for structure-preserving discretization of gradient-flow problems.
\end{abstract}

\section{Introduction}

\subsection{Background and Motivation}
Gradient flows arise broadly in phase-field modeling, interfacial dynamics, materials science and many other dissipative systems \cite{SHENDecoupled2014,SHEN2015617,YANG20171116,Celledoni2018SISC}. Concretely, a state variable $\phi$ evolves in the direction of decreasing free energy $E$ according to
\begin{equation}
  \partial_t\phi=\mathcal G\frac{\delta E}{\delta\phi},
  \qquad
  \frac{\mathrm d}{\mathrm dt}E(\phi)=\left(\frac{\delta E}{\delta\phi},\mathcal G\frac{\delta E}{\delta\phi}\right)\le0,
  \label{eq:intro-gradient-flow}
\end{equation}
where $\mathcal G$ is a nonpositive mobility operator. Thus, the free energy is a Lyapunov functional and its decay describes irreversible relaxation toward equilibrium \cite{SHEN2018407,Tang2019SISC}. Reproducing this structure is essential for stable and physically consistent long-time simulation \cite{Shen-Phase-Field2010,GOMEZ20115310,Li2024SISC}, and is a central objective of structure-preserving computation.

Existing structure-preserving schemes for gradient flows are commonly formulated within prescribed spatial discretizations, including finite difference grids \cite{WiseFDM2009}, finite element spaces \cite{GaoFEM2018} and spectral approximation spaces \cite{LiShen2020SAV}. In these formulations, however, the spatial approximation space is prescribed a priori and remains tied to the underlying discretization. While such a setting provides a natural basis for variational formulation and numerical analysis, it also restricts the spatial representation to a predetermined class of approximation spaces. A natural question is therefore whether one can introduce a more flexible spatial representation without sacrificing the structural properties that underpin stable gradient-flow discretizations.

Achieving such flexibility is nontrivial, since approximation capability alone does not ensure compatibility with the structural requirements of gradient-flow discretizations. Motivated by this challenge, we develop a Conditioning-Aware Representation Enhancement with Scalar Auxiliary Variable (CARE-SAV) framework. This feature-generated Galerkin framework constructs a compact spatial approximation space from a redundant family of candidate functions while retaining the underlying energy-dissipation structure. It provides a flexible alternative to prescribed spatial discretizations without abandoning the structure-preserving principles central to simulation of gradient flows.

\subsection{Related Work}

\paragraph{Energy-stable schemes for gradient flows}
Energy-stable discretizations for gradient flows have been extensively studied over the past decades. Classical approaches include convex-splitting \cite{Eyre_1998,BaskaranConvex2013} and stabilized semi-implicit schemes \cite{Yang2009AllenCahn,LiSemiImplicitFourierSpectral,WangYu2018CahnHilliard}, while more recent developments such as invariant energy quadratization (IEQ) \cite{YangIEQ2018SISC,YangIEQ2017MMAMAS,YANG2017104JCP,ZhaoIEQ2017IJNME} and scalar auxiliary variable (SAV) \cite{SHEN2018407,SHENXU20182895,ChengMSAV2018SISC,HuangSAV2020SISC,HOU2019307} methods enable efficient linear schemes with unconditional energy stability. These approaches have established a powerful framework for preserving the dissipative structure of gradient flows. However, in fully discrete realizations, the spatial approximation is typically introduced through a separately prescribed discretization and therefore remains tied to a predetermined mesh, basis, or approximation space \cite{GongSAV2018SISC,HanBrylevYang2017}. This motivates the search for more flexible spatial representations that can retain the same structure-preserving properties.

\paragraph{Mesh-free and representation-based PDE solvers}
Mesh-free and representation-based PDE solvers offer an alternative route to spatial approximation by representing the solution in flexible families of functions rather than conventional mesh-dependent spaces \cite{Belytschko1994IJNME,chen2022JML,SIRIGNANO20181339}. Random-feature \cite{CHI2024116719,SHANG2023107518,SUN2024115830,chen2023}, extreme-learning \cite{CALABRO2021,DWIVEDI202096,DONG2021114129,QUAN2023697} and other related neural representations \cite{RAISSI2019686,EYu2018DeepRitz,KHARAZMI2021113547} have demonstrated the potential of flexible function classes for constructing accurate approximations to PDE solutions. Such approaches provide substantial freedom in the choice of spatial representation and can yield compact finite-dimensional approximations without relying on problem-specific basis functions. However, their design is typically driven by approximation accuracy, residual minimization or computational efficiency, while the discrete variational structure of the underlying PDE is not generally imposed as a primary requirement \cite{tang2026energy,wang2026pinns}. For gradient flows, this leaves open the question of how a feature-generated approximation space can be endowed with the structure required to support a rigorous energy-stable discretization.

\subsection{Main Contributions}

To address the gap, CARE-SAV is developed as a variational framework for gradient-flow computation that integrates flexible spatial approximation with structure-preserving discretization at the formulation level (see Fig.~\ref{fig:main_figure}). This perspective enables the spatial representation to remain adaptable while retaining the analytical properties required for stable long-time evolution. The main contributions of this work are summarized as follows:
\begin{itemize}
  \item A compact CARE space is constructed from flexible candidate features for spatial discretization.
  \item A fully discrete CARE-SAV scheme is developed to retain the dissipative structure of gradient flows.
  \item Solvability, stability, conservation and convergence properties are rigorously established.
  \item Representative experiments demonstrate the accuracy, robustness and efficiency of the proposed framework.
\end{itemize}

\begin{figure}[htbp]
  \centering
  \includegraphics[width=1\textwidth]{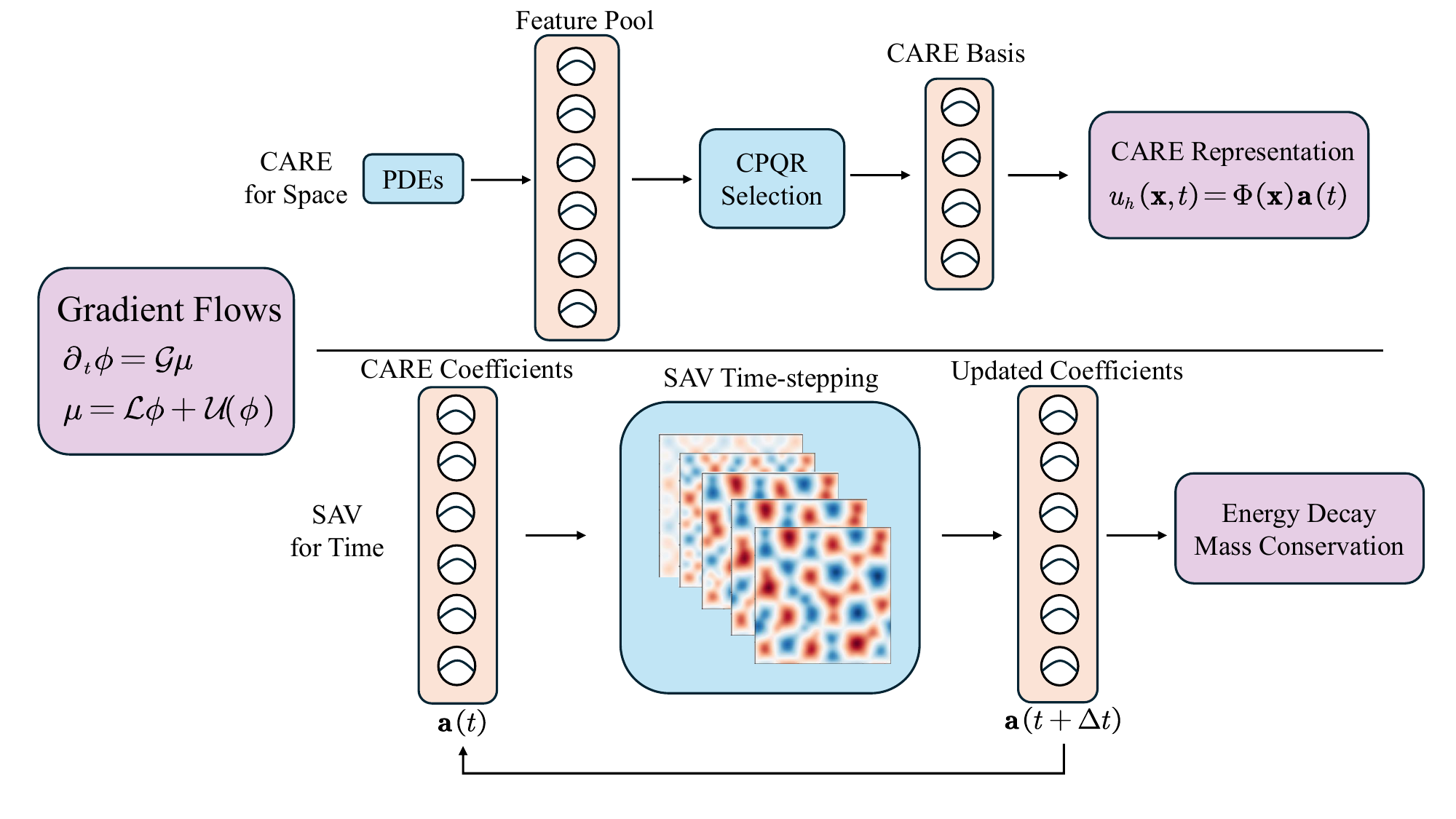}
  \vspace{-0.4in}
  \caption{CARE-SAV Framework}
  \label{fig:main_figure}
\end{figure}

\subsection{Organization}
This paper is organized as follows: Section 2 introduces the gradient-flow formulation and reviews the SAV framework. Section 3 presents the conditioning-aware representation enhancement and the construction of the CARE space. Section 4 develops the fully discrete CARE-SAV scheme and its efficient solution procedure. Section 5 establishes the solvability, energy stability, conditioning and error estimates of the proposed method. Section 6 reports numerical experiments for representative gradient-flow models and parameter study. Finally, Section 7 concludes the paper and discusses possible future directions.


\section{Gradient-Flow Formulation and the SAV Framework}\label{sec:sav-framework}
In this section, we introduce the general gradient-flow formulation underlying the class of problems considered in this work and briefly review the classical SAV-CN framework. We then derive the corresponding discrete energy law, which provides the structural foundation for the development and stability analysis of the proposed CARE-SAV method.

\subsection{Gradient-Flow Formulation}
We consider a general class of gradient-flow problems posed on a spatial domain $\Omega\subset\mathbb{R}^d$. Periodic or homogeneous Neumann boundary conditions are assumed so that the boundary terms arising from integration by parts vanish. Throughout this section, we denote the $L^2$ inner product and norm by $(u,v) = \int_\Omega u v \mathrm{d} \boldsymbol{x} $ and $\|u\| = (u,u)^{1/2}$ respectively.

The free-energy functional is assumed to admit the decomposition
\begin{equation}
  E(\phi) = \frac{1}{2}(\phi,\mathcal{L}\phi) + E_1(\phi),
\end{equation}
where $\mathcal{L}$ is a self-adjoint nonnegative linear operator and $E_1(\phi)$ contains the nonlinear contribution. We denote its variational derivative by $\mathcal{U} (\phi) = \frac{\delta E_1}{\delta \phi}$.

The corresponding gradient-flow system can be written as
\begin{equation}
  \partial_t \phi = \mathcal{G} \mu, \quad \mu = \mathcal{L}\phi + \mathcal{U}(\phi),
\end{equation}
where $\mathcal{G}$ is a nonpositive operator satisfying $(\mathcal{G}v,v) \leq 0$. Typical choices include $\mathcal{G} = -I$ for nonconserved gradient flows and $\mathcal{G} = \Delta$ for conserved gradient flows.

For the unforced gradient-flow system, taking the time derivative of the free energy gives
\begin{equation}
  \frac{\mathrm{d}}{\mathrm{d}t} E(\phi) = \left(\frac{\delta E}{\delta \phi},  \partial_t \phi \right) = (\mu, \mathcal{G} \mu) \le 0.
\end{equation}
Hence, in the absence of external forcing, the free energy decreases monotonically and serves as a Lyapunov functional of the system.

To construct a linear and energy-stable temporal discretization, we assume that the nonlinear energy $E_1(\phi)$ is bounded from below and choose a constant $C_0$ such that $E_1(\phi) + C_0 > 0$. This positivity condition permits the introduction of a scalar auxiliary variable and forms the basis of the SAV reformulation.

\subsection{The SAV-CN Scheme and Discrete Energy Law}
Under the assumption $E_1(\phi) + C_0 > 0$, we introduce the scalar auxiliary variable $r(t) = \sqrt{E_1(\phi) + C_0}$. Defining
$$b(\phi) := \frac{\mathcal{U}(\phi)}{\sqrt{E_1(\phi) + C_0}},$$
the chemical potential can be reformulated as
\begin{equation}
  \mu = \mathcal{L}\phi + r b(\phi) .
\end{equation}
This reformulation replaces the nonlinear energy contribution by a scalar variable while preserving equivalence with the original gradient-flow system at the continuous level.

The original gradient-flow problem can therefore be reformulated as the equivalent SAV system
\begin{equation}
  \partial_t \phi = \mathcal{G} \mu, \quad \mu = \mathcal{L}\phi + r b(\phi), \quad \frac{\mathrm{d}r}{\mathrm{d}t} = \frac{1}{2} (b(\phi), \partial_t \phi).
\end{equation}
At the continuous level, the relation $r(t) = \sqrt{E_1(\phi) + C_0}$ is preserved by this system.

Let $t_n=n\Delta t$, where $\Delta t$ denotes the time-step size. To achieve second-order temporal accuracy while retaining a discrete energy-dissipation structure, we employ the Crank-Nicolson discretization \cite{CrankNicolson1996}. For any sequence $\{v_n\}$, define the midpoint approximation and the second-order explicit extrapolation by
\begin{equation}
  \bar{v}^{n+1/2} = \frac{v^{n+1}+v^n}{2}, \quad \bar{v}^{n+1/2} = \frac{3v^n-v^{n-1}}{2}.
\end{equation}
Applying the Crank-Nicolson discretization to the linear terms and evaluating the nonlinear coefficient explicitly at $\bar{\phi} ^ {n+1/2}$, the classical SAV-CN scheme \cite{SHEN2018407} reads

\begin{equation}
\begin{aligned}
& \frac{\phi^{n+1}-\phi^n}{\Delta t} =\mathcal{G}\mu^{n+\frac12}, \quad \mu^{n+\frac12}=\mathcal{L}\phi^{n+\frac12} + r^{n+\frac12} b\left(\bar{\phi}^{\,n+\frac12}\right), \\
\quad
& \frac{r^{n+1}-r^n}{\Delta t} = \frac12
\left(b \left(\bar{\phi}^{\,n+\frac12}\right),
\frac{\phi^{n+1}-\phi^n}{\Delta t}
\right),
\end{aligned}
\end{equation}
where
\begin{equation}
\phi^{n+\frac12} = \frac{\phi^{n+1}+\phi^n}{2}, \quad
r^{n+\frac12} = \frac{r^{n+1}+r^n}{2}.
\end{equation}
The midpoint structure of the Crank-Nicolson method is compatible with the SAV reformulation, while the explicit treatment of $b(\phi)$ yields a linear scheme in which only linear systems need to be solved at each time step.

Associated with the SAV-CN scheme, we define the discrete modified energy by

\begin{equation}
  \widetilde{E}(\phi^n,r^n) = \frac12(\phi^n,\mathcal{L}\phi^n) + (r^n)^2 - C_0.
\end{equation}
The scheme satisfies the discrete energy law
\begin{equation}
  \widetilde{E}^{n+1} - \widetilde{E}^n = \Delta t (\mu^{n+\frac12}, \mathcal{G}\mu^{n+\frac12}) \le 0.
\end{equation}
Therefore, the modified energy is nonincreasing for any time-step size, which establishes the unconditional energy stability of the classical SAV-CN scheme. Consequently, the modified energy is nonincreasing for any time-step size, which establishes the unconditional energy stability of the classical SAV-CN scheme.

The classical SAV-CN framework provides an energy-stable temporal discretization, while the construction of an accurate and well-conditioned spatial approximation remains an independent issue. A direct random-feature representation may contain strongly correlated or numerically redundant basis functions, leading to ill-conditioned discrete systems and degraded accuracy. To address this difficulty, we introduce a conditioning-aware representation enhancement strategy that constructs a compact and numerically stable approximation space from an initially overcomplete set of candidate features.

\section{Conditioning-Aware Representation Enhancement}
In this section, we develop the conditioning-aware representation enhancement strategy CARE. First we introduce the random-feature trial space, then construct a numerically stable basis by identifying and removing redundant feature directions, and finally formulate the Galerkin projection in the resulting CARE space.

\subsection{Random-Feature Trial Space}
Let $\Omega \subset \mathbb{R}^d$ be the computational domain. We first generate an overcomplete collection of $M$ boundary-compatible Gaussian candidate features. For problems with natural boundary conditions, we use the usual Euclidean Gaussian radial basis functions
\begin{equation}
  \varphi_j(\boldsymbol{x}) = \exp\left(-\frac{\|\boldsymbol{x}-\boldsymbol{c}_j\|^2}{2\sigma_j^2}\right), \quad j=1,\cdots,M.
  \label{eq:Euclidean-Gaussian-feature}
\end{equation}
For a periodic problem on the rectangular cell $\Omega=\prod_{\ell=1}^d[a_\ell,a_\ell+L_\ell)$, the Euclidean Gaussian is replaced by its periodized counterpart
\begin{equation}
  \varphi_j^{\mathrm{per}}(\boldsymbol{x})
  =\sum_{\boldsymbol{k}\in\mathbb Z^d}
  \exp\left(-\frac{\|\boldsymbol{x}-\boldsymbol{c}_j+\boldsymbol{k}\odot\boldsymbol L\|^2}{2\sigma_j^2}\right),
  \quad j=1,\cdots,M,
  \label{eq:periodized-Gaussian-feature}
\end{equation}
where $\boldsymbol L=(L_1,\cdots,L_d)$ and $\odot$ denotes componentwise multiplication. Here $\boldsymbol{c}_j \in \Omega$ and $\sigma_j > 0$ denote the randomly selected center and width of the $j$-th feature respectively. The centers are sampled over the computational domain, while the widths are selected from a prescribed interval $[\sigma_{\min}, \sigma_{\max}]$. Thus, for periodic problems, the Gaussian candidate features are periodized so that the resulting CARE space is compatible with the prescribed periodic boundary conditions.

The candidate functions span the initial random-feature trial space
\begin{equation}
  V_{M}^{\text{RF}} = \mathrm{span}\{\varphi_1,\cdots,\varphi_M\}.
\end{equation}
In the periodic case, we use $\varphi_j$ to denote $\varphi_j^{\mathrm{per}}$. If $X_{\mathrm{per}}$ denotes the periodic energy space associated with the model, then $V_M^{\mathrm{RF}}\subset X_{\mathrm{per}}$. Moreover, since CARE selects only linear combinations of the candidate features, we also have $V_K^{\mathrm{CARE}}\subset X_{\mathrm{per}}$.
Here, $M$ denotes the number of candidate features. Since the features are generated randomly, $V_{M}^{\text{RF}}$ may contain strongly correlated or numerically redundant directions, and its effective numerical rank can be considerably smaller than $M$. The approximate solution is represented in the random-feature trial space as
\begin{equation}
  \phi(\boldsymbol{x},t) = \sum_{j=1}^{M} a_j(t) \varphi_j(\boldsymbol{x}) = \boldsymbol{\varphi}(\boldsymbol{x})^{T} \boldsymbol{a}(t),
\end{equation}
where $\boldsymbol{\varphi}(\boldsymbol{x}) = [\varphi_1(\boldsymbol{x}),\cdots,\varphi_M(\boldsymbol{x})]^{T}$ and $\boldsymbol{a}(t) = [a_1(t),\cdots,a_M(t)]^{T}$. The feature centers and widths are fixed after initialization, while only the coefficient vector $\boldsymbol{a}(t)$ evolves in time.

Let $\{\boldsymbol{x}_q,w_q\}_{q=1}^{N}$ denote the spatial quadrature points and weights. The candidate features are evaluated at these points to form the feature matrix $\Phi \in \mathbb{R}^{N \times M}$, whose entries are $\Phi_{qj} = \varphi_j(\boldsymbol{x}_q)$. For a spatial differential operator $\mathcal{D}$, the corresponding operator-evaluation matrix is defined by $(\mathcal{D}\Phi)_{qj} = \mathcal{D}\varphi_j(\boldsymbol{x}_q)$. These matrices provide discrete representations of the trial functions and their spatial derivatives and will be used to construct the conditioning-aware CARE basis.

Since the candidate feature set may contain strongly correlated or numerically redundant directions, the matrices introduced above can be severely ill-conditioned. To remedy this issue, a conditioning-aware basis is constructed in the following subsection by retaining only the numerically independent feature directions.

\subsection{Conditioning-Aware Basis Construction}
Let $\widehat{\Phi}  = W^{1/2} \Phi$ denote the weighted feature matrix. The CARE construction begins by identifying the numerical column space of $\widehat{\Phi}$ with respect to the discrete $L^2(\Omega)$ inner product. Since the candidate feature matrix may be rank deficient or severely ill-conditioned, retaining all $M$ feature directions is unnecessary and numerically undesirable. We therefore replace $V_{M}^{\text{RF}}$ by a reduced subspace of dimension $K \leq M$, spanned by numerically independent directions. The quadrature weighting ensures that this reduction is consistent with the discrete $L^2(\Omega)$ geometry.

To determine the numerical rank of $\widehat{\Phi}$, we employ a column-pivoted rank-revealing QR (CPQR) factorization
$$
  \widehat{\Phi} \mathsf{P} = Q \mathsf{R},
$$
where $\mathsf{P} \in \mathbb{R}^{M \times M}$ is a permutation matrix, $Q \in \mathbb{R}^{N \times M}$ has orthonormal columns, and $\mathsf{R} \in \mathbb{R}^{M \times M}$ is upper triangular. The column permutation orders the candidate features according to their incremental contribution to the numerical column space. Consequently, the leading columns correspond to the dominant independent directions, whereas nearly dependent or redundant features are relegated to the trailing part of the factorization.

The effective dimension $K$ is determined from the diagonal entries of $\mathsf{R}$ using a prescribed tolerance $\tau_{QR}>0$. Specifically, we set
\begin{equation}
  K=\max\left\{k \mid \frac{\left|\mathsf{R}_{kk}\right|}{\left|\mathsf{R}_{11}\right|}\geq\tau_{\mathrm{QR}} \right\}.
\end{equation}
The leading $K$ pivoted columns are therefore retained, while the remaining directions are regarded as numerically dependent and discarded. The effective dimension $K$ is determined adaptively by the rank-revealing procedure. A larger value of $\tau_{QR}$ yields a more aggressive reduction, whereas a smaller value retains a larger portion of the candidate feature space.

Let $\mathsf{P}_K \in \mathbb{R}^{M \times K}$ contain the first $K$ columns of $\mathsf{P}$, and let $Q_K \in \mathbb{R}^{N \times K}$ and $\mathsf{R}_K \in \mathbb{R}^{K \times K}$ denote the corresponding leading factors, so that
$$
  W^{1/2} \Phi \mathsf{P}_K = Q_K \mathsf{R}_K.
$$
Since $\mathsf{R}_K$ is nonsingular, define the transformation matrix $\mathsf T = \mathsf{P}_K \mathsf{R}_K^{-1}$ and the CARE basis matrix $\Psi = \Phi \mathsf T$. It then follows that
$$
  W^{1/2} \Psi = W^{1/2} \Phi \mathsf T = Q_K, \quad \Psi ^{T} W \Psi = I_K.
$$
Hence, the resulting CARE basis is orthonormal with respect to the discrete $L^2(\Omega)$ inner product. The resulting conditioning-aware approximation space is defined by
\begin{equation}
  V_K^{\text{CARE}} = \mathrm{span}\{\psi_1,\cdots,\psi_K\} \subset V_M^{\text{RF}},
\end{equation}
where $\{\psi_j\}_{j=1}^K$ are the basis functions represented by the columns of $\Psi$. The CARE basis is constructed once prior to time integration and is subsequently fixed throughout the simulation. The resulting reduced space has an identity discrete mass matrix and is employed as the trial space in the Galerkin discretization.

\subsection{Galerkin Projection in the CARE Space}
We seek approximations $\phi_K , \mu_K \in V_K^{\text{CARE}}$ of the form
$$
  \phi _K(\boldsymbol{x},t) = \sum_{j=1}^{K} c_j(t) \psi_j(\boldsymbol{x}) = \boldsymbol{\psi}(\boldsymbol{x})^{T} \boldsymbol{c}(t), \quad \mu_K(\boldsymbol{x},t) = \sum_{j=1}^{K} d_j(t) \psi_j(\boldsymbol{x}) = \boldsymbol{\psi}(\boldsymbol{x})^{T} \boldsymbol{d}(t),
$$
where $\boldsymbol{c}(t) = [c_1(t), \cdots, c_K(t)]^{T}$ and $\boldsymbol{d}(t) = [d_1(t), \cdots, d_K(t)]^{T}$ are the coefficient vectors. The Galerkin formulation is obtained by imposing orthogonality of the residuals to $V_K^{\text{CARE}}$. Using the quadrature-based inner product
$$
  (u,v)_Q = \sum_{q=1}^{N} w_q u(\boldsymbol{x}_q) v(\boldsymbol{x}_q),
$$
the CARE Galerkin formulation seeks $\phi _ K ,\mu _K \in V_K^{\text{CARE}}$ such that $ \forall v_K \in V_K^{\text{CARE}}$, we have
\begin{equation}
  (\partial_t\phi_K,v_K)_Q=(\mathcal{G}\mu_K,v_K)_Q,\quad(\mu_K,v_K)_Q =(\mathcal{L}\phi_K,v_K)_Q + (\mathcal{U}(\phi_K),v_K)_Q.
\end{equation}
The discrete mass matrix is the identity because the CARE basis is orthonormal under $(\cdot,\cdot)_Q$.

Define the reduced operator matrices and nonlinear vector by
$$
  (\boldsymbol{G}_K)_{ij}=(\mathcal{G}\psi_j,\psi_i)_Q,\quad(\boldsymbol{L}_K)_{ij}=(\mathcal{L}\psi_j,\psi_i)_Q,\quad[\boldsymbol{U}_K(\boldsymbol{c})]_i=(\mathcal{U}(\phi_K),\psi_i)_Q.
$$
Using the discrete orthonormality of the CARE basis, the Galerkin system takes the coefficient form
$$
  \dot{\boldsymbol{c}}=\boldsymbol{G}_K\boldsymbol{d},\quad\boldsymbol{d}=\boldsymbol{L}_K\boldsymbol{c}+\boldsymbol{U}_K(\boldsymbol{c}).
$$
This reduced system provides the spatial semidiscretization employed in the CARE-SAV formulation.

\section{The CARE-SAV Method}
\label{sec:CARE-SAV}
In this section, we combine the CARE Galerkin approximation with the SAV framework introduced in Section~\ref{sec:sav-framework}. We first derive the reduced spatially semidiscrete system, then present its fully discrete Crank-Nicolson formulation, and finally describe an efficient solution procedure.

\subsection{CARE-SAV Spatial Semidiscretization}
Applying the SAV reformulation to the CARE Galerkin system derived in the preceding section gives the spatially semidiscrete CARE-SAV formulation. For a coefficient vector $\boldsymbol{c} \in \mathbb{R}^K$, define the projected SAV vector $\boldsymbol{b}_K(\boldsymbol{c}) \in \mathbb{R}^K$ by
\begin{equation}
  \left[\boldsymbol b_K(\boldsymbol c)\right]_i =\left(\frac{\mathcal U(\phi_K)}{\sqrt{E_{1,Q}(\phi_K)+C_0}},\psi_i\right)_Q,\quad i=1,\cdots, K,
\end{equation}
where $E_{1,Q}(\phi_K)$ denotes the quadrature approximation of the nonlinear energy and $\phi_K$ is determined by $\boldsymbol{c}$. The resulting coefficient system is
\begin{equation}
  \dot{\boldsymbol {c}}=\boldsymbol{G}_K \boldsymbol{d}, \quad
  \boldsymbol{d} = \boldsymbol{L}_K\boldsymbol{c}+r\boldsymbol{b}_K(\boldsymbol{c}), \quad
  \dot{r} =  \frac{1}{2} \boldsymbol{b}_K(\boldsymbol{c})^{T}\dot{\boldsymbol{c}}.
\end{equation}
The initial condition is defined by the discrete $L^2$ projection of $\phi_0$ onto the CARE space. Owing to the discrete orthonormality of the CARE basis, the initial coefficients and auxiliary variable are given by
\begin{equation}
  c_i(0)=(\phi_0,\psi_i)_Q,
  \quad
  i=1,\cdots,K,
  \quad
  r(0) =\sqrt{E_{1,Q}\left(\phi_K(0)\right)+C_0}.
\end{equation}
The corresponding semidiscrete modified energy is defined as
\begin{equation}
\widetilde E_K(t)=\frac{1}{2}
\boldsymbol c(t)^{T}\boldsymbol{L}_K\boldsymbol c(t) +
r(t)^2-C_0.
\end{equation}
Using the symmetry of $\boldsymbol{L}_K$ and the coefficient system above, one obtains
\begin{equation}
  \frac{\mathrm{d}}{\mathrm{d}t}\widetilde E_K(t)= \boldsymbol{d}(t)^{T} \boldsymbol{G}_K\boldsymbol{d}(t) \leq 0.
\end{equation}
Thus, the CARE Galerkin projection retains the energy-dissipation structure of the SAV formulation at the spatially semidiscrete level.

\subsection{Fully Discrete CARE-SAV Scheme}
Using the Crank-Nicolson discretization and the explicit extrapolation introduced in Section~\ref{sec:sav-framework}, we define the coefficient-level midpoint and extrapolated values by
\begin{equation}
  \boldsymbol{c}^{n+\frac{1}{2}}=\frac{\boldsymbol{c}^{n+1}+\boldsymbol{c}^{n}}{2},\quad
  r^{n+\frac{1}{2}} = \frac{r^{n+1}+r^{n}}{2}, \quad
  \overline{\boldsymbol{c}}^{\,n+\frac12}=\frac{3\boldsymbol{c}^{n}-\boldsymbol{c}^{n-1}}{2}.
\end{equation}
The projected nonlinear vector is evaluated explicitly as
\begin{equation}
  \boldsymbol{b}^{\,n+\frac{1}{2}} = \boldsymbol{b}_K\left( \overline{\boldsymbol{c}}^{\,n+\frac{1}{2}} \right).
\end{equation}
For $n \geq 1$, the fully discrete CARE-SAV scheme seeks $\boldsymbol{c}^{n+1}\in \mathbb{R}^K$, $\boldsymbol{d}^{n+\frac{1}{2}}\in \mathbb{R}^K$, and $r^{n+1}\in \mathbb{R}$ satisfying
\begin{equation}
\begin{aligned}
  &\frac{\boldsymbol{c}^{n+1}-\boldsymbol{c}^{n}}{\Delta t}=\boldsymbol{G}_K\boldsymbol{d}^{n+\frac12},
  \\
  &\boldsymbol{d}^{n+\frac12}=\boldsymbol{L}_K\boldsymbol{c}^{n+\frac12}+r^{n+\frac12}\boldsymbol{b}^{\,n+\frac12},
  \\
  &\frac{r^{n+1}-r^{n}}{\Delta t}=\frac{1}{2}\left(\boldsymbol{b}^{\,n+\frac12}\right)^{T}\frac{\boldsymbol{c}^{n+1}-\boldsymbol{c}^{n}}{\Delta t}.
\end{aligned}
\label{eq:fully_discrete_care_sav}
\end{equation}
Since $b^{n+1/2}$ is evaluated explicitly, the resulting system is linear with respect to the unknown variables at $t_{n+1}$.

Specifically, for the first time step, we set
\begin{equation}
  \overline{\boldsymbol{c}}^{\,\frac12}=\boldsymbol{c}^{0},\quad
  \boldsymbol{b}^{\,\frac12} =\boldsymbol{b}_K(\boldsymbol{c}^{0}),
\end{equation}
and apply the same coupled update with $n=0$. The discrete modified energy associated with the scheme is defined by
\begin{equation}
  \widetilde{E}_K^{\,n}=\frac{1}{2}(\boldsymbol{c}^{n})^{T} \boldsymbol{L}_K \boldsymbol{c}^{n} + (r^{n})^2 -C_0.
\end{equation}
The fully discrete scheme retains the SAV energy structure. Its unique solvability and unconditional energy stability are established in Section~\ref{sec:proof}.

\subsection{Efficient Solution Procedure}
The coupled CARE-SAV scheme can be implemented without solving a fully coupled block system. From the auxiliary-variable equation in Eq.~\eqref{eq:fully_discrete_care_sav}, we obtain
\begin{equation}
  r^{n+\frac12} = r^n + \frac14 \left(\boldsymbol b^{\,n+\frac12}\right)^{T} \left(\boldsymbol c^{n+1}-\boldsymbol c^n\right).
\label{eq:r_half_elimination}
\end{equation}
Introduce the scalar quantity
\begin{equation}
  \alpha^{n+\frac{1}{2}} =\left(\boldsymbol {b}^{\,n+\frac{1}{2}}\right)^{T}\left(\boldsymbol c^{n+1}-\boldsymbol c^n\right).
\end{equation}
Substituting Eq.~\eqref{eq:r_half_elimination} into the first two equations of Eq.~\eqref{eq:fully_discrete_care_sav} gives
\begin{equation}
  \boldsymbol{A}_K\boldsymbol{c}^{n+1} = \boldsymbol{g}^n + \frac{\Delta t}{4} \alpha^{n+\frac12} \boldsymbol{G}_K\boldsymbol b^{\,n+\frac12},
\label{eq:reduced_linear_system}
\end{equation}
where
\begin{equation}
\begin{aligned}
&\boldsymbol{A}_K = \boldsymbol{I}_K - \frac{\Delta t}{2}\boldsymbol{G}_K\boldsymbol{L}_K,
\\
&\boldsymbol g^n = \left( \boldsymbol{I}_K + \frac{\Delta t}{2}\boldsymbol{G}_K\boldsymbol{L}_K \right)\boldsymbol c^n + \Delta t\,r^n \boldsymbol{G}_K\boldsymbol b^{\,n+\frac12}.
\end{aligned}
\label{eq:efficient_solver_definitions}
\end{equation}
We first compute $\boldsymbol{c}_{*}^{n+1}$and $\boldsymbol{q}^{n+1/2}$ from two linear systems with the same coefficient matrix below
\begin{equation}
  \boldsymbol{A}_K\boldsymbol c_*^{\,n+1} = \boldsymbol g^n, \quad
  \boldsymbol{A}_K\boldsymbol q^{\,n+\frac12}=\boldsymbol{G}_K\boldsymbol b^{\,n+\frac12}.
\label{eq:two_linear_solves}
\end{equation}
Eq.~\eqref{eq:reduced_linear_system} then implies
\begin{equation}
  \boldsymbol c^{n+1} = \boldsymbol c_*^{\,n+1} + \frac{\Delta t}{4} \alpha^{n+\frac12}\boldsymbol q^{\,n+\frac12}.
\label{eq:c_solution_decomposition}
\end{equation}
Taking the inner product of Eq.~\eqref{eq:c_solution_decomposition} with $\boldsymbol b^{\,n+1/2}$ determines the scalar coupling coefficient:
\begin{equation}
  \alpha^{n+\frac12} = \frac{\left(\boldsymbol b^{\,n+\frac12}\right)^{T}\left(\boldsymbol c_*^{\,n+1}-\boldsymbol c^n\right)}{1-\dfrac{\Delta t}{4}\left(\boldsymbol b^{\,n+\frac12}\right)^{T}\boldsymbol q^{\,n+\frac12}}.
\label{eq:scalar_alpha}
\end{equation}
The new solution and auxiliary variable are subsequently recovered from
\begin{equation}
\begin{aligned}
  &\boldsymbol c^{n+1} = \boldsymbol c_*^{\,n+1} + \frac{\Delta t}{4} \alpha^{n+\frac12}\boldsymbol q^{\,n+\frac12},\\
  &r^{n+1} = r^n+\frac12\alpha^{n+\frac12}, \\
  &\boldsymbol d^{n+\frac12} = \boldsymbol{L}_K\boldsymbol c^{n+\frac12} + r^{n+\frac12}\boldsymbol b^{\,n+\frac12}.
\end{aligned}
\label{eq:care_sav_solution_recovery}
\end{equation}
For a fixed time step, $\boldsymbol{A}_K$ is independent of $n$. Its factorization can therefore be computed once and reused throughout the simulation. Each time step requires two linear solves with the same $K\times K$ matrix, followed by matrix-vector and scalar operations.

The complete CARE-SAV procedure is summarized in Alg.~\ref{alg:care_sav}. The algorithm requires only two linear solves per time step, and the coefficient matrix is independent of time. The factorization of $\boldsymbol{A}_K$ can therefore be computed once and reused throughout the simulation, yielding a highly efficient implementation.

\begin{algorithm}[t]
\caption{CARE-SAV time integration}
\label{alg:care_sav}
\begin{algorithmic}[1]
\REQUIRE CARE basis, reduced matrices $\boldsymbol{G}_K$ and $\boldsymbol{L}_K$,
         time step $\Delta t$, initial values $\boldsymbol c^0$ and $r^0$,
         and number of time steps $N_t$
\ENSURE $\{\boldsymbol c^n,r^n\}_{n=1}^{N_t}$

\STATE Form $\boldsymbol{A}_K=\boldsymbol{I}_K-\frac{\Delta t}{2}\boldsymbol{G}_K\boldsymbol{L}_K$
\STATE Compute and store a factorization of $\boldsymbol{A}_K$

\FOR{$n=0,\cdots,N_t-1$}
    \IF{$n=0$}
        \STATE $\overline{\boldsymbol c}^{\,1/2}\gets\boldsymbol c^0$
    \ELSE
        \STATE $\overline{\boldsymbol c}^{\,n+1/2}\gets(3\boldsymbol c^n-\boldsymbol c^{n-1})/2$
    \ENDIF

    \STATE $\boldsymbol b^{\,n+1/2}\gets\boldsymbol b_K(\overline{\boldsymbol c}^{\,n+1/2})$

    \STATE $\boldsymbol g^n\gets\left(\boldsymbol{I}_K+\frac{\Delta t}{2}\boldsymbol{G}_K\boldsymbol{L}_K\right)\boldsymbol c^n
    +\Delta t\,r^n\boldsymbol{G}_K\boldsymbol b^{\,n+1/2}$

    \STATE Solve
    $\boldsymbol{A}_K\boldsymbol c_*^{\,n+1}=\boldsymbol g^n$

    \STATE Solve
    $\boldsymbol{A}_K\boldsymbol q^{\,n+1/2}
    =
    \boldsymbol{G}_K\boldsymbol b^{\,n+1/2}$

    \STATE $\displaystyle
    \alpha^{n+1/2}
    \gets
    \frac{
    (\boldsymbol b^{\,n+1/2})^{T}
    (\boldsymbol c_*^{\,n+1}-\boldsymbol c^n)
    }{
    1-
    \frac{\Delta t}{4}
    (\boldsymbol b^{\,n+1/2})^{T}
    \boldsymbol q^{\,n+1/2}
    }$

    \STATE $\displaystyle
    \boldsymbol c^{n+1}
    \gets
    \boldsymbol c_*^{\,n+1}
    +
    \frac{\Delta t}{4}
    \alpha^{n+1/2}
    \boldsymbol q^{\,n+1/2}$

    \STATE $\displaystyle
    r^{n+1}
    \gets
    r^n+\frac12\alpha^{n+1/2}$
\ENDFOR
\end{algorithmic}
\end{algorithm}

\section{Stability, Error and Conditioning Analysis}
\label{sec:proof}
This section presents the main theoretical results for the CARE-SAV method, including unique solvability, unconditional energy stability, and fully discrete convergence. Mass conservation for the Cahn-Hilliard case is also established. Detailed proofs and auxiliary results are provided in the Appendix.

\subsection{Unique Solvability}
We first establish the well-posedness of the fully discrete CARE-SAV scheme. By construction, the CARE basis is discretely orthonormal, we have
\begin{equation}
  \Psi^{T} \boldsymbol{W} \Psi=\boldsymbol{I}_K.
\end{equation}

\begin{assumption}
\label{as1}
  We assume that the reduced operators satisfy
  \begin{equation}
    \boldsymbol{L}_K=\boldsymbol{L}_K^{T}\succeq0,\quad\boldsymbol z^{T}\boldsymbol{G}_K\boldsymbol z\le0 \quad \forall \boldsymbol z\in\mathbb R^K.
  \end{equation}
\end{assumption}
These conditions are the discrete counterparts of the nonnegativity of the linear energy operator and the dissipativity of the mobility operator in the underlying gradient flow. They are naturally inherited under a structure-preserving projection and assembly of the reduced operators, and can be enforced or verified directly at the matrix level.

\begin{theorem}[Unconditional Unique Solvability]
\label{thm:uus}
  Let $\boldsymbol b^{n+1/2}=\boldsymbol b_K(\overline{\boldsymbol c}^{\,n+1/2})$ be fixed by explicit extrapolation. Under Assumption~\ref{as1}, the fully discrete CARE-SAV scheme has a unique solution
  $$
    (\boldsymbol c^{n+1},\boldsymbol d^{n+1/2},r^{n+1}) \in\mathbb R^K\times\mathbb R^K\times\mathbb R
  $$
  for every $\Delta t>0$.
\end{theorem}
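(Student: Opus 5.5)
Since $\boldsymbol b^{n+1/2}$ is a fixed vector, the scheme \eqref{eq:fully_discrete_care_sav} is a square linear system in the $2K+1$ unknowns $(\boldsymbol c^{n+1},\boldsymbol d^{n+1/2},r^{n+1})$. Existence and uniqueness are therefore equivalent to showing that the homogeneous system has only the trivial solution. I would take the difference of two solutions, which solves the scheme with $\boldsymbol c^n=0$ and $r^n=0$. Writing $\boldsymbol c=\boldsymbol c^{n+1}$, $\boldsymbol d=\boldsymbol d^{n+1/2}$ and $r=r^{n+1}$, the homogeneous system reads
\begin{equation*}
\boldsymbol c=\Delta t\,\boldsymbol G_K\boldsymbol d,\qquad \boldsymbol d=\tfrac12\boldsymbol L_K\boldsymbol c+\tfrac12 r\,\boldsymbol b^{n+1/2},\qquad r=\tfrac12(\boldsymbol b^{n+1/2})^{T}\boldsymbol c .
\end{equation*}

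The key step is an energy identity, which mimics the discrete energy law. Take the inner product of the first equation with $\boldsymbol d$. This gives $\boldsymbol d^{T}\boldsymbol c=\Delta t\,\boldsymbol d^{T}\boldsymbol G_K\boldsymbol d\le 0$ by Assumption~\ref{as1}. Next compute $\boldsymbol d^{T}\boldsymbol c$ from the second equation. Using the symmetry of $\boldsymbol L_K$ and the third equation, it equals $\tfrac12\boldsymbol c^{T}\boldsymbol L_K\boldsymbol c+\tfrac12 r\,(\boldsymbol b^{n+1/2})^{T}\boldsymbol c=\tfrac12\boldsymbol c^{T}\boldsymbol L_K\boldsymbol c+r^2\ge 0$. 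Combining the two gives
\begin{equation*}
0\le \tfrac12\boldsymbol c^{T}\boldsymbol L_K\boldsymbol c+r^2=\Delta t\,\boldsymbol d^{T}\boldsymbol G_K\boldsymbol d\le 0,
\end{equation*}
so $r=0$, $\boldsymbol c^{T}\boldsymbol L_K\boldsymbol c=0$ and $\boldsymbol d^{T}\boldsymbol G_K\boldsymbol d=0$.

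The main obstacle is deducing $\boldsymbol c=0$ from these facts. Assumption~\ref{as1} only gives $\boldsymbol G_K$ negative semidefinite, and it need not be symmetric. Since $\boldsymbol L_K\succeq0$, the equality $\boldsymbol c^{T}\boldsymbol L_K\boldsymbol c=0$ forces $\boldsymbol L_K\boldsymbol c=0$. With $r=0$ this makes $\boldsymbol d=0$, and then $\boldsymbol c=\Delta t\,\boldsymbol G_K\boldsymbol d=0$. So symmetric semidefiniteness of $\boldsymbol L_K$ resolves the difficulty, and the semidefinite $\boldsymbol G_K$ is needed only for the sign. Hence the homogeneous system has only the trivial solution, and the scheme is uniquely solvable for every $\Delta t>0$.

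As a consistency check with the efficient solver, I would also verify that $\boldsymbol A_K=\boldsymbol I_K-\tfrac{\Delta t}{2}\boldsymbol G_K\boldsymbol L_K$ is invertible. If $\boldsymbol A_K\boldsymbol x=0$, multiply by $\boldsymbol x^{T}\boldsymbol L_K$. This gives $\boldsymbol x^{T}\boldsymbol L_K\boldsymbol x=\tfrac{\Delta t}{2}(\boldsymbol L_K\boldsymbol x)^{T}\boldsymbol G_K(\boldsymbol L_K\boldsymbol x)\le0$, hence $\boldsymbol L_K\boldsymbol x=0$ and then $\boldsymbol x=0$. The denominator in \eqref{eq:scalar_alpha} is also nonzero. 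Writing $\boldsymbol y=\boldsymbol A_K^{-1}\boldsymbol G_K\boldsymbol b^{n+1/2}$, one expects $(\boldsymbol b^{n+1/2})^{T}\boldsymbol y\le0$ by the same kind of identity, so the denominator is at least $1$. This check is optional, because the homogeneous-system argument already settles uniqueness and hence existence.
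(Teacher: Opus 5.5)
Your proposal is correct and matches the paper's proof essentially step for step. Like the paper, you reduce to the homogeneous system and pair its first equation with $\boldsymbol d$ and its second with $\delta\boldsymbol c$, which gives $0\le\frac12\delta\boldsymbol c^{T}\boldsymbol L_K\delta\boldsymbol c+(\delta r)^2=\Delta t\,\boldsymbol d^{T}\boldsymbol G_K\boldsymbol d\le0$; you then use $\boldsymbol L_K=\boldsymbol L_K^{T}\succeq0$ to get $\boldsymbol L_K\delta\boldsymbol c=0$, $\delta r=0$, $\boldsymbol d=0$ and $\delta\boldsymbol c=0$. Your optional check of $\boldsymbol A_K$ and the scalar denominator is the content of Proposition~\ref{prop:well-posedness}, which the paper proves separately, so its unproved denominator bound does not affect this theorem.
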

\begin{proof}
  The fully discrete scheme is equivalent to the block system
  \begin{equation}
    \label{eq:square_system_pf1}
    \begin{bmatrix}
  \boldsymbol{I}_K/\Delta t&-\boldsymbol{G}_K&0\\
  -\boldsymbol{L}_K/2&\boldsymbol{I}_K&-\boldsymbol b^{n+1/2}/2\\
  -(\boldsymbol b^{n+1/2})^{T}/2&0&1
  \end{bmatrix}
  \begin{bmatrix}
  \boldsymbol c^{n+1}\\
  \boldsymbol d^{n+1/2}\\
  r^{n+1}
  \end{bmatrix}
  =
  \begin{bmatrix}
  \boldsymbol c^n/\Delta t\\
  \boldsymbol{L}_K\boldsymbol c^n/2+\boldsymbol b^{n+1/2}r^n/2\\
  r^n-(\boldsymbol b^{n+1/2})^{T}\boldsymbol c^n/2
  \end{bmatrix}.
  \end{equation}
  It is enough to prove that the associated homogeneous system has only the zero solution. Let $(\delta\boldsymbol c,\boldsymbol d,\delta r)$ satisfy
  \begin{equation}
  \label{eq:matrix_pf1}
    \frac{\delta\boldsymbol c}{\Delta t}=\boldsymbol{G}_K\boldsymbol d,
    \quad
    \boldsymbol d=\frac12\boldsymbol{L}_K\delta\boldsymbol c +\frac12\delta r\,\boldsymbol b,
    \quad
    \delta r=\frac12\boldsymbol b^{T}\delta\boldsymbol c,
  \end{equation}
  where $\boldsymbol b=\boldsymbol b^{n+1/2}$. Taking the inner product of the second equation with $\delta\boldsymbol c$ and using the third equation of Eq.~\eqref{eq:matrix_pf1} gives
  \begin{equation}
  \label{eq:uus_pf1}
    \delta\boldsymbol c^{T}\boldsymbol d =\frac12\delta\boldsymbol c^{T}\boldsymbol{L}_K\delta\boldsymbol c +(\delta r)^2 \ge 0.
  \end{equation}
  On the other hand, the first equation of Eq.~\eqref{eq:matrix_pf1} gives
  \begin{equation}
    \delta\boldsymbol c^{T}\boldsymbol d =\Delta t\,\boldsymbol d^{T}\boldsymbol{G}_K\boldsymbol d\le0.
  \end{equation}
  Consequently all nonnegative terms in Eq.~\eqref{eq:uus_pf1} vanish. Since $\boldsymbol{L}_K\succeq0$, we have $\delta\boldsymbol c^{T}\boldsymbol{L}_K\delta\boldsymbol c=0$, so that $\boldsymbol{L}_K\delta\boldsymbol c=0$, and also $\delta r=0$. The second equation in Eq.~\eqref{eq:matrix_pf1} therefore gives $\boldsymbol d=0$, and the first then gives $\delta\boldsymbol c=0$. Thus the homogeneous kernel is trivial, and Eq.~\eqref{eq:square_system_pf1} is nonsingular.
\end{proof}

Thm.~\ref{thm:uus} establishes the unique solvability of the fully discrete scheme proposed by us. We next show that this well-posedness is retained by the efficient two-solve formulation introduced in Section~\ref{sec:CARE-SAV}.
\begin{proposition}
\label{prop:well-posedness}
  Let
  $$
  \boldsymbol{A}_K=\boldsymbol{I}_K-\frac{\Delta t}{2}\boldsymbol{G}_K\boldsymbol{L}_K,
  \quad
  \boldsymbol{A}_K\boldsymbol q^{n+1/2}=\boldsymbol{G}_K\boldsymbol b^{n+1/2}.
  $$
  Under Assumption~\ref{as1}, $\boldsymbol{A}_K$ is nonsingular and
  \begin{equation}
    \label{eq:main_prop2}
    1-\frac{\Delta t}{4} (\boldsymbol b^{n+1/2})^{T}\boldsymbol q^{n+1/2}\ge1.
  \end{equation}
  Hence the scalar formula in Eq.~\eqref{eq:scalar_alpha} is well defined for every $\Delta t>0$.
\end{proposition}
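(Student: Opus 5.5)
The plan has two steps: show that $\boldsymbol A_K$ is nonsingular, then show that $\boldsymbol b^T\boldsymbol q\le 0$ with $\boldsymbol b=\boldsymbol b^{n+1/2}$ and $\boldsymbol q=\boldsymbol q^{n+1/2}$. The second step immediately gives \eqref{eq:main_prop2}. Both steps use the energy-type pairing from the proof of Thm.~\ref{thm:uus}.

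\textbf{Nonsingularity of $\boldsymbol A_K$.} Suppose $\boldsymbol A_K\boldsymbol z=0$, so that $\boldsymbol z=\frac{\Delta t}{2}\boldsymbol G_K\boldsymbol L_K\boldsymbol z$. Set $\boldsymbol y=\boldsymbol L_K\boldsymbol z$ and pair the equation with $\boldsymbol y$. This gives
$$\boldsymbol z^T\boldsymbol L_K\boldsymbol z=\boldsymbol y^T\boldsymbol z=\frac{\Delta t}{2}\,\boldsymbol y^T\boldsymbol G_K\boldsymbol y\le 0.$$
The left side is nonnegative because $\boldsymbol L_K\succeq0$ and $\boldsymbol L_K=\boldsymbol L_K^T$, so $\boldsymbol z^T\boldsymbol L_K\boldsymbol z=0$. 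Positive semidefiniteness then forces $\boldsymbol L_K\boldsymbol z=0$. Substituting back, $\boldsymbol z=\frac{\Delta t}{2}\boldsymbol G_K\boldsymbol L_K\boldsymbol z=0$, so the kernel is trivial.

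\textbf{Sign of $\boldsymbol b^T\boldsymbol q$.} The obstacle is that $\boldsymbol G_K$ need not be symmetric, so we cannot simply write $\boldsymbol q$ as a quadratic form in $\boldsymbol b$. Instead, introduce the auxiliary vector
$$\boldsymbol w=\boldsymbol L_K\boldsymbol q+\frac{2}{\Delta t}\boldsymbol b.$$
The defining equation $\boldsymbol q-\frac{\Delta t}{2}\boldsymbol G_K\boldsymbol L_K\boldsymbol q=\boldsymbol G_K\boldsymbol b$ can then be rewritten as $\boldsymbol q=\frac{\Delta t}{2}\boldsymbol G_K\boldsymbol w$.

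Pairing this identity with $\boldsymbol w$ gives
$$\boldsymbol w^T\boldsymbol q=\frac{\Delta t}{2}\,\boldsymbol w^T\boldsymbol G_K\boldsymbol w\le0.$$
Expanding the left side,
$$\boldsymbol w^T\boldsymbol q=\boldsymbol q^T\boldsymbol L_K\boldsymbol q+\frac{2}{\Delta t}\,\boldsymbol b^T\boldsymbol q,$$
where the symmetry of $\boldsymbol L_K$ is used. Combining the two displays,
$$\frac{2}{\Delta t}\,\boldsymbol b^T\boldsymbol q\le-\boldsymbol q^T\boldsymbol L_K\boldsymbol q\le0.$$
Hence $-\frac{\Delta t}{4}\boldsymbol b^T\boldsymbol q\ge0$, which is exactly \eqref{eq:main_prop2}. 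In particular the denominator in Eq.~\eqref{eq:scalar_alpha} is at least $1$ for every $\Delta t>0$, so the scalar formula is well defined.

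The only step requiring insight is the choice of $\boldsymbol w$, which converts the non-symmetric solve into a dissipative quadratic form in $\boldsymbol G_K$. Once that substitution is made, the rest is direct verification.
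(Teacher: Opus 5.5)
Your proof is correct and is essentially the paper's argument. The nonsingularity step is identical, and your auxiliary vector $\boldsymbol w$ is just the paper's $\boldsymbol z=\boldsymbol b^{n+1/2}+\frac{\Delta t}{2}\boldsymbol L_K\boldsymbol q^{n+1/2}$ rescaled by $2/\Delta t$, leading to the same identity $\boldsymbol b^{T}\boldsymbol q=\boldsymbol z^{T}\boldsymbol G_K\boldsymbol z-\frac{\Delta t}{2}\boldsymbol q^{T}\boldsymbol L_K\boldsymbol q\le 0$.
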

\begin{proof}
  Proof see \cref{pf:prop:well-posedness}.
\end{proof}

Having established the well-posedness of both the CARE-SAV scheme and its efficient implementation, we next examine its stability properties, first through the discrete energy-dissipation law and subsequently from the perspective of numerical conditioning.

\subsection{Discrete Energy Stability and Conditioning}
We next examine the stability of the CARE-SAV scheme from two complementary perspectives. We first establish the discrete energy-dissipation law inherited from the underlying gradient-flow structure, and then characterize the numerical conditioning of the CARE representation and the resulting reduced solves.

\begin{theorem}[Energy Stability]
\label{thm:energy stability}
Define the modified energy by
  $$
    \widetilde E_K^n=\frac12(\boldsymbol c^n)^{T}\boldsymbol{L}_K\boldsymbol c^n + (r^n)^2-C_0.
  $$
  Then every solution of the fully discrete CARE-SAV scheme satisfies
  \begin{equation}
  \label{eq:main_pf2}
    \widetilde E_K^{n+1}-\widetilde E_K^n =\Delta t\,(\boldsymbol d^{n+1/2})^{T} \boldsymbol{G}_K\boldsymbol d^{n+1/2} \le 0.
  \end{equation}
 Consequently, the scheme is unconditionally energy stable with respect to the quadrature-based modified SAV energy.
\end{theorem}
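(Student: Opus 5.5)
The plan is the standard SAV energy argument, carried out at the coefficient level. Take the inner product of the second equation of Eq.~\eqref{eq:fully_discrete_care_sav} with the increment $\boldsymbol c^{n+1}-\boldsymbol c^n$, and compare it with the first equation tested against $\boldsymbol d^{n+1/2}$. We may take the solution to exist by Thm.~\ref{thm:uus}, although the identity holds for any solution.

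\textbf{Step 1 (dissipation side).} Multiply the first equation of Eq.~\eqref{eq:fully_discrete_care_sav} by $\Delta t\,(\boldsymbol d^{n+1/2})^{T}$ to obtain
\begin{equation*}
(\boldsymbol d^{n+1/2})^{T}(\boldsymbol c^{n+1}-\boldsymbol c^n)=\Delta t\,(\boldsymbol d^{n+1/2})^{T}\boldsymbol G_K\boldsymbol d^{n+1/2}.
\end{equation*}

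\textbf{Step 2 (energy side).} Take the inner product of the second equation with $\boldsymbol c^{n+1}-\boldsymbol c^n$. Since $\boldsymbol L_K$ is symmetric, the quadratic term gives
\begin{equation*}
(\boldsymbol c^{n+1}-\boldsymbol c^n)^{T}\boldsymbol L_K\boldsymbol c^{n+1/2}=\tfrac12(\boldsymbol c^{n+1})^{T}\boldsymbol L_K\boldsymbol c^{n+1}-\tfrac12(\boldsymbol c^{n})^{T}\boldsymbol L_K\boldsymbol c^{n}.
\end{equation*}
This is the only place symmetry enters; the cross terms cancel because $\boldsymbol L_K=\boldsymbol L_K^{T}$.

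For the nonlinear term, the third equation gives $(\boldsymbol b^{n+1/2})^{T}(\boldsymbol c^{n+1}-\boldsymbol c^n)=2(r^{n+1}-r^n)$. Hence
\begin{equation*}
r^{n+1/2}(\boldsymbol b^{n+1/2})^{T}(\boldsymbol c^{n+1}-\boldsymbol c^n)=(r^{n+1}+r^n)(r^{n+1}-r^n)=(r^{n+1})^2-(r^n)^2.
\end{equation*}
Summing the two contributions, and noting that the constant $C_0$ cancels in the difference, yields $\widetilde E_K^{n+1}-\widetilde E_K^n=(\boldsymbol d^{n+1/2})^{T}(\boldsymbol c^{n+1}-\boldsymbol c^n)$.

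\textbf{Step 3.} Equating this with Step 1 gives the identity in Eq.~\eqref{eq:main_pf2}. The sign condition on $\boldsymbol G_K$ in Assumption~\ref{as1} then gives $\le0$. No restriction on $\Delta t$ appears, which is the unconditional stability claim.

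I expect no genuine obstacle. The one point to check is that $\boldsymbol b^{n+1/2}$ is the same explicit vector in both the second and third equations. This holds for $n\ge1$ and, by the stated initialization, for $n=0$, so the telescoping of the $r$ terms is exact regardless of the extrapolation used.
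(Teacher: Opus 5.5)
Your proposal is correct and follows essentially the same argument as the paper. Like the paper, you test the phase equation with $\Delta t\,\boldsymbol d^{n+1/2}$, test the chemical-potential equation with $\boldsymbol c^{n+1}-\boldsymbol c^n$ using the symmetry of $\boldsymbol L_K$, and use the scalar equation to turn the nonlinear term into $(r^{n+1})^2-(r^n)^2$; you also make explicit the appeal to Assumption~\ref{as1} for the sign, which the paper leaves implicit.
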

\begin{proof}
  Taking the inner product of the phase equation with $\Delta t\,\boldsymbol d^{n+1/2}$ gives
  \begin{equation}
  \label{eq:pf2_1}
    (\boldsymbol c^{n+1}-\boldsymbol c^n)^{T}\boldsymbol d^{n+1/2}=\Delta t\,(\boldsymbol d^{n+1/2})^{T} \boldsymbol{G}_K\boldsymbol d^{n+1/2}.
  \end{equation}
  On the other hand, taking the inner product of the chemical-potential equation with $\boldsymbol c^{n+1}-\boldsymbol c^n$, and using the symmetry of $\boldsymbol{L}_K$, yields
  \begin{equation}
  \label{eq:pf2_2}
    \begin{aligned}
    (\boldsymbol c^{n+1}-\boldsymbol c^n)^{T}\boldsymbol d^{n+1/2}={}&\frac12\left[(\boldsymbol c^{n+1})^{T}\boldsymbol{L}_K\boldsymbol c^{n+1}-(\boldsymbol c^n)^{T}\boldsymbol{L}_K\boldsymbol c^n\right]\\
    &+r^{n+1/2}(\boldsymbol b^{n+1/2})^{T}(\boldsymbol c^{n+1}-\boldsymbol c^n).
    \end{aligned}
  \end{equation}
  Furthermore, multiplying the scalar equation by $2\Delta t\,r^{n+1/2}$ yields
  \begin{equation}
  \label{eq:pf2_3}
    (r^{n+1})^2-(r^n)^2=r^{n+1/2}(\boldsymbol b^{n+1/2})^{T}(\boldsymbol c^{n+1}-\boldsymbol c^n).
  \end{equation}
  Substituting Eq.~\eqref{eq:pf2_3} into Eq.~\eqref{eq:pf2_2}, we obtain Eq.~\eqref{eq:pf2_1}, which completes the proof.
\end{proof}

Thm.~\ref{thm:energy stability} shows that the CARE-SAV discretization preserves the energy dissipation structure of the underlying gradient flow without any restriction on the time-step size. We emphasize that the dissipated quantity is the modified SAV energy, which in general does not coincide exactly with the discrete physical energy under the linearly implicit time discretization. For conserved gradient flows, the reduced structure further leads to conservation of the discrete mass.

\begin{corollary}[Conditional Mass Conservation]
  Let $m_i=(1,\psi_i)_Q$ and $\mathcal M_Q^n=\boldsymbol m^{T}\boldsymbol c^n$.
  If $\boldsymbol m^{T}\boldsymbol{G}_K=0$, then for every time step, we have
  \begin{equation}
  \label{eq:main_cor1}
    \mathcal M_Q^{n+1}=\mathcal M_Q^n.
  \end{equation}
\end{corollary}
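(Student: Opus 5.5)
The plan is to left-multiply the phase update in the fully discrete scheme \eqref{eq:fully_discrete_care_sav} by $\boldsymbol m^{T}$. The mass functional $\mathcal M_Q^n=\boldsymbol m^{T}\boldsymbol c^n$ is linear in the coefficient vector, so its increment over one step is
\begin{equation*}
\mathcal M_Q^{n+1}-\mathcal M_Q^n=\boldsymbol m^{T}(\boldsymbol c^{n+1}-\boldsymbol c^n).
\end{equation*}
Only the first equation of the scheme is needed; the chemical-potential and auxiliary-variable equations play no role.

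\textbf{Step 1: identify the mass functional.} With $\phi_K^n=\boldsymbol\psi^{T}\boldsymbol c^n$, the quadrature mass is
\begin{equation*}
(1,\phi_K^n)_Q=\sum_i c_i^n (1,\psi_i)_Q=\boldsymbol m^{T}\boldsymbol c^n=\mathcal M_Q^n .
\end{equation*}
So $\mathcal M_Q^n$ is exactly the discrete mass of the CARE approximation. This is interpretive only and is not needed for the algebra.

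\textbf{Step 2: take one step of the scheme.} Theorem~\ref{thm:uus} guarantees that a solution $(\boldsymbol c^{n+1},\boldsymbol d^{n+1/2},r^{n+1})$ exists for every $\Delta t>0$. The first equation of \eqref{eq:fully_discrete_care_sav} gives $\boldsymbol c^{n+1}-\boldsymbol c^n=\Delta t\,\boldsymbol G_K\boldsymbol d^{n+1/2}$. Multiplying on the left by $\boldsymbol m^{T}$ yields
\begin{equation*}
\mathcal M_Q^{n+1}-\mathcal M_Q^n=\Delta t\,(\boldsymbol m^{T}\boldsymbol G_K)\,\boldsymbol d^{n+1/2}=0,
\end{equation*}
by the hypothesis $\boldsymbol m^{T}\boldsymbol G_K=0$.

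\textbf{Step 3: all steps.} The first step ($n=0$) has the same update form; it differs only in the choice of $\boldsymbol b^{1/2}$, which does not enter the identity above. The equality therefore holds for every $n\ge 0$, and induction gives $\mathcal M_Q^n=\mathcal M_Q^0$.

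There is no real obstacle. The only point needing care is that the hypothesis is a matrix-level condition and is exactly what is used. One could add a remark explaining when it holds: for $\mathcal G=\Delta$ with periodic or Neumann boundary conditions and exact quadrature of $(\Delta\psi_j,1)$, we have $(\boldsymbol m^{T}\boldsymbol G_K)_j=\sum_i(1,\psi_i)_Q(\Delta\psi_j,\psi_i)_Q$. This equals $(\Delta\psi_j,\Pi_K 1)_Q$, where $\Pi_K$ is the discrete $L^2$ projection onto the CARE space. It therefore vanishes when the constant function lies in $V_K^{\text{CARE}}$, or when the quadrature-projected range of $\Delta$ has zero mean. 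This explains why the conservation is stated as conditional. That remark is not required for the proof itself.
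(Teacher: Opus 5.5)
Your proof is correct and is essentially the paper's argument: left-multiply the phase equation $\boldsymbol c^{n+1}-\boldsymbol c^n=\Delta t\,\boldsymbol G_K\boldsymbol d^{n+1/2}$ by $\boldsymbol m^{T}$ and invoke $\boldsymbol m^{T}\boldsymbol G_K=0$. Your optional remark identifying $(\boldsymbol m^{T}\boldsymbol G_K)_j$ with $(\mathcal G\psi_j,\Pi_K 1)_Q$ is also accurate, and it is consistent with the paper's stated sufficient condition that constants lie in the CARE space.
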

\begin{proof}
  Left-multiplication of the phase equation by $\boldsymbol m^{T}$ gives
  $$
  \frac{\mathcal M_Q^{n+1}-\mathcal M_Q^n}{\Delta t} =\boldsymbol m^{T}\boldsymbol{G}_K\boldsymbol d^{n+1/2}=0.
  $$
  Then Eq.~\eqref{eq:main_cor1} holds.
\end{proof}

A sufficient condition for the mass-conservation condition is that the constant function is exactly represented in the CARE space and the Cahn-Hilliard mobility operator is assembled in a structure-preserving manner.

The preceding results establish the structural stability of the CARE-SAV discretization through energy dissipation and, for conserved flows, mass conservation. We now complement these properties by examining the numerical conditioning of the CARE representation and the reduced linear systems arising in the efficient implementation.

\begin{proposition}[Conditioning of the CARE-SAV formulation]
\label{prop:Conditioning of the CARE-SAV formulation}
  Let $\kappa_2(B)=\sigma_{\max}(B)/\sigma_{\min}(B)$ for a nonsingular square matrix. In exact arithmetic, the CARE mass matrix satisfies
  \begin{equation}
  \label{eq:main1_prop2}
    \boldsymbol{M}_K^Q:=\Psi^{T} W\Psi=\boldsymbol{I}_K, \quad \kappa_2(\boldsymbol{M}_K^Q)=1.
  \end{equation}
  If a computed basis $\widetilde\Psi $ has orthogonality defect $\delta_Q:=\|\widetilde\Psi^{T} W\widetilde\Psi-\boldsymbol{I}_K\|_2<1$, then we have
  \begin{equation}
  \label{eq:main2_prop2}
    \kappa_2(\widetilde\Psi^{T} W\widetilde\Psi) \le \frac{1+\delta_Q}{1-\delta_Q}.
  \end{equation}
  Moreover, the scalar denominator satisfies
  \begin{equation}
  \label{eq:main3_prop2}
    D_n:=1-\frac{\Delta t}{4}(\boldsymbol b^{n+1/2})^{T}\boldsymbol q^{n+1/2}\ge1.
  \end{equation}
\end{proposition}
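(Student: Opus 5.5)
The proposition has three parts, and I will prove them in order.

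\textbf{Part one: the exact identity \eqref{eq:main1_prop2}.} This follows from the CARE construction. By definition $\Psi=\Phi\mathsf T$ with $\mathsf T=\mathsf P_K\mathsf R_K^{-1}$. The truncated CPQR gives $W^{1/2}\Phi\mathsf P_K=Q_K\mathsf R_K$, so $W^{1/2}\Psi=Q_K$. Hence $\Psi^TW\Psi=Q_K^TQ_K=\boldsymbol I_K$. All singular values of $\boldsymbol I_K$ equal one, so $\kappa_2=1$.

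\textbf{Part two: the perturbation bound \eqref{eq:main2_prop2}.} Set $\boldsymbol E:=\widetilde\Psi^TW\widetilde\Psi-\boldsymbol I_K$. This matrix is symmetric, and $\|\boldsymbol E\|_2=\delta_Q<1$. By Weyl's inequality, every eigenvalue of the symmetric matrix $\widetilde\Psi^TW\widetilde\Psi=\boldsymbol I_K+\boldsymbol E$ lies in $[1-\delta_Q,1+\delta_Q]$. Since $\delta_Q<1$, these eigenvalues are positive. So the matrix is symmetric positive definite, its singular values coincide with its eigenvalues, and it is nonsingular. Therefore
\[
\kappa_2=\frac{\lambda_{\max}}{\lambda_{\min}}\le\frac{1+\delta_Q}{1-\delta_Q}.
\]
Equivalently, one can use $|\boldsymbol z^T(\boldsymbol I_K+\boldsymbol E)\boldsymbol z-\|\boldsymbol z\|^2|\le\delta_Q\|\boldsymbol z\|^2$ together with the Rayleigh quotient characterization.

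\textbf{Part three: the denominator bound \eqref{eq:main3_prop2}.} This is exactly \eqref{eq:main_prop2} in Proposition~\ref{prop:well-posedness}, so I can cite it. For completeness, I would argue as follows. Nonsingularity of $\boldsymbol A_K$ is known from Proposition~\ref{prop:well-posedness}, so $\boldsymbol q:=\boldsymbol q^{n+1/2}$ is well defined, and I write $\boldsymbol b:=\boldsymbol b^{n+1/2}$. Define the auxiliary vector $\boldsymbol y:=\boldsymbol b-\frac{\Delta t}{2}\boldsymbol L_K\boldsymbol q$. Rearranging $\boldsymbol A_K\boldsymbol q=\boldsymbol G_K\boldsymbol b$ gives $\boldsymbol q=\boldsymbol G_K\boldsymbol y$. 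Then
\[
\boldsymbol b^T\boldsymbol q=\Big(\boldsymbol y+\frac{\Delta t}{2}\boldsymbol L_K\boldsymbol q\Big)^T\boldsymbol q=\boldsymbol y^T\boldsymbol G_K\boldsymbol y+\frac{\Delta t}{2}\boldsymbol q^T\boldsymbol L_K\boldsymbol q .
\]
The first term is $\le0$ by Assumption~\ref{as1}, since $\boldsymbol G_K$ is dissipative. The second term has a sign opposite to the one needed: it is $\ge0$ because $\boldsymbol L_K\succeq0$.

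\textbf{Main obstacle.} The difficulty is to show that the total $\boldsymbol b^T\boldsymbol q$ is nonpositive even though the $\boldsymbol L_K$ term is nonnegative. The first thing I would try is a different decomposition, pairing $\boldsymbol q$ with $\boldsymbol y$ instead of with $\boldsymbol b$. Using $\boldsymbol b=\boldsymbol y+\frac{\Delta t}{2}\boldsymbol L_K\boldsymbol G_K\boldsymbol y$, one gets
\[
\boldsymbol b^T\boldsymbol q=\boldsymbol y^T\boldsymbol G_K\boldsymbol y+\frac{\Delta t}{2}(\boldsymbol G_K\boldsymbol y)^T\boldsymbol L_K(\boldsymbol G_K\boldsymbol y).
\]
The same sign conflict appears. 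If $\boldsymbol G_K$ is symmetric negative semidefinite, the conflict can be resolved: write $\boldsymbol G_K=-\boldsymbol S^T\boldsymbol S$. Then $\boldsymbol b^T\boldsymbol q=-\boldsymbol b^T\boldsymbol S^T(\boldsymbol I+\frac{\Delta t}{2}\boldsymbol S\boldsymbol L_K\boldsymbol S^T)^{-1}\boldsymbol S\boldsymbol b\le0$, by the push-through identity, because the middle factor is symmetric positive definite. That covers the cases $\mathcal G=-I$ and $\mathcal G=\Delta$ when the operators are assembled symmetrically. Should the appendix proof of Proposition~\ref{prop:well-posedness} rely on a different identity, I would simply invoke \eqref{eq:main_prop2} directly. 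Once $\boldsymbol b^T\boldsymbol q\le0$ is in hand, $D_n=1-\frac{\Delta t}{4}\boldsymbol b^T\boldsymbol q\ge1$ follows immediately.
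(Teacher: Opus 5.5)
Your parts one and two are the paper's proof: orthonormality from $W^{1/2}\Psi=Q_K$, then the eigenvalue enclosure $[1-\delta_Q,1+\delta_Q]$ for a symmetric perturbation of $\boldsymbol I_K$. For part three, the paper also just recalls the identity \eqref{eq:SM1_prop1} from the proof of Proposition~\ref{prop:well-posedness}. So your citation route is exactly the paper's, and with it your proposal is complete.

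The self-contained argument you add ``for completeness'', however, rests on a sign slip, and the ``main obstacle'' comes from that slip. From $\boldsymbol A_K\boldsymbol q=\boldsymbol G_K\boldsymbol b$ with $\boldsymbol A_K=\boldsymbol I_K-\frac{\Delta t}{2}\boldsymbol G_K\boldsymbol L_K$ you get
$\boldsymbol q=\boldsymbol G_K\boldsymbol b+\frac{\Delta t}{2}\boldsymbol G_K\boldsymbol L_K\boldsymbol q=\boldsymbol G_K\boldsymbol z$, with $\boldsymbol z:=\boldsymbol b+\frac{\Delta t}{2}\boldsymbol L_K\boldsymbol q$. Your vector $\boldsymbol y=\boldsymbol b-\frac{\Delta t}{2}\boldsymbol L_K\boldsymbol q$ would correspond to $\boldsymbol I_K+\frac{\Delta t}{2}\boldsymbol G_K\boldsymbol L_K$ instead. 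So the relation $\boldsymbol q=\boldsymbol G_K\boldsymbol y$ is false, and both of your expansions of $\boldsymbol b^T\boldsymbol q$ carry the wrong sign on the $\boldsymbol L_K$ term. With the correct vector, $\boldsymbol b=\boldsymbol z-\frac{\Delta t}{2}\boldsymbol L_K\boldsymbol q$ and
\[
\boldsymbol b^T\boldsymbol q=\boldsymbol z^T\boldsymbol G_K\boldsymbol z-\frac{\Delta t}{2}\,\boldsymbol q^T\boldsymbol L_K\boldsymbol q\le0 .
\]
Both terms are nonpositive by Assumption~\ref{as1}. This is exactly \eqref{eq:SM1_prop1}, and $D_n\ge1$ follows at once.

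The detour through $\boldsymbol G_K=-\boldsymbol S^T\boldsymbol S$ is therefore unnecessary. Your push-through computation is correct, but it proves strictly less. Assumption~\ref{as1} only requires $\boldsymbol z^T\boldsymbol G_K\boldsymbol z\le0$, which permits nonsymmetric $\boldsymbol G_K$; for instance, a quadrature-assembled $(\mathcal G\psi_j,\psi_i)_Q$ with $\mathcal G=\Delta$ need not be exactly symmetric. For such $\boldsymbol G_K$ your independent argument does not apply, so as written only the citation covers the general case. Replace the obstacle discussion and the symmetric special case with the two-line identity above.
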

\begin{proof}
  Proof see \cref{sec:pf:prop:conditioning}.
\end{proof}

The preceding results establish both the structural stability and numerical robustness of the CARE-SAV discretization. We next quantify its approximation accuracy by deriving a fully discrete error estimate and the corresponding convergence result.

\subsection{Error Estimates and Convergence Analysis}
We first introduce several approximation quantities that characterize the spatial error induced by the random-feature dictionary and its subsequent CARE truncation. These quantities provide a unified measure of the approximation capability of the reduced CARE space and will be used in the fully discrete error estimate below.

With $W=W_N$ and $\Phi=\Phi_{M,N}$ denoting, respectively, the quadrature-weight matrix and feature-evaluation matrix introduced above, let
$$A_{M,N}:=W_N^{1/2}\Phi_{M,N},$$
and let $Q_K$ span the weighted sampled columns retained by CPQR. Define the computable tail quantity
$$\varepsilon_{M,N,K}^{\mathrm{QR}}:=\|(I_N-Q_K Q_K^{T})A_{M,N}\|_2.$$
For a target normed space $X$, define the sampling lift
$$\Lambda_M(X):=\sup_{0\ne v\in V_M^{\mathrm{RF}}}\frac{\|v\|_X}{\|v\|_{Q,N}},$$
with $\Lambda_M(X)=\infty$ when nodal sampling is not injective on $V_M^{\mathrm{RF}}$. Finally, define
$$\mathfrak a_{M,N,K}^{X}(f):=\inf_{\boldsymbol a\in\mathbb R^M}\left[\|f-\boldsymbol\varphi^{T}\boldsymbol a\|_X+\Lambda_M(X)\varepsilon_{M,N,K}^{\mathrm{QR}}\|\boldsymbol a\|_2\right].$$

The following lemma quantifies the additional approximation error introduced by the CARE truncation. In particular, it separates the approximation error of the original random-feature dictionary from the compression error induced by retaining only the leading CARE directions.
\begin{lemma}[CARE Truncation Estimate]
\label{lem:CARE truncation estimate}
  For every $v_M=\boldsymbol\varphi^{T}\boldsymbol a\in V_M^{\mathrm{RF}}$, we have
  \begin{equation}
  \label{eq:main1_lem1}
    \inf_{v_K\in V_K^{\mathrm{CARE}}}\|v_M-v_K\|_{Q,N}\le\varepsilon_{M,N,K}^{\mathrm{QR}}\|\boldsymbol a\|_2.
  \end{equation}
  Therefore, we obtain
  \begin{equation}
  \label{eq:main2_lem1}
    \inf_{v_K\in V_K^{\mathrm{CARE}}}\|f-v_K\|_X \le\mathfrak a_{M,N,K}^{X}(f).
  \end{equation}
\end{lemma}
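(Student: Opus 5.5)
The plan is to prove the first estimate \eqref{eq:main1_lem1} by an explicit choice of competitor in $V_K^{\mathrm{CARE}}$, and then to obtain \eqref{eq:main2_lem1} by a triangle inequality combined with the sampling lift $\Lambda_M(X)$.

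\textbf{Step 1: reduce the discrete norm to vector algebra.} For any function $v$ sampled at the quadrature nodes, we have $\|v\|_{Q,N}=\|W_N^{1/2}\boldsymbol v\|_2$, where $\boldsymbol v=(v(\boldsymbol x_q))_q$. Hence, for $v_M=\boldsymbol\varphi^{T}\boldsymbol a$, the sampled weighted vector is $A_{M,N}\boldsymbol a$.

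Every $v_K\in V_K^{\mathrm{CARE}}$ has the form $\boldsymbol\psi^{T}\boldsymbol y$. Its weighted samples are $W^{1/2}\Psi\boldsymbol y=Q_K\boldsymbol y$, since $W^{1/2}\Psi=Q_K$ by construction. As $\boldsymbol y$ ranges over $\mathbb R^K$, these samples sweep out all of $\mathrm{range}(Q_K)$.

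Therefore
\[
\inf_{v_K}\|v_M-v_K\|_{Q,N}=\min_{\boldsymbol y}\|A_{M,N}\boldsymbol a-Q_K\boldsymbol y\|_2 .
\]

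\textbf{Step 2: choose the competitor.} Take $\boldsymbol y=Q_K^{T}A_{M,N}\boldsymbol a$. Because $Q_K$ has orthonormal columns, $Q_KQ_K^{T}$ is the orthogonal projector onto $\mathrm{range}(Q_K)$, so this choice attains the minimum. The residual is $(I_N-Q_KQ_K^{T})A_{M,N}\boldsymbol a$, and its norm is bounded by $\varepsilon^{\mathrm{QR}}_{M,N,K}\|\boldsymbol a\|_2$ through the operator-norm inequality. This proves \eqref{eq:main1_lem1}.

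\textbf{Step 3: pass from the discrete norm to $X$.} Fix $\boldsymbol a$ and set $v_M=\boldsymbol\varphi^{T}\boldsymbol a$. Let $v_K^\ast$ be the competitor from Step 2. Then $v_M-v_K^\ast\in V_M^{\mathrm{RF}}$, because $V_K^{\mathrm{CARE}}\subset V_M^{\mathrm{RF}}$. The definition of $\Lambda_M(X)$ then gives $\|v_M-v_K^\ast\|_X\le\Lambda_M(X)\|v_M-v_K^\ast\|_{Q,N}$.

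The triangle inequality yields
\[
\inf_{v_K}\|f-v_K\|_X\le\|f-v_M\|_X+\Lambda_M(X)\,\varepsilon^{\mathrm{QR}}_{M,N,K}\|\boldsymbol a\|_2 .
\]
Taking the infimum over $\boldsymbol a\in\mathbb R^M$ gives \eqref{eq:main2_lem1}.

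\textbf{The delicate point.} The inclusion $V_K^{\mathrm{CARE}}\subset V_M^{\mathrm{RF}}$ is needed so that $v_M-v_K^\ast$ lies in the space on which $\Lambda_M(X)$ is defined. It holds because $\Psi=\Phi\mathsf T$, so each CARE basis function is a linear combination of the features $\varphi_j$; the CARE functions are therefore genuine functions in the span, not merely nodal vectors.

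Two edge cases need a remark. If $\Lambda_M(X)=\infty$, the bound is trivial, with the convention $\infty\cdot 0=0$ understood when $\varepsilon^{\mathrm{QR}}_{M,N,K}\|\boldsymbol a\|_2=0$. In that subcase the residual vanishes at the nodes but need not vanish in $X$, so the convention should be stated explicitly, or the infimum restricted to finite terms.
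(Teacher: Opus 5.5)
Your proof is correct and follows the paper's own argument: you identify $\inf_{v_K}\|v_M-v_K\|_{Q,N}$ with the distance from $A_{M,N}\boldsymbol a$ to $\operatorname{range}(Q_K)$, bound the projection residual by the spectral norm, and then combine $V_K^{\mathrm{CARE}}\subset V_M^{\mathrm{RF}}$, the lift $\Lambda_M(X)$, the triangle inequality and the infimum over $\boldsymbol a$. One correction to your edge-case remark: under the convention $\infty\cdot 0=0$ the bound is not trivial and can actually fail. If $\varepsilon^{\mathrm{QR}}_{M,N,K}=0$ and $\Lambda_M(X)=\infty$, then $V_K^{\mathrm{CARE}}\subsetneq V_M^{\mathrm{RF}}$, and any feature $f=\varphi_j\notin V_K^{\mathrm{CARE}}$ gets $\mathfrak a^{X}_{M,N,K}(f)=0$ via $\boldsymbol a=\boldsymbol e_j$, despite having positive $X$-distance to $V_K^{\mathrm{CARE}}$. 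Restricting to finite terms does not help, so the lemma must be read with $\mathfrak a^{X}_{M,N,K}(f)=+\infty$ whenever $\Lambda_M(X)=\infty$ (or under the hypothesis $\Lambda_M(X)<\infty$); the paper's proof also leaves this implicit.
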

\begin{proof}
  Proof see \cref{sec:pf:lem:CARE truncation estimate}.
\end{proof}

Lem.~\ref{lem:CARE truncation estimate} quantifies the approximation loss introduced by the CARE truncation, provided that the underlying random-feature dictionary is sufficiently expressive. We next establish that, under full-support random sampling, the Gaussian candidate spaces possess the required approximation capacity almost surely.

\begin{lemma}
\label{lem:almost-sure density at infinite width}
  Let $\Theta=\Omega\times[\sigma_{\min},\sigma_{\max}], 0<\sigma_{\min}<\sigma_{\max},$ and suppose the feature parameters $(\boldsymbol c_j,\sigma_j)$ are i.i.d. from a probability measure having full support on $\Theta$. If $\Omega$ is the closure of a bounded domain with nonempty interior and the ordinary Gaussian features in \eqref{eq:Euclidean-Gaussian-feature} are used, then
  \begin{equation}
    \overline{\bigcup_{M\ge1}V_M^{\mathrm{RF}}}^{\,C(\Omega)}=C(\Omega)
  \end{equation}
  almost surely holds. The same union is almost surely dense in $L^2(\Omega)$. If $\Omega$ is instead a periodic rectangular cell and the periodized Gaussian features in \eqref{eq:periodized-Gaussian-feature} are used, then
  \begin{equation}
    \overline{\bigcup_{M\ge1}V_M^{\mathrm{RF}}}^{\,C_{\mathrm{per}}(\overline\Omega)}
    =C_{\mathrm{per}}(\overline\Omega)
  \end{equation}
  almost surely, and the same union is almost surely dense in $L^2_{\mathrm{per}}(\Omega)$.
\end{lemma}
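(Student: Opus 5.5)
Two ingredients are combined. The first is a deterministic density statement for the full (continuum) Gaussian dictionary $\{\varphi_\theta:\theta\in\Theta\}$. The second is a probabilistic step: an i.i.d. full-support sample $\{\theta_j\}_{j\ge1}$ is almost surely dense in $\Theta$, and $\theta\mapsto\varphi_\theta$ is continuous into $C(\Omega)$ (respectively $C_{\mathrm{per}}(\overline\Omega)$).

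\textbf{Probabilistic step.} Fix a countable base of open sets $U_i\subset\Theta$ relative to the induced topology; each is nonempty. Full support gives $\mathbb P(\theta_1\in U_i)=p_i>0$. By independence, $\mathbb P(\theta_j\notin U_i\ \forall j)=\lim_n(1-p_i)^n=0$. A countable union of null events is null, so almost surely every $U_i$ contains some $\theta_j$, i.e. $\{\theta_j\}$ is dense in $\Theta$.

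\textbf{Continuity of the feature map.} $\Omega$ is compact, $\sigma\ge\sigma_{\min}>0$, and $(\boldsymbol x,\boldsymbol c,\sigma)\mapsto e^{-\|\boldsymbol x-\boldsymbol c\|^2/(2\sigma^2)}$ is uniformly continuous on $\Omega\times\Theta$. Hence $\theta\mapsto\varphi_\theta$ is continuous into $C(\Omega)$. In the periodic case the lattice sum converges uniformly (Gaussian tails, $\sigma\le\sigma_{\max}$), so the same conclusion holds in $C_{\mathrm{per}}$. Consequently, on the almost-sure event above, the closure of $\bigcup_M V_M^{\mathrm{RF}}$ contains every $\varphi_\theta$, $\theta\in\Theta$, and therefore contains $\overline{\mathrm{span}}\{\varphi_\theta:\theta\in\Theta\}$.

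\textbf{Deterministic density (main obstacle).} I would prove density of the continuum span by Hahn--Banach and Riesz. Suppose a finite signed Borel measure $\nu$ on $\Omega$ annihilates all $\varphi_\theta$. Fix $\sigma=\sigma_{\min}$. Then $F(\boldsymbol c):=\int e^{-\|\boldsymbol x-\boldsymbol c\|^2/(2\sigma^2)}\,d\nu(\boldsymbol x)$ vanishes for $\boldsymbol c$ in $\Omega$, which has nonempty interior. Since $F$ extends to an entire function on $\mathbb C^d$ (as $\nu$ has compact support), $F\equiv0$ on $\mathbb R^d$. Now $F=G_\sigma*\nu$, and its Fourier transform $\widehat G_\sigma\widehat\nu$ vanishes. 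Because $\widehat G_\sigma>0$ everywhere, $\widehat\nu=0$, so $\nu=0$. Hence the continuum span is dense in $C(\Omega)$.

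\textbf{Periodic case.} Take $\nu$ on the torus. Then $F=G^{\mathrm{per}}_\sigma*\nu$ vanishes for $\boldsymbol c$ in the open cell. This function is real-analytic on the torus and therefore vanishes identically. Its Fourier coefficients are $\widehat{G^{\mathrm{per}}_\sigma}(\boldsymbol k)\widehat\nu(\boldsymbol k)$, where $\widehat{G^{\mathrm{per}}_\sigma}(\boldsymbol k)$ is, by Poisson summation, proportional to $e^{-2\pi^2\sigma^2\sum_\ell k_\ell^2/L_\ell^2}>0$. Hence all $\widehat\nu(\boldsymbol k)=0$ and $\nu=0$. The delicate point is to state the analytic continuation carefully, using that $\Omega$ has nonempty interior.

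\textbf{$L^2$ statements.} $C(\Omega)$ is dense in $L^2(\Omega)$, and $\|\cdot\|_{L^2}\le|\Omega|^{1/2}\|\cdot\|_\infty$; the same holds for $C_{\mathrm{per}}$ in $L^2_{\mathrm{per}}$. So uniform density implies $L^2$ density on the same almost-sure event.
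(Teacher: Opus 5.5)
Your proof is correct. Its skeleton matches the paper's: almost-sure density of the i.i.d. parameters via a countable base and $(1-p_i)^n\to0$, then continuity of $\theta\mapsto\varphi_\theta$ into the sup-norm space, then a deterministic density statement.

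The difference is in the deterministic step.
\begin{itemize}
\item The paper cites Park--Sandberg for the nonperiodic case. On the torus it argues constructively: since every Fourier coefficient of $g_{\sigma_0}^{\mathrm{per}}$ is nonzero, the closed span of its translates contains every trigonometric polynomial.
\item You argue by duality (Hahn--Banach plus Riesz). You kill an annihilating measure $\nu$ through $\widehat G_\sigma\,\widehat\nu=0$ with $\widehat G_\sigma>0$.
\end{itemize}

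Your route pays off mainly in the nonperiodic case. The sampled centres lie only in $\Omega$ and the widths only in $[\sigma_{\min},\sigma_{\max}]$, so parameter density can only recover Gaussians with parameters in $\Theta$. The Park--Sandberg theorem, by contrast, is formulated with centres ranging over all of $\mathbb R^d$ and the common width as a free parameter, so the citation does not directly give density of $\Omega$-centred Gaussians of fixed width. Your analytic-continuation step closes exactly this gap: an entire $F$ that vanishes on the interior of $\Omega$ vanishes on $\mathbb R^d$. It is also the only place where the hypothesis that $\Omega$ has nonempty interior is actually used; the paper states that hypothesis but never invokes it.

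The paper's torus argument has its own advantage: it is constructive, exhibiting each exponential as a uniform limit of Riemann sums of translates, whereas yours is existential. In substance the two are dual forms of the same Fourier-coefficient fact.

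One simplification: in the periodic case you do not need analytic continuation. The centres range over a full fundamental cell, so $F$ vanishes on the whole torus by continuity alone. The ``delicate point'' you flag arises only in the nonperiodic setting.
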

\begin{proof}
  Proof see \cref{sec:pf:lem:almost-sure density at infinite width}.
\end{proof}

To connect the CARE approximation with the fully discrete error analysis, let $P_K^Q$ be the quadrature-orthogonal projection onto $V_K^{\mathrm{CARE}}$, and define
$$
\phi_K^*(t):=P_K^Q\phi(t)=\boldsymbol\psi^{T}\boldsymbol c_*(t),
\quad
\mu_K^*(t):=P_K^Q\mu(t)=\boldsymbol\psi^{T}\boldsymbol d_*(t),
\quad
r_*(t):=r(t).
$$
Let $\mathfrak A_{M,N,K}(T)$ and $\mathfrak Q_{M,N,K}(T)$ denote the CARE approximation error and the quadrature-assembly consistency error respectively, as defined in \cref{sec:Projected Reference Solution and Consistency Estimates}. Under the regularity and projection-stability assumptions stated there, the residuals generated by inserting the projected exact trajectory into the reduced CARE-SAV system satisfy
\begin{equation}
  \begin{aligned}
    \sup_{0\le t\le T}\left(
    \|\boldsymbol\xi_{\mathcal G}(t)\|+\|\boldsymbol\xi_{\mathcal L,U}(t)\|+|\xi_r(t)|
    \right)
    &\le C\bigl[\mathfrak A_{M,N,K}(T)+\mathfrak Q_{M,N,K}(T)\bigr].
  \end{aligned}
\end{equation}
These consistency estimates allow the spatial approximation errors to be propagated through the fully discrete scheme, as quantified by the following stability lemma.

Throughout the following error analysis, $D_t v^n:=(v^{n+1}-v^n)/\Delta t$.

\begin{lemma}[Fixed-space Perturbation Stability]
\label{lem:Fixed-space Perturbation Stability}
  Let $(\boldsymbol c_*^n,r_*^n)$ be a reference sequence satisfying the CARE-SAV with derivative-scale residuals $\boldsymbol\rho_{c,n}$, $\boldsymbol\rho_{d,n}$ and $\rho_{r,n}$. Suppose that $\boldsymbol b_K$ is locally Lipschitz on the relevant bounded set, namely
  \begin{equation}
    \|\boldsymbol b_K(\boldsymbol x)-\boldsymbol b_K(\boldsymbol y)\|
    \le L_{b,K}\|\boldsymbol x-\boldsymbol y\|,
  \end{equation}
  and that $\|\boldsymbol b^{n+1/2}\|\le B_b$, $|r_*^{n+1/2}|\le B_r$ and $\|D_t\boldsymbol c_*^n\|\le B_t$. For sufficiently small $\Delta t$, we have
  \begin{equation}
    E_{n+1}\le(1+C_K\Delta t)E_n+C_K\Delta t E_{n-1} +C_K\Delta t\mathcal K_n,
  \end{equation}
  where
  $$E_n=\|\boldsymbol c_*^n-\boldsymbol c^n\|+|r_*^n-r^n|,\quad\mathcal K_n=\|\boldsymbol\rho_{c,n}\|+\|\boldsymbol\rho_{d,n}\|+|\rho_{r,n}|.$$
\end{lemma}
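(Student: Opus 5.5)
We would argue by subtracting the scheme from the residual-perturbed reference system and estimating the error equations directly in the Euclidean coefficient norm. For the reference sequence we write
\begin{equation*}
D_t\boldsymbol c_*^n=\boldsymbol G_K\boldsymbol d_*^{n+1/2}+\boldsymbol\rho_{c,n},\qquad
\boldsymbol d_*^{n+1/2}=\boldsymbol L_K\boldsymbol c_*^{n+1/2}+r_*^{n+1/2}\boldsymbol b_*^{n+1/2}+\boldsymbol\rho_{d,n},\qquad
D_t r_*^n=\tfrac12(\boldsymbol b_*^{n+1/2})^{T}D_t\boldsymbol c_*^n+\rho_{r,n}.
\end{equation*}
Here $\boldsymbol b_*^{n+1/2}=\boldsymbol b_K(\overline{\boldsymbol c}_*^{\,n+1/2})$. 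We set $\boldsymbol e^n=\boldsymbol c_*^n-\boldsymbol c^n$, $s^n=r_*^n-r^n$, $\boldsymbol\eta^{n+1/2}=\boldsymbol d_*^{n+1/2}-\boldsymbol d^{n+1/2}$ and $\boldsymbol\beta=\boldsymbol b_*^{n+1/2}-\boldsymbol b^{n+1/2}$. Subtracting the scheme gives
\begin{align*}
D_t\boldsymbol e^n&=\boldsymbol G_K\boldsymbol\eta^{n+1/2}+\boldsymbol\rho_{c,n},\\
\boldsymbol\eta^{n+1/2}&=\boldsymbol L_K\boldsymbol e^{n+1/2}+s^{n+1/2}\boldsymbol b^{n+1/2}+r_*^{n+1/2}\boldsymbol\beta+\boldsymbol\rho_{d,n},\\
D_t s^n&=\tfrac12(\boldsymbol b^{n+1/2})^{T}D_t\boldsymbol e^n+\tfrac12\boldsymbol\beta^{T}D_t\boldsymbol c_*^n+\rho_{r,n}.
\end{align*}
The extrapolation is linear, so the Lipschitz hypothesis yields $\|\boldsymbol\beta\|\le L_{b,K}\|\overline{\boldsymbol e}^{\,n+1/2}\|\le L_{b,K}(\tfrac32\|\boldsymbol e^n\|+\tfrac12\|\boldsymbol e^{n-1}\|)$. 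This is the only place where $E_{n-1}$ enters, and it explains the $C_K\Delta t E_{n-1}$ term.

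We do not use an energy argument, since the bound is stated in the plain norm $E_n$. Instead we exploit the fact that the space is fixed and finite-dimensional, so all reduced matrices have $K$-dependent norms.
\begin{enumerate}
\item Substitute $\boldsymbol\eta$ into the first equation and write the result as
\begin{equation*}
\boldsymbol A_K\boldsymbol e^{n+1}=\Bigl(\boldsymbol I_K+\tfrac{\Delta t}{2}\boldsymbol G_K\boldsymbol L_K\Bigr)\boldsymbol e^n+\Delta t\,\boldsymbol G_K\bigl(s^{n+1/2}\boldsymbol b^{n+1/2}+r_*^{n+1/2}\boldsymbol\beta+\boldsymbol\rho_{d,n}\bigr)+\Delta t\,\boldsymbol\rho_{c,n}.
\end{equation*}
\item For $\Delta t\le\Delta t_0$, with $\Delta t_0\,\|\boldsymbol G_K\boldsymbol L_K\|$ small, a Neumann series gives $\|\boldsymbol A_K^{-1}\|\le1+C\Delta t$. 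Proposition~\ref{prop:well-posedness} gives nonsingularity for all $\Delta t$, but here we need the quantitative bound.
\item Combining these, and using $\|\boldsymbol b^{n+1/2}\|\le B_b$ and $|r_*^{n+1/2}|\le B_r$, we expect
\begin{equation*}
\|\boldsymbol e^{n+1}\|\le(1+C\Delta t)\|\boldsymbol e^n\|+C\Delta t\bigl(|s^n|+|s^{n+1}|+\|\boldsymbol e^n\|+\|\boldsymbol e^{n-1}\|+\mathcal K_n\bigr).
\end{equation*}
\end{enumerate}

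For the scalar equation we multiply by $\Delta t$ and obtain
\begin{equation*}
|s^{n+1}|\le|s^n|+\tfrac12 B_b\|\boldsymbol e^{n+1}-\boldsymbol e^n\|+\tfrac12\Delta t\,B_t\|\boldsymbol\beta\|+\Delta t|\rho_{r,n}|.
\end{equation*}
The main obstacle is the increment term $\|\boldsymbol e^{n+1}-\boldsymbol e^n\|$, which must be $O(\Delta t)$ and not $O(1)$. We handle it by using the first error equation again: $\boldsymbol e^{n+1}-\boldsymbol e^n=\Delta t(\boldsymbol G_K\boldsymbol\eta^{n+1/2}+\boldsymbol\rho_{c,n})$. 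Bounding $\|\boldsymbol\eta^{n+1/2}\|$ through the second error equation then gives
\begin{equation*}
\|\boldsymbol e^{n+1}-\boldsymbol e^n\|\le C\Delta t\bigl(\|\boldsymbol e^{n+1}\|+\|\boldsymbol e^n\|+\|\boldsymbol e^{n-1}\|+|s^{n+1}|+|s^n|+\mathcal K_n\bigr).
\end{equation*}
The constant here depends on $\|\boldsymbol G_K\|$ and $\|\boldsymbol L_K\|$, and that dependence is the source of the $K$-dependence in $C_K$.

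Adding the two estimates gives
\begin{equation*}
E_{n+1}\le E_n+C\Delta t\bigl(E_{n+1}+E_n+E_{n-1}+\mathcal K_n\bigr).
\end{equation*}
For $C\Delta t\le\tfrac12$ we absorb $C\Delta t\,E_{n+1}$ into the left-hand side and use $(1-C\Delta t)^{-1}\le1+2C\Delta t$. This produces the claimed recursion with a suitable $C_K$ that depends on $\|\boldsymbol G_K\|$, $\|\boldsymbol L_K\|$, $B_b$, $B_r$, $B_t$ and $L_{b,K}$, and the smallness condition on $\Delta t$ is exactly this absorption threshold together with the Neumann-series condition.
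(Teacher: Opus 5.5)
Your proposal is correct and follows the paper's proof in all essentials: the same error equations, the same Lipschitz bound on $\boldsymbol\beta_n$ (the only source of $E_{n-1}$), an $O(\Delta t)$ bound on the increment $\boldsymbol e^{n+1}-\boldsymbol e^n$ from the fixed-space matrix norms, and the same scalar increment estimate. The one difference is how the implicit terms are handled. The paper first eliminates $e_r^{n+1/2}$ through the scalar equation and inverts $\boldsymbol K_n=\boldsymbol I_K-\frac{\Delta t}{2}\boldsymbol G_K\boldsymbol L_K-\frac{\Delta t}{4}\boldsymbol G_K\boldsymbol b\boldsymbol b^{T}$ with $\|\boldsymbol K_n^{-1}\|\le2$, so no $E_{n+1}$ appears on the right; you instead absorb $C\Delta t\,E_{n+1}$ at the end. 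Your absorption also makes the separate Neumann-series bound on $\boldsymbol A_K^{-1}$ (steps 1--3) redundant, because the increment bound plus the triangle inequality already controls $\|\boldsymbol e^{n+1}\|$.
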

\begin{proof}
  Proof see \cref{sec:pf:lem:Fixed-space Perturbation Stability}
\end{proof}

The preceding approximation and stability results provide the ingredients needed for the global error analysis. Combining the CARE approximation estimates with the consistency bounds and the perturbation stability result yields the following fully discrete error estimate.

\begin{theorem}[Fully Discrete Error Estimate]
  \label{thm:fully-discrete-error-estimate}
  Assume that the candidate features satisfy the prescribed boundary conditions, so that $V_K^{\mathrm{CARE}}$ is contained in the corresponding admissible energy space; in particular, $V_K^{\mathrm{CARE}}\subset X_{\mathrm{per}}$ for periodic problems, and $P_K^Q$ is the projection onto this boundary-compatible space. Assume also that the exact solution is sufficiently smooth for the Crank-Nicolson and extrapolation residuals to be of derivative-scale order $O(\Delta t^2)$. Suppose that the projection and norm-equivalence bounds used to define $\mathfrak A_{M,N,K}(T)$ and $\mathfrak Q_{M,N,K}(T)$ hold, that the hypotheses of Lem.~\ref{lem:Fixed-space Perturbation Stability} are satisfied, and that $\Delta t$ is sufficiently small for its stability conclusion. If the initial error is consistent in the sense that
  \begin{equation}
    E_0\leq C\bigl[\mathfrak A_{M,N,K}(T)+\mathfrak Q_{M,N,K}(T)\bigr],
    \label{eq:initial-consistency}
  \end{equation}
  then, for exact linear solves,
  \begin{equation}
    \max_{t_n\leq T}\left(\|\phi(t_n)-\phi_K^n\|_{L^2}+|r(t_n)-r^n|\right)
    \leq C_{T,K}\left[\Delta t^2+\mathfrak A_{M,N,K}(T)+\mathfrak Q_{M,N,K}(T)\right],
    \label{eq:fully-discrete-error-estimate}
  \end{equation}
  where $C_{T,K}$ is independent of $\Delta t$.
\end{theorem}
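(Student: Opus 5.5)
The plan is the standard "projection + consistency + discrete Gronwall" argument, organized around Lemma~\ref{lem:Fixed-space Perturbation Stability}. First I split the error at $t_n$ as
\[
\phi(t_n)-\phi_K^n=\bigl(\phi(t_n)-\phi_K^*(t_n)\bigr)+\bigl(\phi_K^*(t_n)-\phi_K^n\bigr).
\]
The first term is bounded by the CARE approximation error; by Lemma~\ref{lem:CARE truncation estimate} with $X=L^2$ and the definition of $\mathfrak A_{M,N,K}(T)$, it is at most $C\mathfrak A_{M,N,K}(T)$. For the second term, the function lies in $V_K^{\mathrm{CARE}}$. Using the discrete orthonormality $\Psi^TW\Psi=\boldsymbol I_K$ and the assumed norm equivalence between $\|\cdot\|_{L^2}$ and $\|\cdot\|_{Q,N}$ on $V_K^{\mathrm{CARE}}$ (with constant depending on $K$, hence the subscript in $C_{T,K}$), it is controlled by $C_K\|\boldsymbol c_*^n-\boldsymbol c^n\|$. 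Since $r_*=r$, the $r$-error is exactly the scalar part of $E_n$. It therefore suffices to bound $\max_n E_n$.

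Second, I take the reference sequence $\boldsymbol c_*^n=\boldsymbol c_*(t_n)$, $r_*^n=r(t_n)$ and $\boldsymbol d_*^{n+1/2}$ obtained from $\mu_K^*$ at the midpoint. Inserting it into \eqref{eq:fully_discrete_care_sav} defines residuals $\boldsymbol\rho_{c,n},\boldsymbol\rho_{d,n},\rho_{r,n}$. Each residual splits into two parts:
\begin{itemize}
\item[(i)] a temporal part, from replacing $\partial_t$ by $D_t$, time values by midpoints, and $\boldsymbol b_K(\boldsymbol c_*(t_{n+1/2}))$ by $\boldsymbol b_K(\overline{\boldsymbol c}_*^{\,n+1/2})$. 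By Taylor expansion under the assumed smoothness, and the local Lipschitz bound on $\boldsymbol b_K$, this part is $O(\Delta t^2)$ for $n\ge1$.
\item[(ii)] a spatial part, which is the semidiscrete residual $\boldsymbol\xi_{\mathcal G},\boldsymbol\xi_{\mathcal L,U},\xi_r$ evaluated at the relevant times. By the consistency estimate quoted before Lemma~\ref{lem:Fixed-space Perturbation Stability}, it is at most $C[\mathfrak A+\mathfrak Q]$.
\end{itemize}
Hence $\mathcal K_n\le C(\Delta t^2+\mathfrak A+\mathfrak Q)$. The first step $n=0$ uses $\overline{\boldsymbol c}^{\,1/2}=\boldsymbol c^0$, which gives only an $O(\Delta t)$ extrapolation residual. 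Since it enters once with weight $\Delta t$, it still contributes $O(\Delta t^2)$ to $E_1$; I will treat it separately.

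Third, I verify the boundedness hypotheses of Lemma~\ref{lem:Fixed-space Perturbation Stability} ($\|\boldsymbol b^{n+1/2}\|\le B_b$, $|r_*^{n+1/2}|\le B_r$, $\|D_t\boldsymbol c_*^n\|\le B_t$). For the reference quantities these follow from the regularity of the exact solution. For the computed $\boldsymbol b^{n+1/2}$, which depends on $\boldsymbol c^n$, I use a bootstrap/induction argument. Assume $E_m\le 1$ for $m\le n$; then $\overline{\boldsymbol c}^{\,n+1/2}$ lies in a fixed bounded neighborhood of the reference trajectory, so the Lipschitz and bound constants are uniform. Closing the induction requires $\Delta t^2+\mathfrak A+\mathfrak Q$ to be small, which holds for small $\Delta t$ and under the implicit smallness of the spatial error. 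Theorem~\ref{thm:energy stability} could alternatively supply an a priori bound on $r^n$ and on $\boldsymbol L_K$-seminorms, but the induction is cleaner.

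Finally, I apply the recursion $E_{n+1}\le(1+C_K\Delta t)E_n+C_K\Delta tE_{n-1}+C_K\Delta t\mathcal K_n$. Setting $F_n=\max(E_n,E_{n-1})$ gives $F_{n+1}\le(1+2C_K\Delta t)F_n+C_K\Delta t\mathcal K_n$, and the discrete Gronwall inequality yields
\[
E_n\le e^{2C_KT}\Bigl(E_0+E_1+CT(\Delta t^2+\mathfrak A+\mathfrak Q)\Bigr).
\]
The initial consistency \eqref{eq:initial-consistency} bounds $E_0$, and the first-step analysis bounds $E_1$. Combining with the splitting from the first step gives \eqref{eq:fully-discrete-error-estimate}.

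The main obstacle is the third step: keeping the $K$-dependent Lipschitz constant of $\boldsymbol b_K$ and the bounds $B_b$ uniform along the computed trajectory. This is exactly why the constant is $C_{T,K}$ rather than $K$-independent.
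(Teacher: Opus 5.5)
Your proposal is correct and follows essentially the same argument as the paper. The shared steps are: the projected reference trajectory $(\boldsymbol c_*^n,\boldsymbol d_*^{n+1/2},r(t_n))$; the residual bound $\mathcal K_n\le C[\Delta t^2+\mathfrak A_{M,N,K}(T)+\mathfrak Q_{M,N,K}(T)]$ from Taylor expansion plus the spatial consistency estimate; the $O(\Delta t)$ start-up extrapolation defect, which enters with weight $\Delta t$; the perturbation-stability recursion closed by discrete Gronwall; and the final split $\phi(t_n)-\phi_K^n=(I-P_K^Q)\phi(t_n)+\boldsymbol\psi^{T}(\boldsymbol c_*^n-\boldsymbol c^n)$, handled by norm equivalence. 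Your bootstrap for $B_b$, $B_r$, $B_t$ is unnecessary, because the theorem assumes the lemma's hypotheses outright; if you did use it, it would need an extra smallness condition on $C_{T,K}(\mathfrak A_{M,N,K}(T)+\mathfrak Q_{M,N,K}(T))$ that the statement does not contain. Also, the truncation lemma only bounds the best-approximation error, so bounding $\|(I-P_K^Q)\phi\|_{L^2}$ additionally requires the assumed projection stability $\|P_K^Qv\|_X\le C_{P,K}\|v\|_X$.
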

\begin{proof}
  Let $(\boldsymbol c_*^n,\boldsymbol d_*^{n+1/2},r(t_n))$ be the projected exact trajectory. Taylor expansion of the Crank-Nicolson midpoint rule and of the extrapolation, together with the spatial consistency estimate, yields
  \begin{equation}
    \mathcal K_n\leq C\left[\Delta t^2+\mathfrak A_{M,N,K}(T)+\mathfrak Q_{M,N,K}(T)\right].
    \label{eq:reference-residual-bound}
  \end{equation}
  The scalar residual is retained as an independent error component, following the standard SAV error-analysis treatment~\cite{Chen2020SAVFEM}. The same estimate holds for the startup step; its derivative-scale extrapolation defect is $O(\Delta t)$ and is multiplied by one time step. Lem.~\ref{lem:Fixed-space Perturbation Stability}, with $E_{-1}=E_0$, and the discrete Gronwall inequality therefore give
  \begin{equation}
    \max_{t_n\leq T}\left(\|\boldsymbol c_*^n-\boldsymbol c^n\|+|r(t_n)-r^n|\right)
    \leq C_{T,K}\left[\Delta t^2+\mathfrak A_{M,N,K}(T)+\mathfrak Q_{M,N,K}(T)\right].
    \label{eq:coefficient-error-bound}
  \end{equation}
  Finally, decompose the phase error as
  \begin{equation}
    \phi(t_n)-\phi_K^n=\bigl[\phi(t_n)-P_K^Q\phi(t_n)\bigr]+\boldsymbol\psi^{T}\bigl(\boldsymbol c_*^n-\boldsymbol c^n\bigr).
  \end{equation}
  The first term is bounded by $C\mathfrak A_{M,N,K}(T)$, while the norm-equivalence bound on the fixed CARE space controls the second by the coefficient error in Eq.~\ref{eq:coefficient-error-bound}. Combining these estimates proves Eq.~\ref{eq:fully-discrete-error-estimate}.
\end{proof}

Thm.~\ref{thm:fully-discrete-error-estimate} directly implies convergence of the CARE-SAV approximation whenever the temporal, CARE approximation and quadrature errors vanish under joint refinement and the associated stability constants remain uniformly bounded. In particular, the scheme is second-order accurate in time for a fixed CARE space, while the spatial convergence is governed by the approximation quality of the CARE basis and the quadrature consistency.

Together, these results establish the well-posedness, structure preservation and convergence of the proposed CARE-SAV discretization.

\section{Numerical Experiments}
In this section, we investigate the numerical accuracy and energy stability of the proposed CARE-SAV framework through five representative gradient-flow models: Allen-Cahn equation, Cahn-Hilliard equation, the phase-field crystal model, the molecular beam epitaxy model and the grain growth model.

The common experimental configurations are given in \cref{sec:SM-experimental-setup}. Unless stated otherwise, the settings reported there are used throughout the numerical experiments.

\paragraph{Evaluation Metrics}
Against the reference solutions from \cref{sec:data-generator}, we report the relative $L^2$ and maximum $L^\infty$ errors over the recorded times:
\begin{equation}
  \mathrm{Rel.}L^2 = \left(\frac{\sum_{n\in\mathcal{S}}\|u_Q^n-u_{\mathrm{ref}}^n\|_{2,Q}^{2}}{\sum_{n\in\mathcal{S}}\|u_{\mathrm{ref}}^n\|_{2,Q}^{2}}\right)^{1/2}, \quad
  L^\infty=\max_{n\in\mathcal{S}}\|u_Q^n-u_{\mathrm{ref}}^n\|_{\infty}.
\end{equation}
Here, $u_Q^n$ and $u_{\mathrm{ref}}^n$ denote the CARE-SAV and reference solutions at $t_n$ respectively, and $\mathcal{S}$ is the set of recorded time indices.

\paragraph{Implementation Details}
Experiments use Python 3.10 with NumPy/SciPy in double precision on a 14-core Intel Xeon Gold 6430 CPU with 50 GB memory, using random seed 1234.

\subsection{Allen-Cahn Equation}
We first consider the one dimensional Allen-Cahn equation on $\Omega = (-1,1)$ and $t\in[0,1]$ as follows:
\begin{equation}
  u_t-\varepsilon^2u_{xx}+5u^3-5u=0,\quad u(-1,t)=u(1,t),\quad u(x,0)=x^2\cos(\pi x).
\end{equation}
We consider several values of $\varepsilon$ below; as $\varepsilon$ decreases, the diffuse interface becomes progressively thinner and the solution develops sharper spatial gradients, making the approximation increasingly challenging. Tab.~\ref{tab:ac_accuracy} demonstrates that CARE-SAV achieves high accuracy and computational efficiency. Fig.~\ref{fig:ac_accuracy} focuses on the most challenging case, $\varepsilon=10^{-5}$, and shows close agreement between the reference solution and the CARE-SAV approximation.

Besides, Fig.~\ref{fig:ac_energy}  shows the evolution of the physical energy and the shifted modified SAV energy. Both quantities decrease monotonically throughout the simulation, confirming the energy-dissipative behavior of the CARE-SAV scheme. Moreover, the two curves are nearly indistinguishable, indicating that the auxiliary variable remains highly consistent with the original nonlinear energy.

\begin{table}[htbp]
\centering
\caption{Accuracy and computational cost for the Allen-Cahn equation with different values of $\varepsilon$.}
\label{tab:ac_accuracy}
\small
\setlength{\tabcolsep}{10pt}
\renewcommand{\arraystretch}{1.15}
\begin{tabular}{cccc}
\toprule
$\varepsilon$
& Rel.\ $L^2$
& $L^\infty$
& CPU time (s)
\\
\midrule
$10^{-2}$
& 1.6949e-05
& 1.1294e-03
& 16.489
\\
$10^{-3}$
& 3.6207e-05
& 1.2466e-03
& 17.423
\\
$10^{-4}$
& 4.3234e-05
& 1.2679e-03
& 16.952
\\
$10^{-5}$
& 4.5126e-05
& 1.2694e-03
& 17.270
\\
\bottomrule
\end{tabular}
\end{table}

\begin{figure}[htbp]
  \centering
  \includegraphics[width=0.9\textwidth]{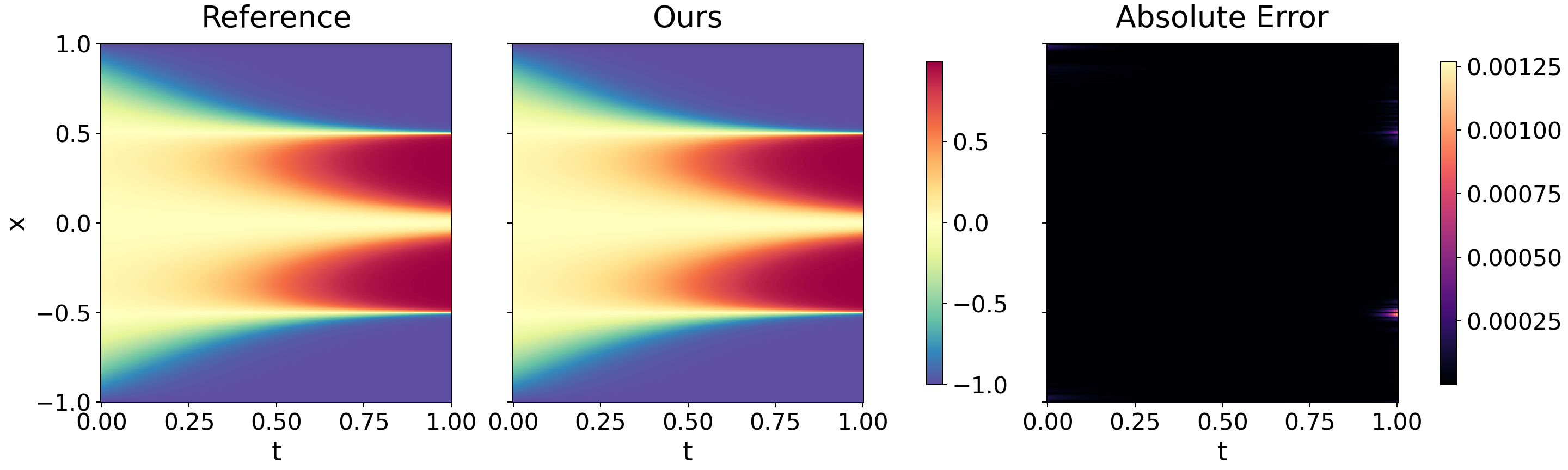}
  \caption{Reference solutions, CARE-SAV approximations, and pointwise absolute errors for the Allen-Cahn equation with $\varepsilon = 10^{-5}$.}
  \label{fig:ac_accuracy}
\end{figure}

\begin{figure}[htbp]
  \centering
  \includegraphics[width=0.9\textwidth]{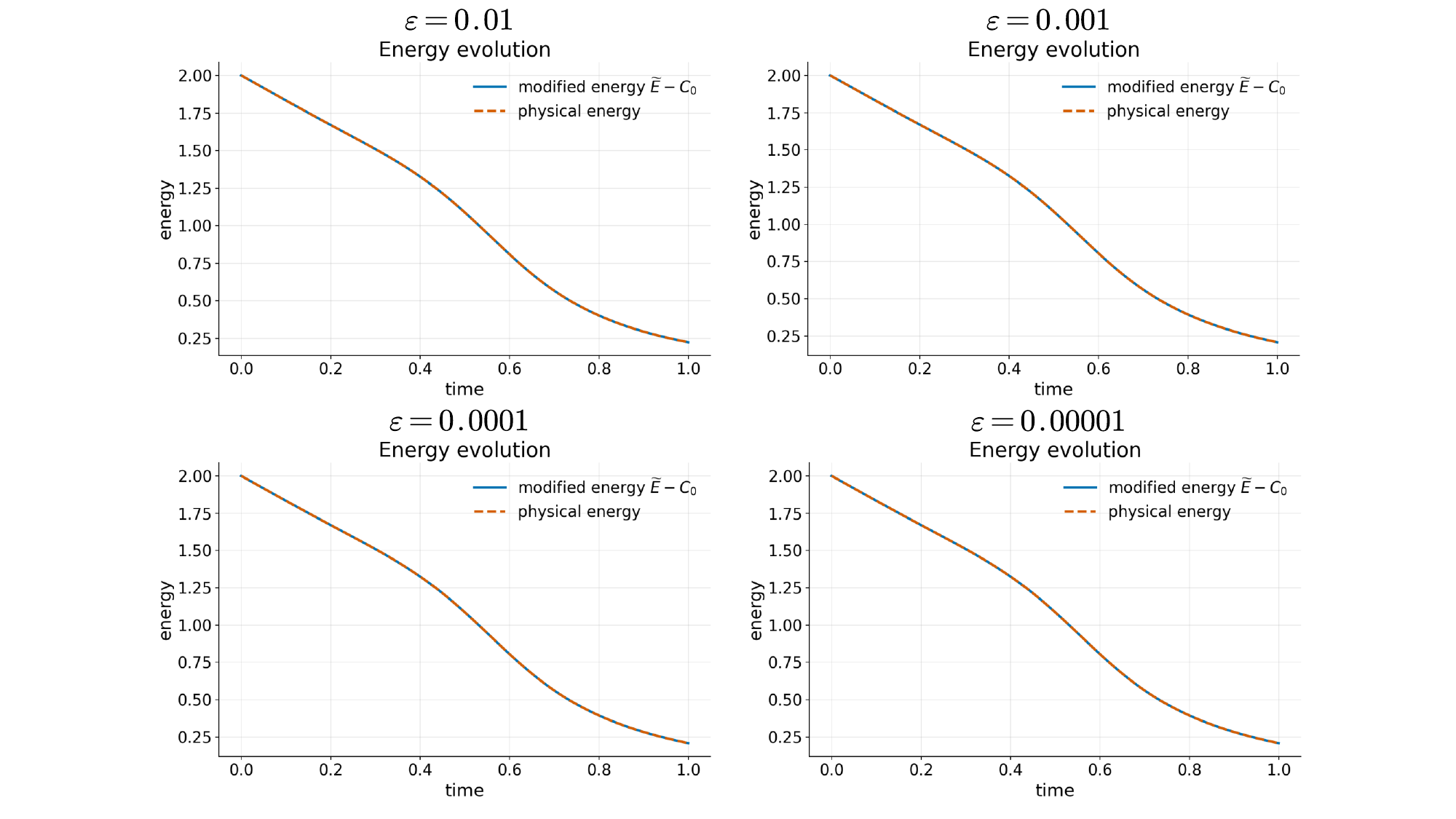}
  \caption{Energy evolution for the Allen-Cahn equation with different values of $\varepsilon$.}
  \label{fig:ac_energy}
\end{figure}

\subsection{Cahn-Hilliard Equation}
We next consider the two dimensional Cahn-Hilliard equation on the periodic domain $\Omega = [-1,1)^2$. Introducing the chemical potential $\mu$, the governing system is given by
\begin{equation}
u_t=\Delta\mu,\quad
\mu=-\varepsilon^2\Delta u+u^3-u, \quad t\in(0,T],
\end{equation}
where $T = 20 \varepsilon ^2$. Periodic boundary conditions are imposed in both spatial directions. The initial condition is chosen as a randomly perturbed field $u(x,y,0)=0.25+0.4 \mathrm{Rand}(x,y)$, where $\mathrm{Rand}(x,y)$ is a random number uniformly distributed in $[-1,1]$. This configuration produces spinodal decomposition followed by the gradual coarsening of phase-separated domains.

The corresponding free-energy functional is
\begin{equation}
E(u)=\int_{\Omega}\left[\frac{\varepsilon^2}{2}|\nabla u|^2+\frac{1}{4}(u^2-1)^2\right]\mathrm{d}\boldsymbol{x}.
\end{equation}
The Cahn-Hilliard dynamics conserve the total mass while dissipating the free energy:
\begin{equation}
\frac{\mathrm{d}}{\mathrm{d}t}\int_{\Omega}u\,\mathrm{d}\boldsymbol{x}=0,
\quad
\frac{\mathrm{d}E}{\mathrm{d}t}=-\int_{\Omega}|\nabla\mu|^2\,\mathrm{d}\boldsymbol{x} \le 0.
\end{equation}

Tab.~\ref{tab:ch_accuracy} demonstrates that CARE-SAV achieves high accuracy and computational efficiency. Fig.~\ref{fig:ch_accuracy} focuses on the most challenging case, $\varepsilon=0.1$, and shows close agreement between the reference solution and the CARE-SAV approximation at $t=T$.

Fig.~\ref{fig:ch_energy} presents the energy behavior of CARE-SAV for the Cahn-Hilliard equation. Both the physical energy and the modified SAV energy decrease monotonically throughout the simulation. The two energy curves are visually indistinguishable, demonstrating excellent consistency between the auxiliary variable and the original nonlinear energy.

\begin{table}[htbp]
\centering
\caption{Accuracy and computational cost for the Cahn-Hilliard equation with different values of $\varepsilon$.}
\label{tab:ch_accuracy}
\small
\setlength{\tabcolsep}{10pt}
\renewcommand{\arraystretch}{1.15}
\begin{tabular}{cccc}
\toprule
$\varepsilon$
& Rel.\ $L^2$
& $L^\infty$
& CPU time (s)
\\
\midrule
0.1
& 3.7604e-07
& 1.0079e-06
& 85.892
\\
0.2
& 1.1951e-07
& 3.2017e-07
& 83.862
\\
0.5
& 1.5108e-07
& 1.6376e-07
& 98.755
\\

\bottomrule
\end{tabular}
\end{table}

\begin{figure}[htbp]
  \centering
  \includegraphics[width=0.9\textwidth]{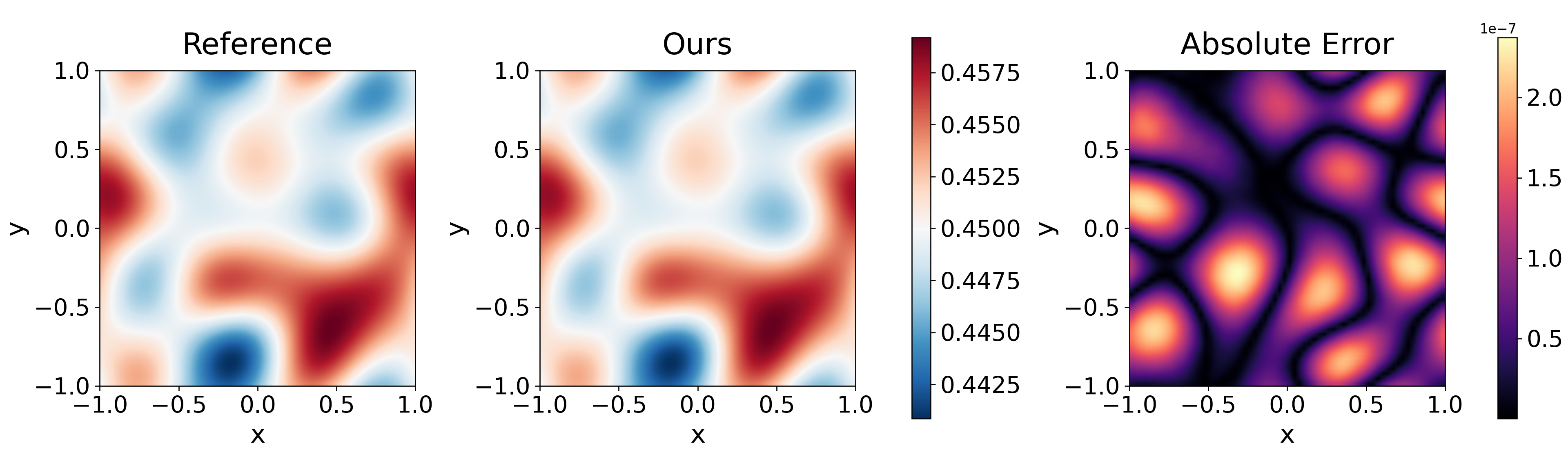}
  \caption{Reference solutions, CARE-SAV approximations, and pointwise absolute errors for the Cahn-Hilliard equation with $\varepsilon = 0.1$ when $t=T$.}
  \label{fig:ch_accuracy}
\end{figure}

\begin{figure}[htbp]
  \centering
  \includegraphics[width=1\textwidth]{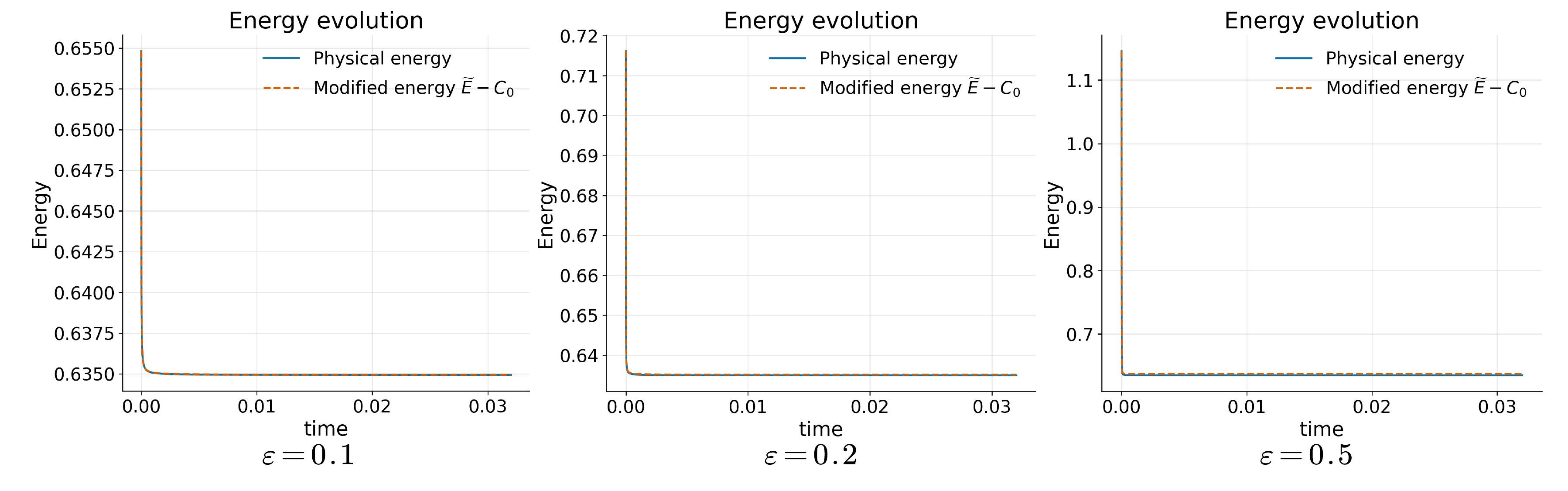}
  \caption{Energy evolution for the Cahn-Hilliard equation with different values of $\varepsilon$.}
  \label{fig:ch_energy}
\end{figure}

\subsection{Phase-Field Crystal Model}

We next consider the dimensionless two-dimensional phase-field crystal (PFC) model on the periodic domain $\Omega = [0,32]^2$ and $T=10$. The governing equations are
\begin{equation}
  u_t=\Delta\mu,
  \quad
  \mu=\left[r+(\Delta+1)^2\right]u+u^3,
  \quad
  (x,y)\in\Omega,\quad t\in(0,T],
\end{equation}
with the initial condition
\begin{equation}
  u(x,y,0)=0.5\sin\left(\frac{2\pi x}{32}\right)\sin\left(\frac{2\pi y}{32}\right).
\end{equation}
This conserved sixth-order gradient-flow problem provides a more challenging test of the ability of CARE-SAV to represent high-order spatial operators while preserving the energy-dissipative structure.The corresponding free-energy functional is
\begin{equation}
  E(u)=\int_{\Omega}\left[\frac{1}{2}u\left(r+(\Delta+1)^2\right)u+\frac{1}{4}u^4\right]\mathrm{d}\boldsymbol{x}.
\end{equation}

The parameter $r$ is a dimensionless control parameter commonly associated with the reduced temperature or degree of undercooling in the PFC model. For $r<0$, spatial modes near the preferred wave number become linearly unstable, and decreasing $r$ further strengthens this instability and promotes more pronounced crystalline structures. Therefore, more negative values of $r$ generally lead to stronger spatial variations and a more challenging approximation problem.

Tab.~\ref{tab:pfc_accuracy} demonstrates that CARE-SAV achieves high accuracy and computational efficiency. Fig.~\ref{fig:pfc_accuracy} focuses on the most challenging case, $r=-0.5$, and shows that the method accurately captures the corresponding spatial structures at $t=T$.

\begin{table}[htbp]
\centering
\caption{Accuracy and computational cost for the PFC with different values of $r$.}
\label{tab:pfc_accuracy}
\small
\setlength{\tabcolsep}{10pt}
\renewcommand{\arraystretch}{1.15}
\begin{tabular}{cccc}
\toprule
$r$
& Rel.\ $L^2$
& $L^\infty$
& CPU time (s)
\\
\midrule
-0.1
& 4.1516e-05
& 4.9831e-05
& 135.79
\\
-0.3
& 1.3086e-04
& 2.3999e-04
& 141.61
\\
-0.5
& 5.8187e-04
& 2.1641e-03
& 133.85
\\

\bottomrule
\end{tabular}
\end{table}

\begin{figure}[htbp]
  \centering
  \includegraphics[width=0.9\textwidth]{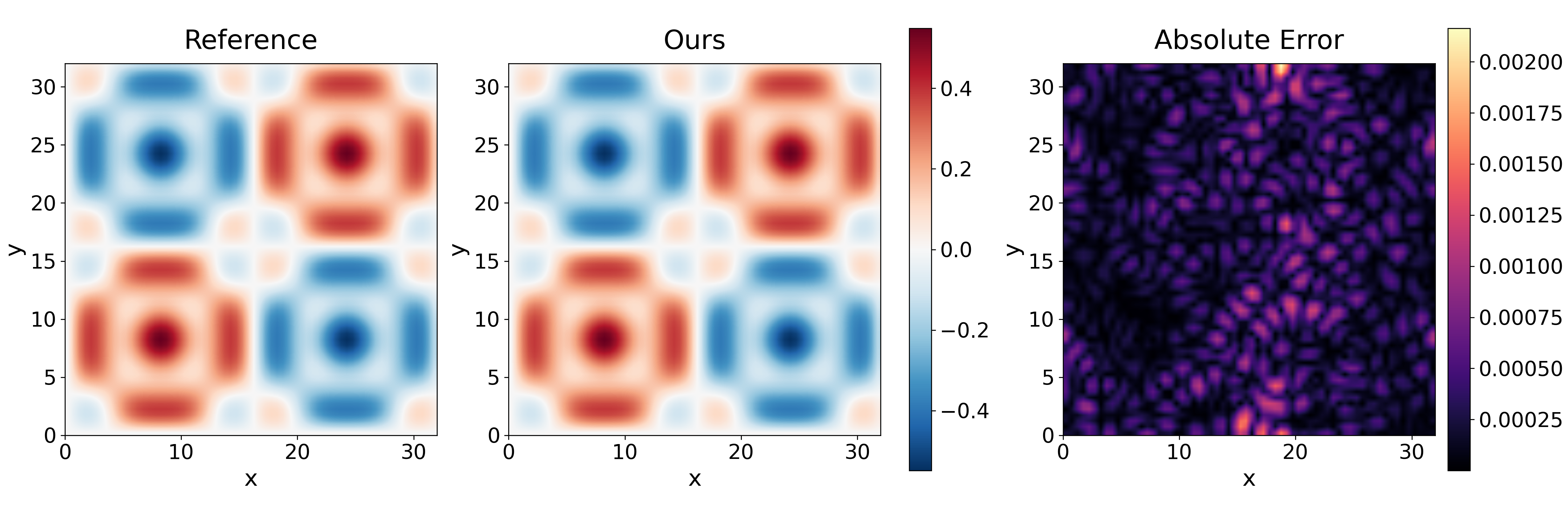}
  \caption{Reference solutions, CARE-SAV approximations and pointwise absolute errors for the PFC with $r = -0.5$ when $t=T$.}
  \label{fig:pfc_accuracy}
\end{figure}

Moreover, Fig.~\ref{fig:pfc_en} shows the evolution of the physical energy and the modified SAV energy for different values of $r$. Both energies decrease monotonically over time, confirming the energy-dissipative nature of the CARE-SAV scheme. The two curves are nearly indistinguishable, indicating that the auxiliary variable closely tracks the original nonlinear energy throughout the simulation.

\begin{figure}[htbp]
  \centering
  \includegraphics[width=1\textwidth]{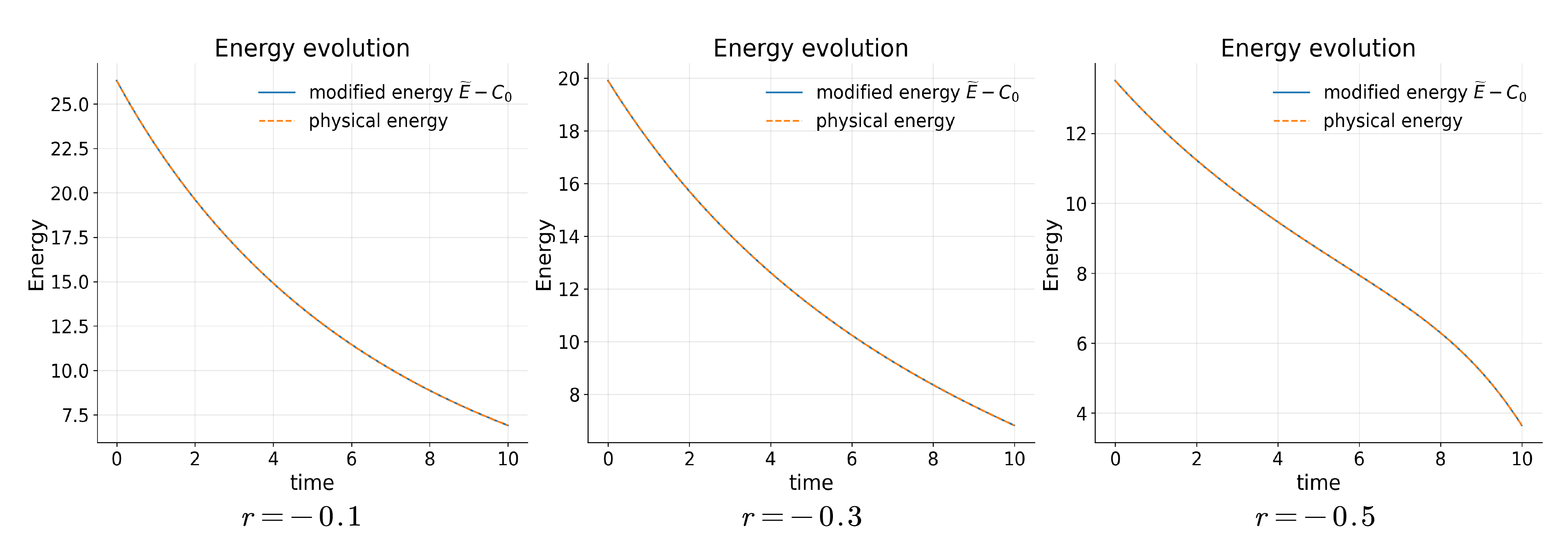}
  \caption{Energy evolution for the PFC with different values of $r$.}
  \label{fig:pfc_en}
\end{figure}

\subsection{Molecular Beam Epitaxy Model}
We next consider the two-dimensional molecular beam epitaxy (MBE) model
with slope selection on the periodic domain
$\Omega=[0,2\pi)^2$. To evaluate the numerical accuracy of CARE-SAV,
we introduce a forcing term and employ the method of manufactured
solutions. The governing equation is
\begin{equation}
  u_t=-\varepsilon^2\Delta^2u-\Delta u+\nabla\cdot\left(|\nabla u|^2\nabla u\right)+f(x,y,t), \quad(x,y,t)\in\Omega\times(0,T],
\end{equation}
with initial condition $u(x,y,0)=A\sin x\sin y$. Periodic boundary conditions are imposed in both spatial directions. The source term is chosen such that the exact solution is
\begin{equation}
  u(x,y,t)=A\cos t\sin x\sin y.
\end{equation}

Specifically,
\begin{equation}
\begin{aligned}
f(x,y,t)={}&-A\bigl[\sin t+(2-4\varepsilon^2)\cos t\bigr]\sin x\sin y \\
&+\frac{A^3\cos^3 t}{4}\Bigl[5\sin x\sin y+\sin x\sin(3y) \\
&\qquad\quad{}+\sin(3x)\sin y-3\sin(3x)\sin(3y)\Bigr].
\end{aligned}
\end{equation}
In the numerical experiment, we set
$A=0.1, \varepsilon^2=0.1, T=1$.
This smooth manufactured solution provides a direct test of the ability of CARE-SAV to approximate the fourth-order linear operator and the nonlinear gradient term appearing in the MBE model.

Tab.~\ref{tab:mbe_accuracy} demonstrates that CARE-SAV achieves high accuracy and computational efficiency. Fig.~\ref{fig:mbe_acc} compares the reference solution and CARE-SAV approximation and displays the energy evolution, confirming accurate resolution of the MBE dynamics.

\begin{table}[htbp]
\centering
\caption{Accuracy and computational cost for the MBE.}
\label{tab:mbe_accuracy}
\small
\setlength{\tabcolsep}{10pt}
\renewcommand{\arraystretch}{1.15}
\begin{tabular}{ccc}
\toprule

Rel.\ $L^2$
& $L^\infty$
& CPU time (s)
\\
\midrule

6.0786e-08
& 1.0722e-08
& 106.26

\\

\bottomrule
\end{tabular}
\end{table}

\begin{figure}[htbp]
  \centering
  \includegraphics[width=1\textwidth]{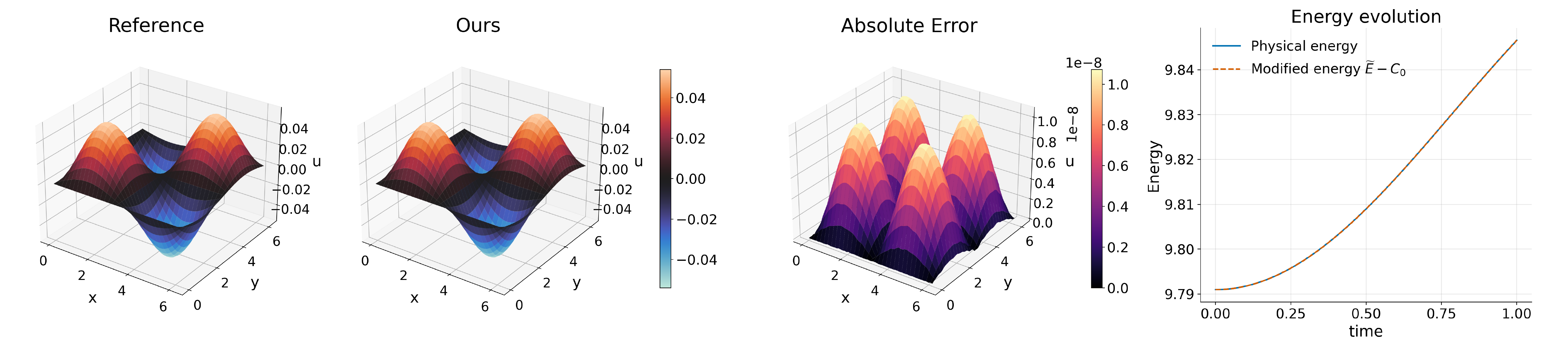}
  \caption{Reference solutions, CARE-SAV approximations and pointwise absolute errors for the MBE, 3D visualization when $t=T$ and the energy evolution.}
  \label{fig:mbe_acc}
\end{figure}

\begin{remark}
  The forcing term $f(x,y,t)$ is introduced solely to construct the prescribed exact solution. Consequently, the resulting problem is no longer an unforced gradient flow, and its physical energy is not necessarily monotone. Therefore, this example is used primarily to assess numerical accuracy rather than energy dissipation.
\end{remark}

\subsection{Grain Growth Model}
We finally consider the multi-order-parameter grain growth model of Fan and Chen \cite{FAN1997611} on the periodic domain $\Omega=[0,2\pi)^2$. With $m$ nonconserved order parameters $\boldsymbol{\eta}= (\eta_1,\cdots,\eta_m)$, the model is the $L^2$ gradient flow
\begin{equation}
  \partial_t\eta_i=-L\frac{\delta E}{\delta\eta_i},\quad i=1,\cdots,m,
\end{equation}
associated with
\begin{equation}
  E(\boldsymbol{\eta})=\int_{\Omega}\left[\frac{\kappa}{2}\sum_{i=1}^{m}|\nabla\eta_i|^2 + f_{\mathrm{bulk}}(\boldsymbol{\eta})\right]\mathrm{d}\boldsymbol{x},
\end{equation}
where
\begin{equation}
f_{\mathrm{bulk}}(\boldsymbol{\eta})=-\frac{a}{2}\sum_{i=1}^{m}\eta_i^2 + \frac{b}{4}\sum_{i=1}^{m}\eta_i^4 + \gamma \sum_{1\leq i<j\leq m} \eta_i^2\eta_j^2,
\end{equation}
and periodic boundary conditions are imposed in both spatial directions. The model satisfies
\begin{equation}
\frac{\mathrm{d}E}{\mathrm{d}t} =-\frac{1}{L} \sum_{i=1}^{m}\left\|\partial_t\eta_i \right\|_{L^2(\Omega)}^2 \leq 0.
\end{equation}

Fig.~\ref{fig:grain_eta_max} shows the evolution of $\eta_{\max}(\boldsymbol{x},t)=\max_{1\leq i\leq m}\eta_i(\boldsymbol{x},t)$. Large values correspond to grain interiors, while low-value regions identify the grain boundaries. As time evolves, the boundaries migrate and become smoother, with small grains shrinking and larger grains expanding. The results demonstrate that CARE-SAV captures the characteristic grain-coarsening dynamics driven by free-energy dissipation.

\begin{figure}[htbp]
  \centering
  \includegraphics[width=0.8\textwidth]{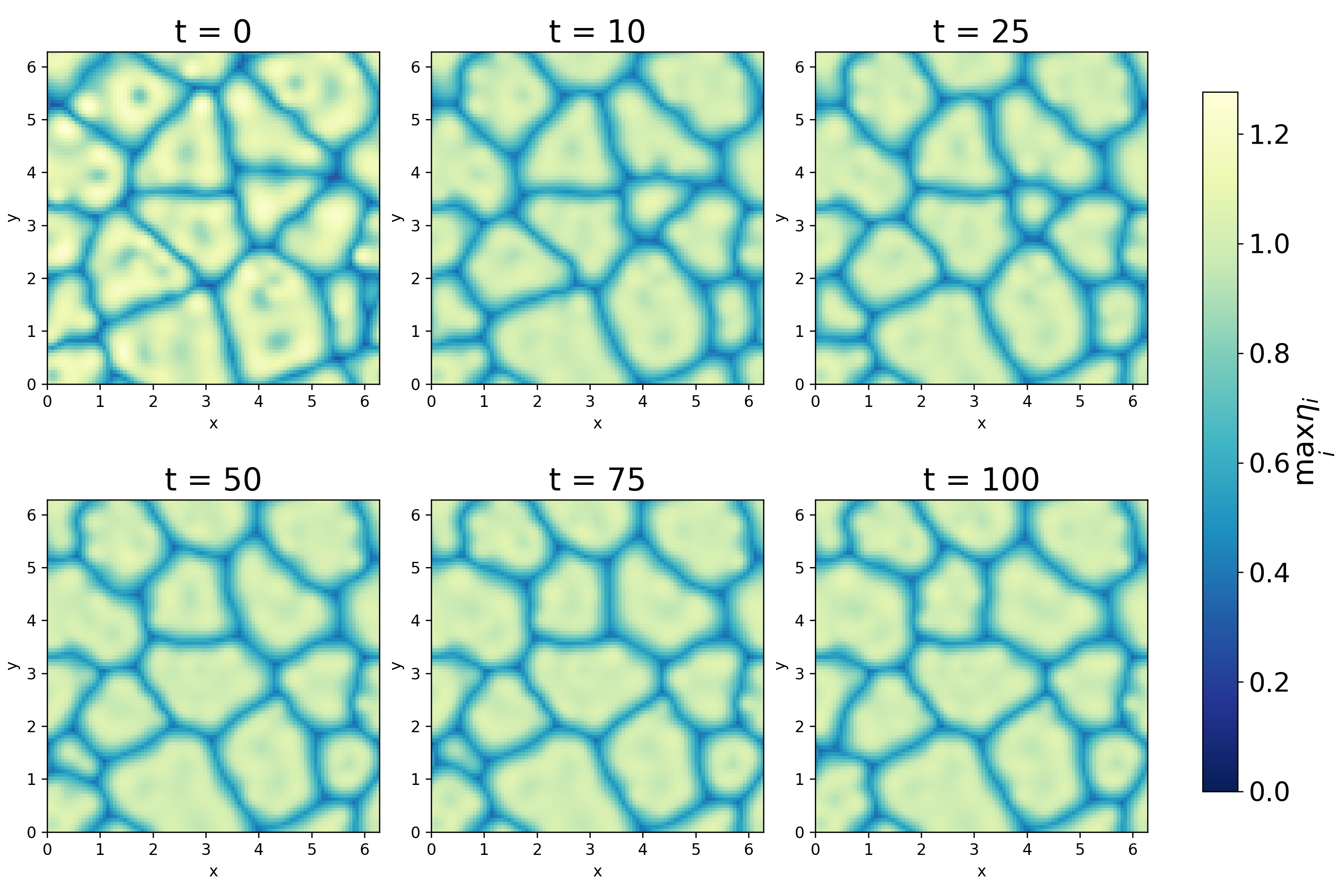}
  \caption{Evolution of the maximum order parameter $\eta_{\max}(\boldsymbol{x},t)=\max_{1\leq i\leq m}\eta_i(\boldsymbol{x},t)$ for the grain-growth model at different times.}
  \label{fig:grain_eta_max}
\end{figure}

Fig.~\ref{fig:grain_statistics} summarizes the grain-growth dynamics through three complementary diagnostics. The active grain count decreases stepwise from 16 to 13, indicating the gradual disappearance of unfavorable grains during the coarsening process. Meanwhile, both the physical energy and the modified SAV energy decrease monotonically and remain visually indistinguishable throughout the simulation, confirming the energy-dissipative behavior and the consistency of the auxiliary variable. The normalized grain-area distribution, expressed in terms of $A/\bar{A}$, also evolves over time as the grain structure reorganizes. These results are consistent with the progressive elimination of small grains and the growth of the remaining grains.

\begin{figure}[htbp]
  \centering
  \includegraphics[width=1\textwidth]{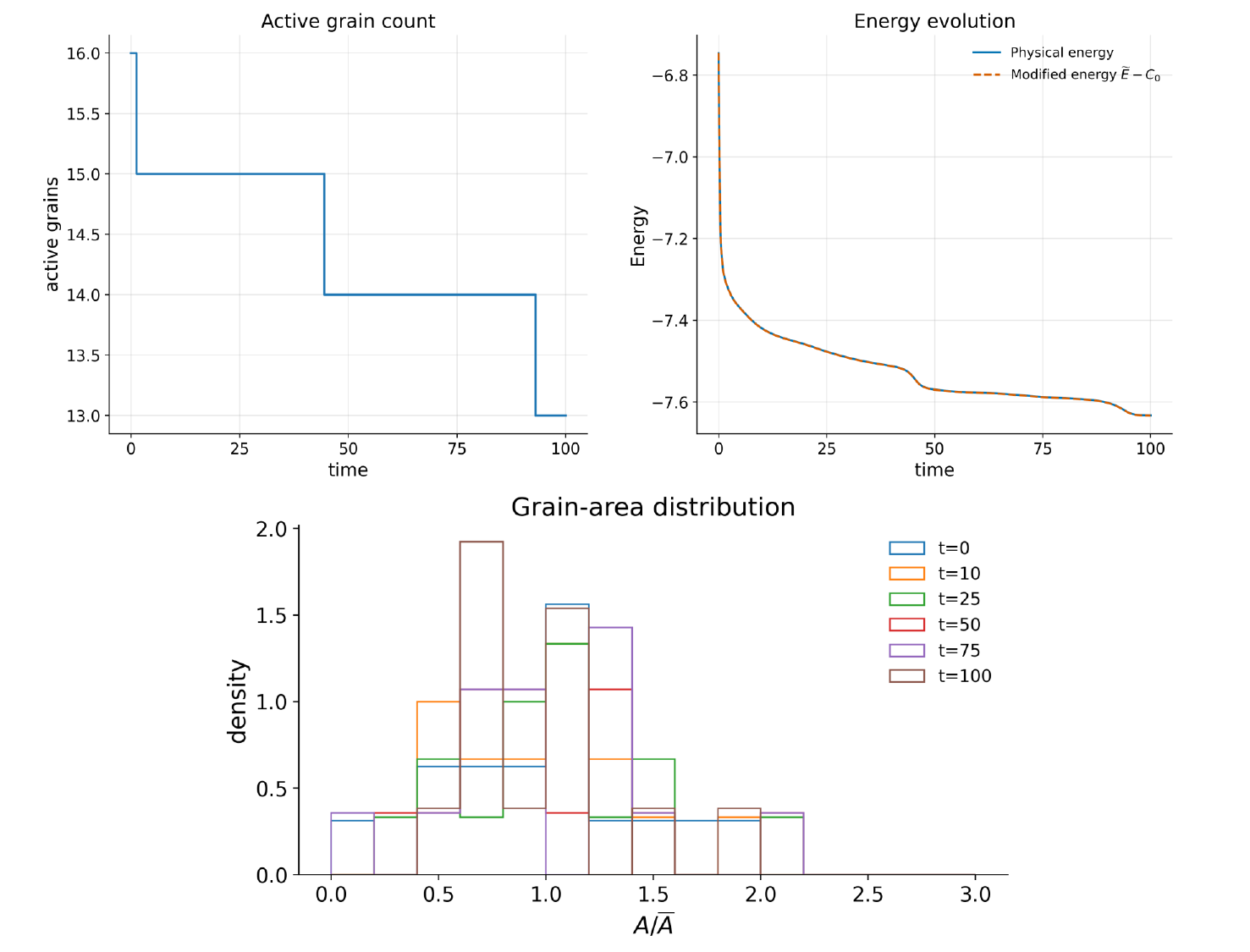}
  \caption{Evolution of the active grain count, the physical and modified SAV energies and the normalized grain-area distribution for the grain growth model.}
  \label{fig:grain_statistics}
\end{figure}

\subsection{Parameter Study}
The influence of representation parameters on CARE-SAV is examined through the choice of feature family, the number of candidate features, and the rank-revealing QR tolerance. To keep the study focused, the analysis is restricted to the Allen-Cahn and Cahn-Hilliard equations, which serve as representative nonconserved and conserved gradient flows, respectively.

\paragraph{Feature-Family Comparison}
To assess the influence of the feature family, we compare the Gaussian RBF representation adopted in CARE-SAV with Tanh-ELM \cite{CALABRO2021}, Fourier-ELM \cite{ren2025GFF} and IMQ-RBF \cite{NASSAJIANMOJARRAD2023151}. For a fair comparison, all numerical settings other than the feature family are kept identical. The relative $L^2$ error is used as the sole evaluation metric, thereby isolating the effect of different feature constructions on approximation accuracy. Tab.~\ref{tab:feature_family_comparison} shows that Ours achieves the lowest relative $L^2$ error in all tested cases.

\begin{table}[htbp]
\centering
\caption{Comparison of relative $L^2$ errors obtained using different
feature families under identical numerical settings, Promotion denotes the percentage reduction in relative $L^2$ error achieved by Ours compared with the second best method.}
\label{tab:feature_family_comparison}
\small
\setlength{\tabcolsep}{6pt}
\renewcommand{\arraystretch}{1.15}
\resizebox{\linewidth}{!}{%
\begin{tabular}{c|c|ccc|cc}
\toprule
Equation
& $\varepsilon$
& Tanh-ELM
& Fourier-ELM
& IMQ-RBF
& \textbf{Ours}
& Promotion
\\
\midrule

\multirow{4}{*}{Allen-Cahn}
& $10^{-2}$
& 3.48e-02
& 8.18e-02
& \underline{2.51e-04}
& \textbf{1.69e-05}
& 93.27\%
\\
& $10^{-3}$
& 4.51e-02
& 9.12e-02
& \underline{1.39e-04}
& \textbf{3.62e-05}
& 73.96\%
\\
& $10^{-4}$
& 4.53e-02
& 9.11e-02
& \underline{1.51e-04}
& \textbf{4.32e-05}
& 71.39\%
\\
& $10^{-5}$
& 4.53e-02
& 9.23e-02
& \underline{1.56e-04}
& \textbf{4.51e-05}
& 71.09\%
\\

\midrule

\multirow{3}{*}{Cahn-Hilliard}
& $0.1$
& 4.86e-03
& 1.49e-02
& \underline{4.86e-03}
& \textbf{1.51e-07}
& 99.99\%
\\
& $0.2$
& 2.83e-03
& 2.57e-02
& \underline{2.83e-03}
& \textbf{1.19e-07}
& 99.99\%
\\
& $0.5$
& 2.03e-03
& 4.31e-02
& \underline{2.03e-03}
& \textbf{3.76e-07}
& 99.98\%
\\

\bottomrule
\end{tabular}
}
\end{table}

\paragraph{Parameter Sensitivity}
The sensitivity studies in \cref{sec:SM-parameter-sensitivity} show that increasing the number of candidate features improves accuracy with diminishing returns once the CARE space is sufficiently rich. Moreover, the accuracy remains stable over the tested range of rank-revealing QR tolerances, indicating that CARE-SAV can remove numerically redundant feature directions without substantial loss of accuracy.

\section{Discussions and Conclusions}
Overall, the results presented in this work suggest that flexible feature representations need not be separated from classical structure-preserving discretization principles. Once the generated feature space is treated as a genuine Galerkin space, the dissipative structure of gradient flows can be retained within a nontraditional spatial representation rather than being tied to a predetermined mesh or basis. This perspective indicates that representation flexibility and rigorous numerical structure can coexist within the same discretization framework, providing a possible route toward more adaptable structure-preserving solvers for dissipative PDEs.

On the other hand, several questions remain open beyond the present study. In particular, the construction of the CARE space may be further adapted to the evolving solution, allowing the spatial representation to respond dynamically to changes in solution structure. Extending the same variational perspective to broader classes of dissipative PDEs also provides a natural direction for future investigation.

Moreover, a further direction is to investigate whether the feature-space perspective developed here can support reduced or data-driven descriptions of dissipative PDE dynamics. In particular, understanding how structural information can be retained in finite-dimensional dynamical representations may provide a useful basis for future extensions beyond direct time integration.

\appendix

\section{Proof of Proposition~\ref{prop:well-posedness}}
\label{pf:prop:well-posedness}
\begin{proof}
  Suppose $\boldsymbol{A}_K\boldsymbol x=0$. Then
  $$
    \boldsymbol x=\frac{\Delta t}{2}\boldsymbol{G}_K\boldsymbol{L}_K\boldsymbol x.
  $$
  Taking the inner product with $\boldsymbol{L}_K\boldsymbol x$ gives
  $$
    \boldsymbol x^{T}\boldsymbol{L}_K\boldsymbol x = \frac{\Delta t}{2}(\boldsymbol{L}_K\boldsymbol x)^{T} \boldsymbol{G}_K(\boldsymbol{L}_K\boldsymbol x)\le0.
  $$
  Since the left-hand side is nonnegative, $\boldsymbol{L}_K\boldsymbol x=0$, and the defining equation then yields $\boldsymbol x=0$. Thus $\boldsymbol{A}_K$ is nonsingular. Set
  $$
    \boldsymbol z=\boldsymbol b^{n+1/2}+\frac{\Delta t}{2}\boldsymbol{L}_K\boldsymbol q^{n+1/2}.
  $$
  The equation for $\boldsymbol q^{n+1/2}$ is equivalent to $\boldsymbol q^{n+1/2}=\boldsymbol{G}_K\boldsymbol z$. Therefore we have
  \begin{equation}
  \label{eq:SM1_prop1}
    \begin{aligned}
    (\boldsymbol b^{n+1/2})^{T}\boldsymbol q^{n+1/2} &=\boldsymbol z^{T}\boldsymbol q^{n+1/2} -\frac{\Delta t}{2}(\boldsymbol q^{n+1/2})^{T} \boldsymbol{L}_K\boldsymbol q^{n+1/2}\\
    &=\boldsymbol z^{T}\boldsymbol{G}_K\boldsymbol z -\frac{\Delta t}{2}(\boldsymbol q^{n+1/2})^{T} \boldsymbol{L}_K \boldsymbol q^{n+1/2} \le 0.
    \end{aligned}
  \end{equation}
  Thus Prop.~\ref{prop:well-posedness} is proved.
\end{proof}

\section{Proof of Proposition~\ref{prop:Conditioning of the CARE-SAV formulation}}
\label{sec:pf:prop:conditioning}
\begin{proof}
  The first identity follows directly from the discrete orthonormality of the CARE basis, $\Psi^{T} W\Psi=\boldsymbol{I}_K$, and hence
  \begin{equation}
    \kappa_2(\boldsymbol{M}_K^Q)=1.
  \end{equation}
  For a numerically computed basis $\widetilde\Psi$ the assumption $\delta_Q:=\|\widetilde\Psi^{T} W\widetilde\Psi-\boldsymbol{I}_K\|_2<1$ implies that all eigenvalues of $\widetilde\Psi^{T} W\widetilde\Psi$ lie in the interval $[1-\delta_Q,1+\delta_Q]$. Therefore,
  \begin{equation}
    \kappa_2(\widetilde\Psi^{T} W\widetilde\Psi) \le \frac{1+\delta_Q}{1-\delta_Q}.
  \end{equation}
  Finally, recall that Eq.~\eqref{eq:SM1_prop1} holds, where the last inequality follows from the dissipativity of $\boldsymbol{G}_K$ and the positive semidefiniteness of $\boldsymbol{L}_K$. Consequently,
  \begin{equation}
    D_n = 1-\frac{\Delta t}{4} (\boldsymbol b^{n+1/2})^{T} \boldsymbol q^{n+1/2} \ge 1.
  \end{equation}
\end{proof}

\section{Proof of Lemma~\ref{lem:CARE truncation estimate}}
\label{sec:pf:lem:CARE truncation estimate}
\begin{proof}
  The weighted nodal vectors of selected CARE functions span $\operatorname{range}(Q_K)$. Hence the best nodal approximation to $A_{M,N}\boldsymbol a$ from this range is its orthogonal projection $Q_KQ_K^{T} A_{M,N}\boldsymbol a$, and
  $$
  \inf_{v_K\in V_K^{\mathrm{CARE}}}\|v_M-v_K\|_{Q,N}=\|(I_N-Q_KQ_K^{T})A_{M,N}\boldsymbol a\|_2.
  $$
  The spectral-norm estimate gives Eq.~\eqref{eq:main1_lem1}. Since $v_M-v_K\in V_M^{\mathrm{RF}}$, the definition of $\Lambda_M(X)$, the triangle inequality, and minimization over $\boldsymbol a$ give Eq.~\eqref{eq:main2_lem1}.
\end{proof}

\section{Proof of Lemma~\ref{lem:almost-sure density at infinite width}}
\label{sec:pf:lem:almost-sure density at infinite width}
\begin{proof}
  Because $\Theta$ is a separable compact metric space, it has a countable topological base $\{O_j\}_{j\ge1}$. Full support gives $p_j=\mathbb P\{\theta_1\in O_j\}>0$, and
  $$\mathbb P\{\theta_1,\cdots,\theta_M\notin O_j\}=(1-p_j)^M \longrightarrow 0.$$
  Taking the countable intersection shows that the sampled parameter sequence is almost surely dense in $\Theta$.

  Fix any $\sigma_0\in(\sigma_{\min},\sigma_{\max})$. In the nonperiodic setting, the universal approximation result for fixed-width Gaussian radial basis functions \cite{PARKSANDBERG1991246} shows that finite linear combinations of the ordinary Gaussian RBFs are dense in $C(\Omega)$. The parameter-to-feature map is continuous from $\Theta$ to $C(\Omega)$. Almost-sure parameter density therefore allows every deterministic Gaussian section, and hence every finite combination of such sections, to be approximated by sampled features. This proves $\overline{\bigcup_{M\ge1}V_M^{\mathrm{RF}}}^{\,C(\Omega)}=C(\Omega)$ almost surely. Density in $L^2(\Omega)$ follows from the density of continuous functions in $L^2(\Omega)$.

  For a periodic rectangular cell, identify opposite faces and write the domain as the torus $\mathbb T_{\boldsymbol L}^d$. The periodized fixed-width Gaussian
  $$
  g_{\sigma_0}^{\mathrm{per}}(\boldsymbol x)=\sum_{\boldsymbol k\in\mathbb Z^d}\exp\left(-\frac{\|\boldsymbol x+\boldsymbol k\odot\boldsymbol L\|^2}{2\sigma_0^2}\right)
  $$
  has a nonzero Fourier coefficient at every frequency. Consequently, the closed linear span of its translates contains all trigonometric polynomials and is dense in $C(\mathbb T_{\boldsymbol L}^d)=C_{\mathrm{per}}(\overline\Omega)$. The map $(\boldsymbol c,\sigma)\mapsto g_\sigma^{\mathrm{per}}(\cdot-\boldsymbol c)$ is continuous from $\Theta$ to $C_{\mathrm{per}}(\overline\Omega)$. The same almost-sure parameter-density argument therefore gives
  $$
  \overline{\bigcup_{M\ge1}V_M^{\mathrm{RF}}}^{\,C_{\mathrm{per}}(\overline\Omega)}=C_{\mathrm{per}}(\overline\Omega).
  $$
  Finally, density in $L^2_{\mathrm{per}}(\Omega)$ follows from the density of periodic continuous functions in that space.
\end{proof}

\section{Projected Reference Solution and Consistency Estimates}
\label{sec:Projected Reference Solution and Consistency Estimates}
By using the physical SAV variable as the scalar reference and introducing the projected-energy quantity only for comparison, set
$$
r_*(t):=r(t)=\sqrt{E_1(\phi(t))+C_0},
\quad
\widehat r_K(t):=\sqrt{E_{1,Q}(\phi_K^*(t))+C_0}.
$$
Since the CARE space is fixed in time, $\partial_tP_K^Q\phi=P_K^Q\partial_t\phi$. The physical reference $r(t)$ does not satisfy the reduced scalar equation exactly. Define its nonzero spatial consistency defect by
\begin{equation}\label{eq:SM nonzero spatial consistency}
\begin{aligned}
\xi_r(t)&:=\dot r(t)-\frac12\boldsymbol b_K(\boldsymbol c_*(t))^{T}\dot{\boldsymbol c}_*(t)\\
&=\frac12\left[\frac{(\mathcal U(\phi(t)),\partial_t\phi(t))}{r(t)}-\frac{(\mathcal U(\phi_K^*(t)),\partial_t\phi_K^*(t))_Q}{\widehat r_K(t)}\right].
\end{aligned}
\end{equation}
Thus no zero-SAV-defect assumption is imposed. Treating $r(t_n)-r^n$ as an independent error component and retaining its scalar residual follows the standard SAV error-analysis framework~\cite{SHENXU20182895,Chen2020SAVFEM}. Let $J_Kv=((v,\psi_i)_Q)_{i=1}^K$. Under ideal strong-form quadrature assembly, the two vector defects are
\begin{equation}
  \begin{aligned}
    \boldsymbol\xi_{\mathcal G}(t)&=\dot{\boldsymbol c}_*(t)-\boldsymbol{G}_K\boldsymbol d_*(t)=J_K\mathcal G(I-P_K^Q)\mu(t),\\
    \boldsymbol\xi_{\mathcal L,U}(t)&=\boldsymbol d_*(t)-\boldsymbol{L}_K\boldsymbol c_*(t)-r(t)\boldsymbol b_K(\boldsymbol c_*(t))\\
    &=J_K\left[\mathcal L(I-P_K^Q)\phi(t)+\mathcal U(\phi(t))-\mathcal U(P_K^Q\phi(t))\right]+[\widehat r_K(t)-r(t)]\boldsymbol b_K(\boldsymbol c_*(t)).
  \end{aligned}
\end{equation}

Let $\mathfrak Q_{M,N,K}(T)$ denote the supremum over $0\le t\le T$ of all quadrature and assembly consistency defects, including the differences between continuous and quadrature inner products, between the actual reduced matrices and their structure-preserving weak-form counterparts, and $|E_1(\phi(t))-E_{1,Q}(P_K^Q\phi(t))|$. Introduce graph norms
$$
\|v\|_{X_{\mathcal L}}=\|v\|_{L^2}+\|\mathcal Lv\|_{L^2},
\quad
\|w\|_{X_{\mathcal G}}=\|w\|_{L^2}+\|\mathcal Gw\|_{L^2},
$$
and set
\begin{equation}
  \begin{aligned}
    \mathfrak A_{M,N,K}(T):=\sup_{0\le t\le T}\bigl[&\mathfrak a_{M,N,K}^{X_{\mathcal L}}(\phi(t))+\mathfrak a_{M,N,K}^{X_{\mathcal G}}(\mu(t))
    +\mathfrak a_{M,N,K}^{L^2}(\partial_t\phi(t))\bigr].
  \end{aligned}
\end{equation}
Under $\|P_K^Q v\|_X\le C_{P,K}\|v\|_X$, $\|v_K\|_{L^2}\le C_{\mathrm{eq},K}\|v_K\|_{Q,N}$, boundedness of the exact trajectory factors in Eq.~\eqref{eq:SM nonzero spatial consistency}, local Lipschitz continuity of $\mathcal U$, and Lem.~\ref{lem:CARE truncation estimate},
\begin{equation}
  \sup_{0\le t\le T} \left(\|\boldsymbol\xi_{\mathcal G}(t)\| +\|\boldsymbol\xi_{\mathcal L,U}(t)\|+|\xi_r(t)|\right)\le C\left[\mathfrak A_{M,N,K}(T)+\mathfrak Q_{M,N,K}(T)\right].
\end{equation}

\section{Proof of Lemma~\ref{lem:Fixed-space Perturbation Stability}}
\label{sec:pf:lem:Fixed-space Perturbation Stability}
\begin{proof}
  Here and below, $D_t v^n:=(v^{n+1}-v^n)/\Delta t$. For $n\geq1$, write
  \begin{equation}
    \boldsymbol e_c^n=\boldsymbol c_*^n-\boldsymbol c^n,\qquad e_r^n=r_*^n-r^n,\qquad \boldsymbol e_d^{n+1/2}=\boldsymbol d_*^{n+1/2}-\boldsymbol d^{n+1/2},
  \end{equation}
  and set
  \begin{equation}
    \boldsymbol\beta_n=\boldsymbol b_K\bigl(\overline{\boldsymbol c}_*^{\,n+1/2}\bigr)-\boldsymbol b_K\bigl(\overline{\boldsymbol c}^{\,n+1/2}\bigr),
    \qquad \boldsymbol b=\boldsymbol b_K\bigl(\overline{\boldsymbol c}^{\,n+1/2}\bigr).
  \end{equation}
  Subtracting the numerical CARE-SAV scheme from the residual equations for the reference sequence gives
  \begin{equation}
    \begin{aligned}
      D_t\boldsymbol e_c^n&=\boldsymbol{G}_K\boldsymbol e_d^{n+1/2}+\boldsymbol\rho_{c,n},\\
      \boldsymbol e_d^{n+1/2}&=\boldsymbol{L}_K\boldsymbol e_c^{n+1/2}+e_r^{n+1/2}\boldsymbol b+r_*^{n+1/2}\boldsymbol\beta_n+\boldsymbol\rho_{d,n},\\
      D_te_r^n&=\frac12\boldsymbol b^{T} D_t\boldsymbol e_c^n+\frac12\boldsymbol\beta_n^{T} D_t\boldsymbol c_*^n+\rho_{r,n}.
    \end{aligned}
    \label{eq:SM fixed-space error equations}
  \end{equation}
  By the local Lipschitz continuity of $\boldsymbol b_K$ and the second-order extrapolation,
  \begin{equation}
    \|\boldsymbol\beta_n\|\leq \frac{L_{b,K}}{2}\left(3\|\boldsymbol e_c^n\|+\|\boldsymbol e_c^{n-1}\|\right).
    \label{eq:SM beta estimate}
  \end{equation}
  Define $\chi_n=\frac12\boldsymbol\beta_n^{T} D_t\boldsymbol c_*^n+\rho_{r,n}$. The scalar equation yields
  \begin{equation}
    e_r^{n+1}-e_r^n=\frac12\boldsymbol b^{T}(\boldsymbol e_c^{n+1}-\boldsymbol e_c^n)+\Delta t\,\chi_n,
    \label{eq:SM scalar recovery increment}
  \end{equation}
  and
  \begin{equation}
    e_r^{n+1/2}=e_r^n+\frac14\boldsymbol b^{T}(\boldsymbol e_c^{n+1}-\boldsymbol e_c^n)+\frac{\Delta t}{2}\chi_n.
    \label{eq:SM scalar recovery midpoint}
  \end{equation}
  Let $\delta\boldsymbol e_c^n=\boldsymbol e_c^{n+1}-\boldsymbol e_c^n$. Substitution gives
  \begin{equation}
    \begin{aligned}
      \boldsymbol{K}_n\delta\boldsymbol e_c^n={}&\Delta t\bigl[\boldsymbol{G}_K\boldsymbol{L}_K\boldsymbol e_c^n+\boldsymbol{G}_K\boldsymbol b\,e_r^n+\boldsymbol{G}_K r_*^{n+1/2}\boldsymbol\beta_n+\boldsymbol{G}_K\boldsymbol\rho_{d,n}+\boldsymbol\rho_{c,n}\bigr]+\frac{\Delta t^2}{2}\boldsymbol{G}_K\boldsymbol b\,\chi_n,
    \end{aligned}
    \label{eq:SM fixed-space increment equation}
  \end{equation}
  where
  \begin{equation}
    \boldsymbol{K}_n=\boldsymbol{I}_K-\frac{\Delta t}{2}\boldsymbol{G}_K\boldsymbol{L}_K-\frac{\Delta t}{4}\boldsymbol{G}_K\boldsymbol b\boldsymbol b^{T}.
    \label{eq:SM fixed-space matrix}
  \end{equation}
  Put $C_K=\frac12\|\boldsymbol{G}_K\|\,\|\boldsymbol{L}_K\|+\frac14\|\boldsymbol{G}_K\|B_b^2$. Then $\|\boldsymbol{K}_n-\boldsymbol{I}_K\|\leq C_K\Delta t$. Hence, for $C_K\Delta t\leq1/2$,
  \begin{equation}
    \|\boldsymbol{K}_n^{-1}\|\leq2.
    \label{eq:SM fixed-space inverse bound}
  \end{equation}
  Moreover,
  \begin{equation}
    |\chi_n|\leq\frac{B_t}{2}\|\boldsymbol\beta_n\|+|\rho_{r,n}|.
    \label{eq:SM chi estimate}
  \end{equation}
  Applying these bounds gives, with $C_K$ independent of $n$ and $\Delta t$,
  \begin{equation}
    \|\delta\boldsymbol e_c^n\|\leq C_K\Delta t\bigl(E_n+E_{n-1}+\mathcal K_n\bigr),
    \label{eq:SM coefficient increment bound}
  \end{equation}
  and
  \begin{equation}
    |e_r^{n+1}|\leq |e_r^n|+C_K\Delta t\bigl(E_n+E_{n-1}+\mathcal K_n\bigr).
    \label{eq:SM scalar increment bound}
  \end{equation}
  Combining these inequalities proves the stated recurrence. At the starting step, the same argument supplies the required one-step initialization estimate; equivalently, one may set $E_{-1}=E_0$. Finally, the one-step discrete Gronwall inequality completes the proof.
\end{proof}

\section{Experimental Setup}
\label{sec:SM-experimental-setup}

Unless otherwise specified, all numerical experiments are implemented in Python using double precision arithmetic. The same CARE basis-construction procedure and SAV-CN time discretization are employed throughout the experiments. Problem-dependent physical and numerical parameters are summarized in \cref{tab:baseline_parameters}. Any settings that differ from those listed in the table are stated explicitly in the corresponding subsection of the main text.

Table~\ref{tab:baseline_parameters} lists the configurations. Here, $M$ denotes the number of initially generated RBF features, $[\sigma_{\min},\sigma_{\max}]$ specifies the range of RBF widths, and $\tau_{\mathrm{QR}}$ is the tolerance used in the rank-revealing QR procedure. The quantity $K$ denotes the resulting effective dimension of the CARE space after numerically redundant feature directions have been removed. The time-step size and the total number of time steps are denoted by $\Delta t$ and $N_t$, respectively. These configurations are used as the default settings in the benchmark experiments.

\begin{table}[htbp]
\centering
\caption{Configurations of CARE-SAV for the five benchmark problems.}
\label{tab:baseline_parameters}
\small
\setlength{\tabcolsep}{5pt}
\renewcommand{\arraystretch}{1.15}
\begin{tabular}{lccccc}
\hline
Problem
& $M$
& $[\sigma_{\min},\sigma_{\max}]$
& $\tau_{\mathrm{QR}}$
& $\Delta t$
& $N_t$
\\
\hline
Allen-Cahn
& $480$
& $[0.008,0.08]$
& $1\times10^{-12}$
& $5\times10^{-5}$
& $20000$
\\
Cahn-Hilliard
& 900
& $[0.08,0.5]$
& $1\times10^{-12}$
& $6.4\times10^{-6}$
& $5000$
\\
Phase-Field Crystal
& 500
& $[0.1,0.6]$
& $1\times10^{-12}$
& $1\times10^{-3}$
& 9980
\\
Molecular Beam Epitaxy
& 1000
& $[0.1,0.6]$
& $1\times10^{-12}$
& $2.5\times10^{-4}$
& 4000
\\
Grain Growth
& 200
& $[0.1,0.6]$
& $1\times10^{-12}$
& $2\times10^{-2}$
& 5000
\\
\hline
\end{tabular}
\end{table}

\section{Parameter Sensitivity Studies}
\label{sec:SM-parameter-sensitivity}

\subsection{Number of Candidate Features}
We investigate the influence of the number of candidate features $M$ on the accuracy of CARE-SAV. Specifically, we consider $M=$ 160, 320, 480, 640 and 960, while keeping all other numerical parameters fixed. The relative $L^2$ error is reported for the Allen-Cahn and Cahn-Hilliard equations to examine how the approximation accuracy changes as the candidate feature space is enlarged.

Fig.~\ref{fig:ac_feature} and~\ref{fig:ch_feature} show the relative $L^2$ errors obtained with different numbers of candidate features for the Allen-Cahn and Cahn-Hilliard equations respectively. In both cases, enlarging the candidate feature set generally improves the approximation accuracy, while the reduction in error becomes less significant once a sufficiently rich representation is reached. This indicates that increasing $M$ enhances the expressive capability of the CARE space, but also exhibits diminishing returns beyond a moderate feature size.

\begin{figure}[htbp]
  \centering
  \includegraphics[width=1\textwidth]{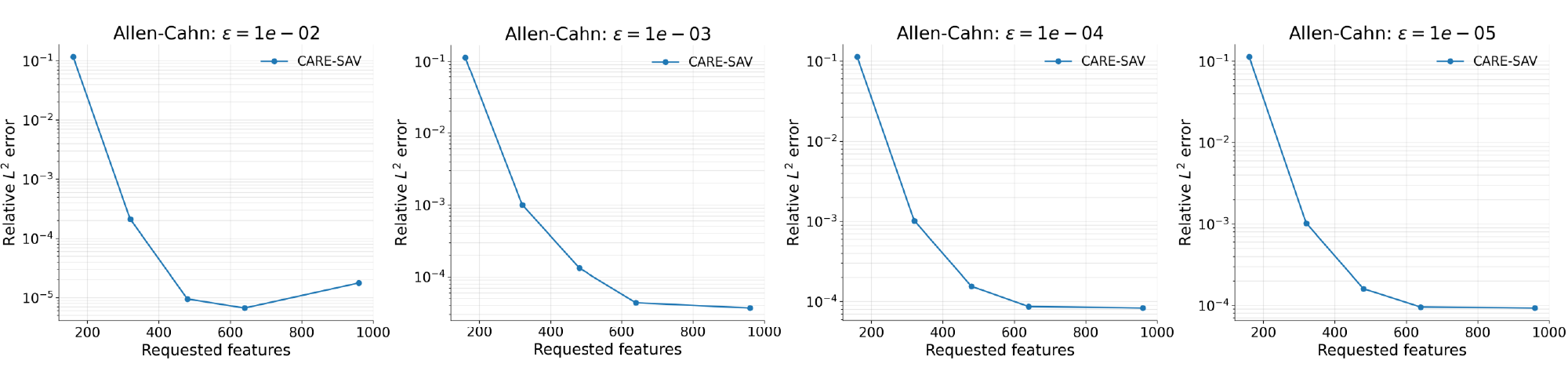}
  \caption{Relative $L^2$ error of CARE-SAV with different numbers of candidate features for the Allen-Cahn equation.}
  \label{fig:ac_feature}
\end{figure}

\begin{figure}[htbp]
  \centering
  \includegraphics[width=1\textwidth]{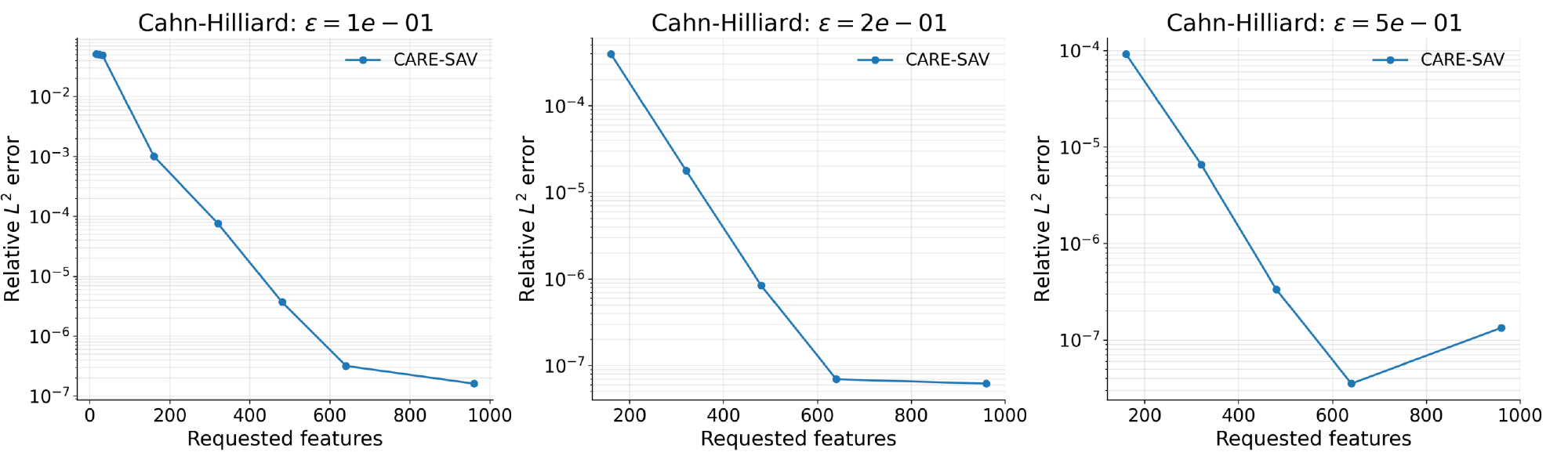}
  \caption{Relative $L^2$ error of CARE-SAV with different numbers of candidate features for the Cahn-Hilliard equation.}
  \label{fig:ch_feature}
\end{figure}

\subsection{QR Tolerance}
We examine the sensitivity of CARE-SAV to the rank-revealing QR tolerance $\tau_\text{QR}$. The number of candidate features, RBF width interval, random realization, and all discretization parameters are kept fixed, while $\tau_\text{QR}$ is varied from $10^{-8}$ to $10^{-14}$. We report the relative $L^2$ error together with the resulting effective dimension $K$ to assess the balance between feature reduction and approximation accuracy.

Fig.~\ref{fig:ac_tau} and~\ref{fig:ch_tau} show the relative $L^2$ errors obtained with different rank-revealing QR tolerances for the Allen-Cahn and Cahn-Hilliard equations respectively. The relative $L^2$ errors remain within the same order of magnitude over the tested range of QR tolerances. Although the Allen-Cahn error generally decreases as $\tau_\text{QR}$ is reduced, the Cahn-Hilliard results exhibit mild nonmonotonic variations. Nevertheless, the overall accuracy remains stable, indicating that CARE-SAV is not highly sensitive to the choice of $\tau_\text{QR}$ within the considered range and can eliminate numerically redundant feature directions without substantial loss of accuracy.

\begin{figure}[htbp]
  \centering
  \includegraphics[width=1\textwidth]{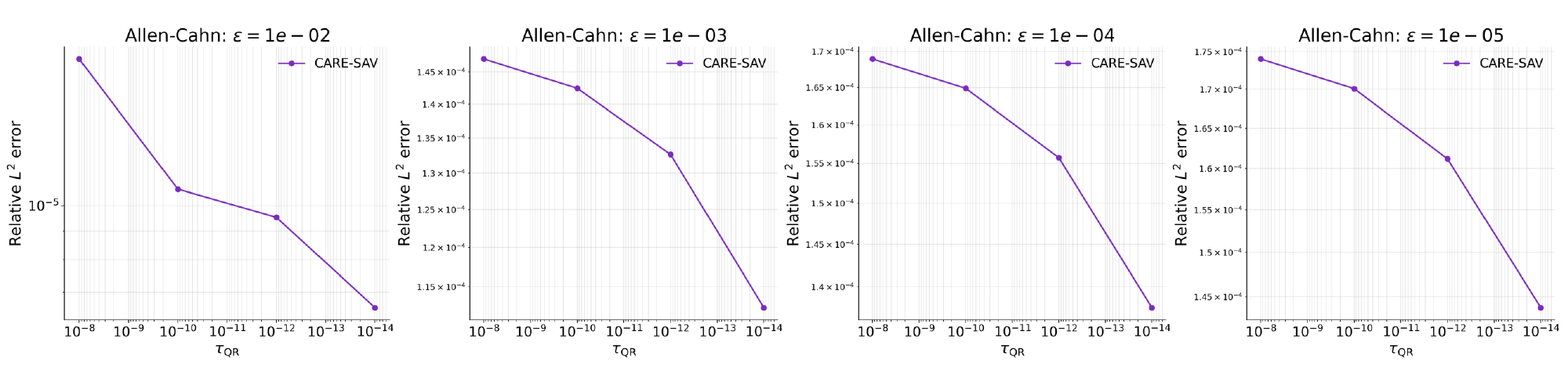}
  \caption{Relative $L^2$ error of CARE-SAV with different rank-revealing QR tolerances for the Allen-Cahn equation.}
  \label{fig:ac_tau}
\end{figure}

\begin{figure}[htbp]
  \centering
  \includegraphics[width=1\textwidth]{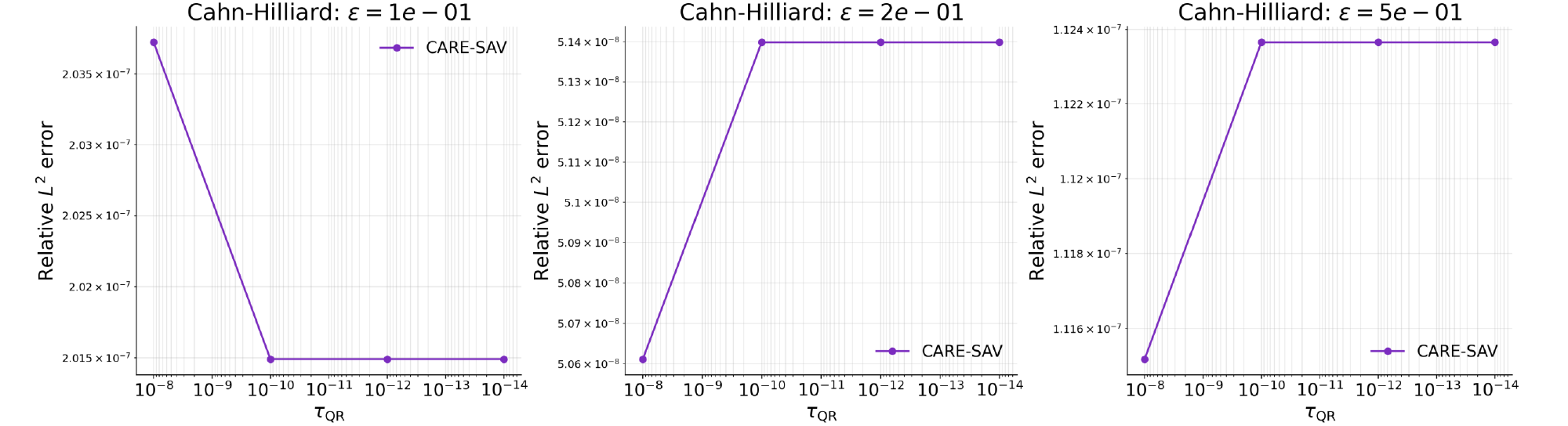}
  \caption{Relative $L^2$ error of CARE-SAV with different rank-revealing QR tolerances for the Cahn-Hilliard equation.}
  \label{fig:ch_tau}
\end{figure}

\section{Data Generator}
\label{sec:data-generator}

Reference solutions for the periodic benchmark problems are generated with a
Fourier pseudospectral discretization in space and the fourth-order
exponential time-differencing Runge-Kutta method (ETDRK4) in time
\cite{KassamTrefethen2005}. Each scalar equation, or each component of a
vector-valued system, is written in the semilinear form
\begin{equation}
  \partial_t u=\mathcal L u+\mathcal N(u),
  \label{eq:SM semilinear form}
\end{equation}
where the constant-coefficient linear operator $\mathcal L$ is treated
exactly and the nonlinear term $\mathcal N$ is evaluated pseudospectrally.
For a Fourier mode $\boldsymbol k$, let $\lambda_{\boldsymbol k}$ be the symbol of
$\mathcal L$, set $z_{\boldsymbol k}=\Delta t_{\mathrm{ref}}\lambda_{\boldsymbol k}$,
and define
\begin{equation}
  \begin{aligned}
    E_{\boldsymbol k}&=\exp(z_{\boldsymbol k}),
    &E_{2,\boldsymbol k}&=\exp(z_{\boldsymbol k}/2),\\
    Q_{\boldsymbol k}&=\frac{\Delta t_{\mathrm{ref}}}{2}
      \varphi_1(z_{\boldsymbol k}/2),\\
    f_{1,\boldsymbol k}&=\Delta t_{\mathrm{ref}}
      \left[\varphi_1(z_{\boldsymbol k})-3\varphi_2(z_{\boldsymbol k})
      +4\varphi_3(z_{\boldsymbol k})\right],
    &f_{2,\boldsymbol k}&=\Delta t_{\mathrm{ref}}
      \left[\varphi_2(z_{\boldsymbol k})-2\varphi_3(z_{\boldsymbol k})\right],\\
    f_{3,\boldsymbol k}&=\Delta t_{\mathrm{ref}}
      \left[4\varphi_3(z_{\boldsymbol k})-\varphi_2(z_{\boldsymbol k})\right],
  \end{aligned}
  \label{eq:SM ETDRK4 coefficients}
\end{equation}
with $\varphi_1(z)=(e^z-1)/z$,
$\varphi_2(z)=(e^z-1-z)/z^2$, and
$\varphi_3(z)=(e^z-1-z-z^2/2)/z^3$, extended continuously at $z=0$.

Let $\widehat{\mathcal N}(\widehat v)=
\mathcal F\mathcal N(\mathcal F^{-1}\widehat v)$, where $\mathcal F$
denotes the discrete Fourier transform. Starting from $\widehat u^n$, one
ETDRK4 step is
\begin{equation}
  \begin{aligned}
    \widehat N_n&=\widehat{\mathcal N}(\widehat u^n),\\
    \widehat a&=E_2\widehat u^n+Q\widehat N_n,
    &\widehat N_a&=\widehat{\mathcal N}(\widehat a),\\
    \widehat b&=E_2\widehat u^n+Q\widehat N_a,
    &\widehat N_b&=\widehat{\mathcal N}(\widehat b),\\
    \widehat c&=E_2\widehat a+Q(2\widehat N_b-\widehat N_n),
    &\widehat N_c&=\widehat{\mathcal N}(\widehat c),\\
    \widehat u^{n+1}&=E\widehat u^n+f_1\widehat N_n
    +2f_2(\widehat N_a+\widehat N_b)+f_3\widehat N_c.
  \end{aligned}
  \label{eq:SM ETDRK4 update}
\end{equation}
Here all products with $E$, $E_2$, $Q$, and $f_j$ are performed
mode by mode. The nonlinear terms are evaluated in physical space, and the
two-thirds rule is applied before transforming them back to Fourier space to
remove aliasing errors. The values of the coefficient functions near the
origin are evaluated by their analytic limits (or, equivalently, by the
standard contour-average implementation of ETDRK4) to avoid cancellation.

The Fourier grid and $\Delta t_{\mathrm{ref}}$ are refined until the recorded
snapshots are unchanged at the accuracy reported in the experiments. The
resulting reference fields are saved at the prescribed output times and
evaluated at the CARE collocation points by Fourier interpolation. These
snapshots provide the reference solutions used in the error measurements.

\vskip 0.1in

\noindent {\bf Data Availability} The datasets generated during and/or analysed during the current study are available from the corresponding author on request.
\vskip 0.1in
\noindent {\bf Conflict of interest} The authors declared that they have no conflict of interest.
\bibliographystyle{plain}
\nocite{*}
\bibliography{references}

@article{CALABRO2021,
title = {Extreme learning machine collocation for the numerical solution of elliptic PDEs with sharp gradients},
journal = {Computer Methods in Applied Mechanics and Engineering},
volume = {387},
pages = {114188},
year = {2021},
issn = {0045-7825},
doi = {10.1016/j.cma.2021.114188},
author = {F. Calabr{\`o} and G. Fabiani and C. Siettos},
}

@article{NASSAJIANMOJARRAD2023151,
title = {A new variable shape parameter strategy for RBF approximation using neural networks},
journal = {Computers \& Mathematics with Applications},
volume = {143},
pages = {151-168},
year = {2023},
issn = {0898-1221},
doi = {10.1016/j.camwa.2023.05.005},
author = {F. {Nassajian Mojarrad} and M. {Han Veiga} and J. S. Hesthaven and P. {\"O}ffner}
}

@misc{ren2025GFF,
      title={General Fourier Feature Physics-Informed Extreme Learning Machine (GFF-PIELM) for High-Frequency PDEs}, 
      author={F. Ren and S. Wang and P.-Z. Zhuang and H.-S. Yu and H. Yang},
      year={2025},
      eprint={2510.12293},
      archivePrefix={arXiv},
      primaryClass={cs.LG}, 
}

@article{FAN1997611,
title = {Computer simulation of grain growth using a continuum field model},
journal = {Acta Materialia},
volume = {45},
number = {2},
pages = {611-622},
year = {1997},
issn = {1359-6454},
doi = {10.1016/S1359-6454(96)00200-5},
author = {D. Fan and L.-Q. Chen},
}

@article{CrankNicolson1996,
  author  = {Crank, J. and Nicolson, P.},
  title   = {A Practical Method for Numerical Evaluation of Solutions
             of Partial Differential Equations of the Heat-Conduction Type},
  journal = {Advances in Computational Mathematics},
  volume  = {6},
  pages   = {207--226},
  year    = {1996},
  month   = dec,
  doi     = {10.1007/BF02127704}
}

@article{SHEN2018407,
title = {The scalar auxiliary variable (SAV) approach for gradient flows},
journal = {Journal of Computational Physics},
volume = {353},
pages = {407-416},
year = {2018},
issn = {0021-9991},
doi = {10.1016/j.jcp.2017.10.021},
author = {J. Shen and J. Xu and J. Yang},
}

@ARTICLE{PARKSANDBERG1991246,
  author={Park, J. and Sandberg, I. W.},
  journal={Neural Computation}, 
  title={Universal Approximation Using Radial-Basis-Function Networks}, 
  year={1991},
  volume={3},
  number={2},
  pages={246-257},
  doi={10.1162/neco.1991.3.2.246}}

@article{SHENXU20182895,
author = {Shen, J. and Xu, J.},
title = {Convergence and Error Analysis for the  Scalar Auxiliary Variable (SAV) Schemes to Gradient Flows},
journal = {SIAM Journal on Numerical Analysis},
volume = {56},
number = {5},
pages = {2895-2912},
year = {2018},
doi = {10.1137/17M1159968}
}

@article{Chen2020SAVFEM,
  author  = {Chen, H. and Mao, J. and Shen, J.},
  title   = {Optimal error estimates for the scalar auxiliary variable finite-element schemes for gradient flows},
  journal = {Numerische Mathematik},
  volume  = {145},
  pages   = {167--196},
  year    = {2020},
  doi     = {10.1007/s00211-020-01112-4},
}

@article{KassamTrefethen2005,
  author  = {Kassam, A.-K. and Trefethen, L. N.},
  title   = {Fourth-order time-stepping for stiff {PDEs}},
  journal = {SIAM Journal on Scientific Computing},
  volume  = {26},
  number  = {4},
  pages   = {1214--1233},
  year    = {2005},
  doi     = {10.1137/S1064827502410633},
}

@article{SHENDecoupled2014,
author = {Shen, J. and Yang, X.},
title = {Decoupled Energy Stable Schemes for Phase-Field Models of Two-Phase Complex Fluids},
journal = {SIAM Journal on Scientific Computing},
volume = {36},
number = {1},
pages = {B122-B145},
year = {2014},
doi = {10.1137/130921593},
}

@article{SHEN2015617,
title = {Efficient energy stable numerical schemes for a phase field moving contact line model},
journal = {Journal of Computational Physics},
volume = {284},
pages = {617-630},
year = {2015},
issn = {0021-9991},
doi = {10.1016/j.jcp.2014.12.046},
author = {J. Shen and X. Yang and H. Yu},
}

@article{YANG20171116,
title = {Linearly first- and second-order, unconditionally energy stable schemes for the phase field crystal model},
journal = {Journal of Computational Physics},
volume = {330},
pages = {1116-1134},
year = {2017},
issn = {0021-9991},
doi = {10.1016/j.jcp.2016.10.020},
author = {X. Yang and D. Han}
}

@article{Shen-Phase-Field2010,
author = {Shen, J. and Yang, X.},
title = {A Phase-Field Model and Its Numerical Approximation for Two-Phase Incompressible Flows with Different Densities and Viscosities},
journal = {SIAM Journal on Scientific Computing},
volume = {32},
number = {3},
pages = {1159-1179},
year = {2010},
doi = {10.1137/09075860X}
}

@article{GOMEZ20115310,
title = {Provably unconditionally stable, second-order time-accurate, mixed variational methods for phase-field models},
journal = {Journal of Computational Physics},
volume = {230},
number = {13},
pages = {5310-5327},
year = {2011},
issn = {0021-9991},
doi = {10.1016/j.jcp.2011.03.033},
author = {H. Gomez and T. J. R. Hughes}
}

@article{WiseFDM2009,
author = {Wise, S. M. and Wang, C. and Lowengrub, J. S.},
title = {An Energy-Stable and Convergent Finite-Difference Scheme for the Phase Field Crystal Equation},
journal = {SIAM Journal on Numerical Analysis},
volume = {47},
number = {3},
pages = {2269-2288},
year = {2009},
doi = {10.1137/080738143}
}

@article{GaoFEM2018,
author = {Gao, Y. and He, X. and Mei, L. and Yang, X.},
title = {Decoupled, Linear, and Energy Stable Finite Element Method for the Cahn--Hilliard--Navier--Stokes--Darcy Phase Field Model},
journal = {SIAM Journal on Scientific Computing},
volume = {40},
number = {1},
pages = {B110-B137},
year = {2018},
doi = {10.1137/16M1100885}
}

@article{LiShen2020SAV,
  author  = {Li, X. and Shen, J.},
  title   = {Stability and error estimates of the {SAV} Fourier-spectral method for the phase field crystal equation},
  journal = {Advances in Computational Mathematics},
  volume  = {46},
  number  = {3},
  pages   = {48},
  year    = {2020},
  doi     = {10.1007/s10444-020-09789-9}
}

@article{Eyre_1998, 
  title={Unconditionally Gradient Stable Time Marching the Cahn-Hilliard Equation}, 
  volume={529}, 
  doi={10.1557/PROC-529-39},
  journal={MRS Proceedings}, 
  author={Eyre, D. J.}, 
  year={1998}, 
  pages={39}
}

@article{BaskaranConvex2013,
author = {Baskaran, A. and Lowengrub, J. S. and Wang, C. and Wise, S. M.},
title = {Convergence Analysis of a Second Order Convex Splitting Scheme for the Modified Phase Field Crystal Equation},
journal = {SIAM Journal on Numerical Analysis},
volume = {51},
number = {5},
pages = {2851-2873},
year = {2013},
doi = {10.1137/120880677}
}

@article{Yang2009AllenCahn,
  author  = {Yang, X.},
  title   = {Error analysis of stabilized semi-implicit method of {Allen--Cahn} equation},
  journal = {Discrete and Continuous Dynamical Systems - B},
  volume  = {11},
  number  = {4},
  pages   = {1057--1070},
  year    = {2009},
  month   = {June},
  doi     = {10.3934/dcdsb.2009.11.1057},
}

@article{LiSemiImplicitFourierSpectral,
  author = {Li, D. and Qiao, Z. and Tang, T.},
  title = {Characterizing the Stabilization Size for Semi-Implicit Fourier-Spectral Method to Phase Field Equations},
  journal = {SIAM Journal on Numerical Analysis},
  volume = {54},
  number = {3},
  pages = {1653-1681},
  year = {2016},
  doi = {10.1137/140993193}
}

@article{WangYu2018CahnHilliard,
  author  = {Wang, L. and Yu, H.},
  title   = {On Efficient Second Order Stabilized Semi-implicit Schemes for the {Cahn--Hilliard} Phase-Field Equation},
  journal = {Journal of Scientific Computing},
  volume  = {77},
  pages   = {1185--1209},
  year    = {2018},
  doi     = {10.1007/s10915-018-0746-2}
}

@article{YangIEQ2017MMAMAS,
  author = {Yang, X. and Zhao, J. and Wang, Q. and Shen, J.},
  title = {Numerical approximations for a three-component Cahn-Hilliard phase-field model based on the invariant energy quadratization method},
  journal = {Mathematical Models and Methods in Applied Sciences},
  volume = {27},
  number = {11},
  pages = {1993-2030},
  year = {2017},
  doi = {10.1142/S0218202517500373}
}

@article{YANG2017104JCP,
title = {Numerical approximations for the molecular beam epitaxial growth model based on the invariant energy quadratization method},
journal = {Journal of Computational Physics},
volume = {333},
pages = {104-127},
year = {2017},
issn = {0021-9991},
doi = {10.1016/j.jcp.2016.12.025},
author = {X. Yang and J. Zhao and Q. Wang}
}

@article{YangIEQ2018SISC,
author = {Yang, X. and Yu, H.},
title = {Efficient Second Order Unconditionally Stable Schemes for a Phase Field Moving Contact Line Model Using an Invariant Energy Quadratization Approach},
journal = {SIAM Journal on Scientific Computing},
volume = {40},
number = {3},
pages = {B889-B914},
year = {2018},
doi = {10.1137/17M1125005}
}

@article{ZhaoIEQ2017IJNME,
author = {Zhao, J. and Wang, Q. and Yang, X.},
title = {Numerical approximations for a phase field dendritic crystal growth model based on the invariant energy quadratization approach},
journal = {International Journal for Numerical Methods in Engineering},
volume = {110},
number = {3},
pages = {279-300},
doi = {10.1002/nme.5372},
year = {2017}
}

@article{ChengMSAV2018SISC,
  author = {Cheng, Q. and Shen, J.},
  title = {Multiple Scalar Auxiliary Variable (MSAV) Approach and its Application to the Phase-Field Vesicle Membrane Model},
  journal = {SIAM Journal on Scientific Computing},
  volume = {40},
  number = {6},
  pages = {A3982-A4006},
  year = {2018},
  doi = {10.1137/18M1166961}
}

@article{HOU2019307,
  title = {A variant of scalar auxiliary variable approaches for gradient flows},
  journal = {Journal of Computational Physics},
  volume = {395},
  pages = {307-332},
  year = {2019},
  issn = {0021-9991},
  doi = {10.1016/j.jcp.2019.05.037},
  author = {D. Hou and M. Azaiez and C. Xu}
}

@article{HuangSAV2020SISC,
author = {Huang, F. and Shen, J. and Yang, Z.},
title = {A Highly Efficient and Accurate New Scalar Auxiliary Variable Approach for Gradient Flows},
journal = {SIAM Journal on Scientific Computing},
volume = {42},
number = {4},
pages = {A2514-A2536},
year = {2020},
doi = {10.1137/19M1298627}
}

@article{GongSAV2018SISC,
author = {Gong, Y. and Zhao, J. and Wang, Q.},
title = {Second Order Fully Discrete Energy Stable Methods on Staggered Grids for Hydrodynamic Phase Field Models of Binary Viscous Fluids},
journal = {SIAM Journal on Scientific Computing},
volume = {40},
number = {2},
pages = {B528-B553},
year = {2018},
doi = {10.1137/17M1135451}
}

@article{HanBrylevYang2017,
  author  = {Han, D. and Brylev, A. and Yang, X. and Tan, Z.},
  title   = {Numerical Analysis of Second Order, Fully Discrete Energy Stable Schemes for Phase Field Models of Two-Phase Incompressible Flows},
  journal = {Journal of Scientific Computing},
  volume  = {70},
  pages   = {965--989},
  year    = {2017},
  doi     = {10.1007/s10915-016-0279-5}
}

@article{chen2022JML,
  author = {Chen, J. and Chi, X. and E, W. and Yang, Z.},
  title = {Bridging Traditional and Machine Learning-Based Algorithms for Solving PDEs: The Random Feature Method},
  journal = {Journal of Machine Learning},
  year = {2022},
  volume = {1},
  number = {3},
  pages = {268-298},
  doi = {10.4208/jml.220726}
}

@article{SIRIGNANO20181339,
title = {DGM: A deep learning algorithm for solving partial differential equations},
journal = {Journal of Computational Physics},
volume = {375},
pages = {1339-1364},
year = {2018},
issn = {0021-9991},
doi = {10.1016/j.jcp.2018.08.029},
author = {J. Sirignano and K. Spiliopoulos}
}

@article{Belytschko1994IJNME,
author = {Belytschko, T. and Lu, Y. Y. and Gu, L.},
title = {Element-free Galerkin methods},
journal = {International Journal for Numerical Methods in Engineering},
volume = {37},
number = {2},
pages = {229-256},
doi = {10.1002/nme.1620370205},
year = {1994}
}

@article{chen2023,
  author = {Chen, J.-R. and E, W. and Luo, Y.-X.},
  title = {The Random Feature Method for Time-Dependent Problems},
  journal = {East Asian Journal on Applied Mathematics},
  year = {2023},
  volume = {13},
  number = {3},
  pages = {435-463},
  doi = {10.4208/eajam.2023-065.050423}
}

@article{CHI2024116719,
title = {The random feature method for solving interface problems},
journal = {Computer Methods in Applied Mechanics and Engineering},
volume = {420},
pages = {116719},
year = {2024},
issn = {0045-7825},
doi = {10.1016/j.cma.2023.116719},
author = {X. Chi and J. Chen and Z. Yang}
}

@article{SHANG2023107518,
title = {Randomized neural network with Petrov-Galerkin methods for solving linear and nonlinear partial differential equations},
journal = {Communications in Nonlinear Science and Numerical Simulation},
volume = {127},
pages = {107518},
year = {2023},
issn = {1007-5704},
doi = {10.1016/j.cnsns.2023.107518},
author = {Y. Shang and F. Wang and J. Sun}
}

@article{SUN2024115830,
title = {Local randomized neural networks with discontinuous Galerkin methods for partial differential equations},
journal = {Journal of Computational and Applied Mathematics},
volume = {445},
pages = {115830},
year = {2024},
issn = {0377-0427},
doi = {10.1016/j.cam.2024.115830},
author = {J. Sun and S. Dong and F. Wang}
}

@article{DWIVEDI202096,
title = {Physics Informed Extreme Learning Machine (PIELM)-A rapid method for the numerical solution of partial differential equations},
journal = {Neurocomputing},
volume = {391},
pages = {96-118},
year = {2020},
issn = {0925-2312},
doi = {10.1016/j.neucom.2019.12.099},
author = {V. Dwivedi and B. Srinivasan}
}

@article{DONG2021114129,
title = {Local extreme learning machines and domain decomposition for solving linear and nonlinear partial differential equations},
journal = {Computer Methods in Applied Mechanics and Engineering},
volume = {387},
pages = {114129},
year = {2021},
issn = {0045-7825},
doi = {10.1016/j.cma.2021.114129},
author = {S. Dong and Z. Li}
}

@article{QUAN2023697,
title = {Solving partial differential equation based on extreme learning machine},
journal = {Mathematics and Computers in Simulation},
volume = {205},
pages = {697-708},
year = {2023},
issn = {0378-4754},
doi = {10.1016/j.matcom.2022.10.018},
author = {H. D. Quan and H. T. Huynh}
}

@article{RAISSI2019686,
title = {Physics-informed neural networks: A deep learning framework for solving forward and inverse problems involving nonlinear partial differential equations},
journal = {Journal of Computational Physics},
volume = {378},
pages = {686-707},
year = {2019},
issn = {0021-9991},
doi = {10.1016/j.jcp.2018.10.045},
author = {M. Raissi and P. Perdikaris and G. E. Karniadakis}
}

@article{EYu2018DeepRitz,
  author  = {E, W. and Yu, B.},
  title   = {The Deep Ritz Method: A Deep Learning-Based Numerical Algorithm for Solving Variational Problems},
  journal = {Communications in Mathematics and Statistics},
  volume  = {6},
  pages   = {1--12},
  year    = {2018},
  doi     = {10.1007/s40304-018-0127-z}
}

@article{KHARAZMI2021113547,
title = {hp-VPINNs: Variational physics-informed neural networks with domain decomposition},
journal = {Computer Methods in Applied Mechanics and Engineering},
volume = {374},
pages = {113547},
year = {2021},
issn = {0045-7825},
doi = {10.1016/j.cma.2020.113547},
author = {E. Kharazmi and Z. Zhang and G. E. M. Karniadakis}
}

@misc{tang2026energy,
      title={Energy Dissipation Preserving Feature-based DNN Galerkin Methods for Gradient Flows}, 
      author={T. Tang and J. Yang and Y. Zhao and Q. Zhu},
      year={2026},
      eprint={2603.14029},
      archivePrefix={arXiv},
      primaryClass={math.NA},
}

@misc{wang2026pinns,
      title={When PINNs Go Wrong: Pseudo-Time Stepping Against Spurious Solutions}, 
      author={S. Wang and S. Koohy and Y. Lu and P. Perdikaris},
      year={2026},
      eprint={2604.23528},
      archivePrefix={arXiv},
      primaryClass={cs.LG},
}

@article{Celledoni2018SISC,
author = {Celledoni, E. and Eidnes, S. and Owren, B. and Ringholm, T.},
title = {Dissipative Numerical Schemes on Riemannian Manifolds with Applications to Gradient Flows},
journal = {SIAM Journal on Scientific Computing},
volume = {40},
number = {6},
pages = {A3789-A3806},
year = {2018},
doi = {10.1137/18M1190628}
}

@article{Tang2019SISC,
author = {Tang, T. and Yu, H. and Zhou, T.},
title = {On Energy Dissipation Theory and Numerical Stability for Time-Fractional Phase-Field Equations},
journal = {SIAM Journal on Scientific Computing},
volume = {41},
number = {6},
pages = {A3757-A3778},
year = {2019},
doi = {10.1137/18M1203560}
}

@article{Li2024SISC,
author = {Li, X. and Qiao, Z.},
title = {A Second-Order, Linear, \(\boldsymbol{L^\infty}\)-Convergent, and Energy Stable Scheme for the Phase Field Crystal Equation},
journal = {SIAM Journal on Scientific Computing},
volume = {46},
number = {1},
pages = {A429-A451},
year = {2024},
doi = {10.1137/23M1552164}
}
\end{document}